\newcommand{\C}{\ensuremath{\mathbb{C}}}
\newcommand{\R}{\ensuremath{\mathbb{R}}}
\newcommand{\N}{\ensuremath{\mathbb{N}}}
\newcommand{\cG}{\ensuremath{\mathcal{G}}}
\newcommand{\cM}{\ensuremath{\mathcal{M}}}
\newcommand{\cO}{\ensuremath{\mathcal{O}}}
\newcommand{\acts}{\ensuremath{\curvearrowright}}
\renewcommand\epsilon\varepsilon
\theoremstyle{definition}
\newtheorem{thmA}{Theorem}
\newtheorem{corA}[thmA]{Corollary}
\newtheorem{thm}{Theorem}[section]
\newtheorem{dfn}[thm]{Definition}
\newtheorem{lem}[thm]{Lemma}
\newtheorem{prp}[thm]{Proposition}
\newtheorem{cor}[thm]{Corollary}
\newtheorem{rem}[thm]{Remark}
\newtheorem{qst}[thm]{Question}
\author[de Laat]{Tim de Laat}
\thanks{TdL is supported by the Deutsche Forschungsgemeinschaft under Germany's Excellence Strategy -- EXC 2044 -- 390685587, Mathematics M\"unster: Dynamics -- Geometry -- Structure and through SFB 878}
\address{Tim de Laat \newline Mathematisches Institut, Westf\"alische Wilhelms-Universit\"at M\"unster \newline Einsteinstrasse 62, 48149, M\"unster, Germany}
\email{tim.delaat@uni-muenster.de}
\author[de la Salle]{Mikael de la Salle}
\thanks{MdlS is supported by the CNRS and by the LABEX MILYON (ANR-10-LABX-0070) of Universit\'e de Lyon, within the program ``Investissements d'Avenir'' (ANR-11-IDEX-0007) operated by the French National Research Agency (ANR). His research is also supported by the ANR projects GAMME (ANR-14-CE25-0004) and AGIRA (ANR-16-CE40-0022).}
\address{Mikael de la Salle \newline UMPA, CNRS--ENS de Lyon \newline 69364 Lyon cedex 7, France}
\email{mikael.de.la.salle@ens-lyon.fr}
\title{Banach space actions and $L^2$-spectral gap}
\begin{document}

\begin{abstract}
  \.{Z}uk proved that if a finitely generated group admits a Cayley graph such that the Laplacian on the links of this Cayley graph has a spectral gap $> \frac{1}{2}$, then the group has property (T), or equivalently, every affine isometric action of the group on a Hilbert space has a fixed point. We prove that the same holds for affine isometric actions of the group on a uniformly curved Banach space (for example an $L^p$-space with $1 < p < \infty$ or an interpolation space between a Hilbert space and an arbitrary Banach space) as soon as the Laplacian on the links has a two-sided spectral gap $>1-\varepsilon$.
  
  This criterion applies to random groups in the triangular density model for densities $> \frac{1}{3}$. In this way, we are able to generalize recent results of Dru\c{t}u and Mackay to affine isometric actions of random groups on uniformly curved Banach spaces. Also, in the setting of actions on $L^p$-spaces, our results are quantitatively stronger, even in the case $p=2$. This naturally leads to new estimates on the conformal dimension of the boundary of random groups in the triangular model.

  Additionally, we obtain results on the eigenvalues of the $p$-Laplacian on graphs, and on the spectrum and degree distribution of Erd\H{o}s-R\'enyi graphs.
\end{abstract}

\maketitle

\section{Introduction and main results}

\subsection{Introduction}
Fixed point properties for group actions on metric spaces, e.g.~Banach spaces or non-positively curved spaces, are natural rigidity properties that contribute to the understanding of both groups and the spaces on which they act. When considering actions on Banach spaces, the natural actions to consider are affine isometric actions. Given a Banach space $X$, a topological group is said to have property (F$_X$) if every continuous affine isometric action of the group on $X$ has a fixed point. In this article, we deal with fixed point properties for countable discrete groups. In this setting, every affine isometric action is automatically continuous.

Property (F$_X$) was introduced by Bader, Furman, Gelander and Monod \cite{MR2316269} as a Banach space version of Serre's property (FH). A topological group has property (FH) if every continuous affine isometric action of the group on a Hilbert space has a fixed point. It is well known that a countable group has property (FH) if and only if it has property (T), which is a rigidity property for groups that was introduced by Kazhdan \cite{MR0209390}. A group has property (T) if its trivial representation is isolated in the unitary dual of the group equipped with the Fell topology. Both property (T) and property (FH) have lead to striking results in several areas of mathematics, e.g.~group theory, combinatorics, ergodic theory, dynamical systems, measure theory and operator algebras. We refer to \cite{MR2415834} for a detailed account of property (T) and property (FH).

Partly because of the aforementioned connections to different areas of mathematics, recent years have seen a growing interest in Banach space versions of both fixed point properties and property (T). Alongside property (F$_X$), as recalled above, Bader, Furman, Gelander and Monod also defined a Banach space version of property (T), which is called property (T$_X$) and is in general weaker than property (F$_X$) (see \cite[Theorem 1.3]{MR2316269}). Another notable Banach space strengthening of property (T) is strong property (T), which Lafforgue introduced in his work on the Baum-Connes conjecture \cite{MR2423763,MR2574023}. He essentially proved that if a group has strong property (T) relative to a Banach space $X \oplus \mathbb{C}$, then the group has property (F$_X$).

The most straightforward non-Hilbertian Banach spaces to consider are $L^p$-spaces, with $p \neq 2$. For $1 \leq p < \infty$, a countable group is said to have property (F$L^p$) if every affine isometric action of the group on an $L^p$-space has a fixed point. It is known that property (T) implies property (F$L^p$) for $p \in [1,2+\varepsilon)$, where $\varepsilon$ may depend on the group (see \cite[Theorem 1.3]{MR2316269} (and also \cite{drutukapovich}) for the case $p \in (1,2+\varepsilon)$ and \cite[Corollary D]{MR2929085} for $p=1$). In several cases, there are explicit lower bounds on $\varepsilon$ (see \cite{MR2978328,MR3606450,drutumackay}). On the other hand, there are groups with property (T) that are known to fail property (F$L^p$) for large $p$ \cite{pansu1995,MR1979183,MR2221161,MR2421319}, e.g.~cocompact lattices in $\mathrm{Sp}(n,1)$. However, (lattices in) connected simple higher-rank Lie groups and (lattices in) connected simple higher-rank algebraic groups over non-Archimedean local fields have property (F$L^p$) for all $p \in [1,\infty)$ (see \cite[Theorem B]{MR2316269} and \cite[Corollary D]{MR2929085}). Similar results have been established for universal lattices \cite{MR2794627}.

Bader, Furman, Gelander and Monod conjectured that (lattices in) connected simple higher-rank Lie groups and (lattices in) connected simple higher-rank algebraic groups over non-Archimedean local fields have property (F$_X$) for every superreflexive Banach space $X$ \cite[Conjecture 1.6]{MR2316269}. This conjecture has been proved in the non-Archimedean setting \cite{MR2423763,MR3190138}, and in the real and complex case, partial results have been obtained \cite{MR3474958,MR3407190,MR3533271}. Other results that show fixed point properties by means of an appropriate strengthening of property (T) were obtained by Oppenheim \cite{MR3606450}. His examples include certain groups acting on buildings and Kac-Moody-Steinberg groups.

Another effective way of establishing fixed point properties or property (T) for a group is by means of spectral conditions on the links of vertices of certain simplicial complexes on which the group acts. The idea of this method goes back to \cite{MR0320180} and was further developed in \cite{MR1651383,MR1408975,MR1995802,MR1465598} in order to provide criteria to establish property (T). Nowadays, the most well-known spectral criterion for property (T) may be the one due to \.{Z}uk \cite{MR1995802}, asserting that if $\Gamma$ is a finitely generated group with finite symmetric generating set $S$ (with $1 \notin S$) such that the link graph $L(S)$ associated with $S$ is connected and the smallest non-zero eigenvalue of the Laplacian on $L(S)$ is strictly larger than $\frac{1}{2}$, then $\Gamma$ has property (T).

In recent years, certain local criteria for fixed point properties for group actions on Banach spaces have been established, by Bourdon \cite{MR2978328} for actions on $L^p$-spaces, and by Nowak \cite{MR3323202} and by Oppenheim \cite{MR3278887} for actions on reflexive spaces. Oppenheim also explains that the assumption of reflexivity is not needed in his approach, and his proof is elementary.

The use of spectral criteria is particularly beneficial when considering random groups. The framework of random groups provides ways to consider finitely presented groups in which the relators are chosen at random according to some prescribed probability measure on the set of all possible words in the generating set. The theory of random groups goes back to \cite{MR1253544}, in which Gromov introduced what is now called the Gromov density model $\cG(n,l,d)$ (see also \cite{MR1978492}), in which the density $d$ is a parameter that controls the number of relators. It was proved by Gromov that for $d < \frac{1}{2}$, a random group in $\cG(n,l,d)$ is infinite and hyperbolic with overwhelming probability (w.o.p.), whereas for $d \geq \frac{1}{2}$, a group in $\cG(n,l,d)$ is trivial or $\mathbb{Z}_2$ w.o.p.~\cite{MR1253544} (see also \cite{MR2205306}). The study of property (T) for random groups was initiated by \.{Z}uk \cite{MR1995802}. By using his aforementioned criterion, he proved that for $d > \frac{1}{3}$, a random group in the triangular density model $\cM(m,d)$, which is an adaptation of the Gromov density model that is particularly suitable for the use of the spectral criterion, has property (T) w.o.p. The fact that for $d > \frac{1}{3}$, a group in the Gromov density model $\cG(n,l,d)$ has property (T) w.o.p., was proved in detail in \cite{MR3106728}.

Fixed point properties for actions of random groups on $L^p$-spaces were first considered by Nowak, by applying his spectral criterion mentioned above \cite[Section 6]{MR3323202}. Moreover, in a recent article, Dru\c{t}u and Mackay made substantial contributions to the understanding of property (F$L^p$) in the setting of random groups \cite{drutumackay}. The main part of their argument consists of establishing new bounds on the first positive eigenvalue of the $p$-Laplacian on random graphs. By applying Bourdon's criterion, they obtain fixed point properties of actions of random groups on $L^p$-spaces. In fact, Dru\c{t}u and Mackay do not restrict to actions on $L^p$-spaces, but consider the more general setting of actions on Banach spaces whose finite-dimensional subspaces are $\alpha$-isomorphic to a subspace of an $L^p$-space, with $\alpha< 2^{\frac{1}{2p}} < 1.19$. Additionally, their results lead to quantitative results on the conformal dimension of the boundary of random groups. We will elaborate more on their results below.

\subsection{Statement of the main results} \label{subsec:mainresults}
The aim of this article is two-fold. First we establish a criterion for groups that ensures that every affine isometric action of the group on a given uniformly curved Banach space has a fixed point. Uniform curvedness is a property introduced by Pisier (see Section \ref{subsection:unif_curved}). Examples of uniformly curved spaces are $L^p$-spaces with $1 < p < \infty$ and interpolation spaces between a Hilbert space and an arbitrary Banach space, i.e.~strictly $\theta$-Hilbertian spaces. Uniform curvedness is stable under renorming and finite representability.

Second, we apply our spectral criterion to random groups in the triangular density model, giving the first results on fixed point properties for actions of random groups on Banach spaces that are very different from $L^p$-spaces. Also, in the setting of $L^p$-spaces (even in the case $p=2$), our results strengthen the known results on fixed point properties of random groups in the triangular density model. We also establish new quantitative results on the conformal dimension of the boundary of random groups.

We only consider complex Banach spaces, but is is straightforward to formulate our results in the setting of real Banach spaces.

In what follows, if $\mathcal{G}=(V,\omega)$ is a connected finite (weighted) graph, we denote by $A_{\mathcal{G}}$ the Markov operator of the random walk on $\mathcal{G}$ (see Section \ref{sec:graphs} for the definition).

\begin{thmA} \label{thm:criterionuc}
Let $X$ be a uniformly curved Banach space. Then there exists an $\varepsilon(X) > 0$ such that the following holds: If $\Gamma$ is a group that admits a properly discontinuous cocompact action by simplicial automorphisms on a locally finite simplicial $2$-complex $M$ such that for all its links $L$, we have $\|A_L\|_{B(L^2_0(L,\nu))}<\varepsilon(X)$, then $\Gamma$ has property (F$_X$).
\end{thmA}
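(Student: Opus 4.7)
The plan is to combine a local-to-global spectral criterion for fixed points of affine isometric actions on Banach spaces -- in the spirit of Garland, \.{Z}uk, Oppenheim and Nowak -- with a transfer principle provided by Pisier's uniform curvedness. I would first reformulate the problem cohomologically: any affine isometric action $\alpha$ of $\Gamma$ on $X$ has the form $\alpha(g)x = \pi(g)x + b(g)$ with $\pi$ a linear isometric representation and $b$ a 1-cocycle, and since uniform curvedness implies superreflexivity, the action has a fixed point iff $b$ is bounded. Using the properly discontinuous cocompact action on the locally finite 2-complex $M$, boundedness of $b$ reduces, by a standard Garland-type argument, to a local Poincar\'e-type inequality at every link $L$ of $M$.

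Next I would formulate the relevant local inequality for $X$-valued functions. Around each vertex $v$, the cocycle $b$ assembles into a function $f_v: L_v \to X$ whose zero-average energy controls the contribution of $b$ on the star of $v$. Following the Banach-valued version of the link-of-link computation, one obtains an estimate of the form ``global energy of $b \lesssim \kappa \cdot$ global energy of $b$'', where $\kappa$ is essentially the operator norm of $A_L$ acting, twisted by the stabiliser representation, on $X$-valued zero-average functions on $L$. If this twisted operator norm is strictly below a universal threshold $\delta_0 < 1$, the inequality forces $b$ to be a coboundary.

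The crucial transfer step is then to pass from the scalar hypothesis $\|A_L\|_{B(L^2_0(L,\nu))} < \varepsilon(X)$ to the required Banach-valued bound. By Pisier's definition of uniform curvedness, for every $\delta > 0$ there exists $\varepsilon_0(X,\delta) > 0$ such that any operator $T$ on a Hilbert space $H$ with $\|T\|_{B(H)} \leq \varepsilon_0$ satisfies $\|T \otimes \mathrm{id}_X\|_{B(H \otimes_2 X)} \leq \delta$. Since the stabiliser representation acts by isometries of $X$, the twisted $L^2(L,\nu;X)$ is isometrically identified with the untwisted space, and the twisted operator norm coincides with $\|A_L \otimes \mathrm{id}_X\|_{B(L^2_0(L,\nu;X))}$. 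Setting $\varepsilon(X) := \varepsilon_0(X,\delta_0)$ therefore ensures that the local spectral condition from step two is satisfied at every link.

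Combining these three steps yields a fixed point for every affine isometric action of $\Gamma$ on $X$, hence property (F$_X$). The main obstacle I expect is the Banach-valued local-to-global step: the classical \.{Z}uk/Garland argument relies on orthogonality identities for scalar-valued functions, and several of these are unavailable for $X$-valued functions. They must be replaced by norm estimates phrased intrinsically in terms of $\|A_L \otimes \mathrm{id}_X\|$, which is precisely the quantity accessible through uniform curvedness; arranging the book-keeping so that the final spectral quantity matches the one controlled by the transfer principle is the crux of the argument.
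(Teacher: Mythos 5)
Your transfer step is close in spirit to the paper's: you use uniform curvedness to pass from the scalar hypothesis $\|A_L\|_{B(L^2_0(L,\nu))}<\varepsilon(X)$ to a bound on $\|A_L\otimes\mathrm{id}_X\|$. Two technical remarks there. First, Pisier's $\Delta_X$ quantifies operators that are simultaneously contractions on $L^1$ and $L^\infty$ and small on $L^2$; your formulation drops the $L^1$--$L^\infty$ contraction requirement, without which the implication fails. In the paper one applies $\Delta_X$ to $T=\tfrac12(A_{\mathcal G}-P)$, with $P$ the projection onto constants, precisely because this $T$ is a contraction on $L^1$ and $L^\infty$. Second, the twisting by the stabiliser representation is unnecessary once one works with $\Gamma$-equivariant maps $\psi\colon M_0\to X$ directly.

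The genuine gap is in the local-to-global step. You correctly identify that the orthogonality identities underlying the Garland--\.Zuk argument break for $X$-valued functions, but you do not supply the replacement, and ``book-keeping'' will not produce it. Concretely: from $\|A_L\otimes\mathrm{id}_X\|_{B(L^2_0(L,\nu;X))}\leq\varepsilon$ the naive estimate is
\[
\|f\|_p\leq\|(1-A_L)^{-1}\|\,\|f-A_Lf\|_p\leq\frac{1}{1-\varepsilon}\,\|\nabla f\|_p,
\]
which gives $\pi_{p,L}(X)\leq\frac{1}{1-\varepsilon}>1$ for every $\varepsilon>0$. But the whole argument (whether you phrase it cohomologically \`a la Bourdon--Nowak, or as the Oppenheim-style iteration the paper uses) needs the Poincar\'e constant to be \emph{strictly below} $1$; the crossing of that threshold is exactly what is nontrivial. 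The paper's resolution is a two-step input you do not mention: (i) since uniform curvedness implies superreflexivity, Pisier's renorming theorem yields an equivalent $q$-uniformly convex norm on $X$ (preserving isometries), and (ii) $q$-uniform convexity, applied to the random variable $f(Z_0)-f(Z_1)$ conditioned on $Z_0$, gives the quantitative improvement
\[
\|f-A_Lf\|_q^q\leq\|\nabla f\|_q^q-C\,\mathbb E\|f(Z_1)-A_Lf(Z_0)\|^q,
\]
which, combined with the $\varepsilon$-bound on $A_L$, yields $\pi_{q,L}(X)\leq(1+C)^{-1/q}(1-\varepsilon)^{-1}<1$ for $\varepsilon$ small. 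Without the renorming and the uniform-convexity inequality, your local estimate never beats the triangle inequality. Once $\pi_{q,L}(X)<1$ is in hand, the paper does not run a Garland-type cohomological spectral computation; it runs a contraction iteration on the complete metric space of $\Gamma$-equivariant maps $\psi\colon M_0\to X$, producing a Cauchy sequence converging to an equivariant map with zero energy, hence a fixed point. Your proposal names the right target quantity ($\|A_L\otimes\mathrm{id}_X\|$) and the right obstruction (loss of orthogonality), but leaves out the two mechanisms -- Pisier renorming and $p$-uniform convexity -- that actually defeat the obstruction.
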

Theorem \ref{thm:criterionuc} provides a widely applicable criterion for fixed point properties for finitely presented groups, since such groups naturally act on the Cayley complex associated with the presentation. For general uniformly curved spaces, Theorem \ref{thm:criterionuc} relies on Pisier's renorming theorem \cite{MR0394135}, which states that superreflexive spaces can be renormed in such a way that a generalization (see \eqref{eq:p-uniform_convexity}) of the classical variance formula $\mathrm{Var}(U) = \mathbb{E}[U^2] - \mathbb{E}[U]^2$ holds for $X$-valued random variables. However, in many concrete examples of spaces $X$, our proof is entirely self-contained and we have explicit estimates for $\varepsilon(X)$ (see Remark \ref{rem:explicit_computation_of_epsilonX}).

Theorem \ref{thm:criterionuc} is a direct analogue of \.{Z}uk's spectral criterion mentioned above, since the condition $\|A_L\|_{B(L^2_0(L,\nu))} < \varepsilon$ means that the spectrum of $A_L$, apart from a simple eigenvalue $1$, is contained in $(-\varepsilon,\varepsilon)$, or equivalently, that the spectrum of the Laplacian on $L$, apart from a simple eigenvalue $0$, is contained in $(1-\varepsilon,1+\varepsilon)$. This condition can be viewed as a two-sided spectral gap.

One significant advantage of our spectral criterion is that it is entirely Hilbertian: It only relies on the eigenvalues of the \mbox{($2$-)Laplacian}, although the conclusion is on fixed point properties on uniformly curved spaces. In fact, as a corollary, we obtain bounds on the first positive eigenvalue of the $p$-Laplacian for other values of $p$, by means of interpolation (see Theorem \ref{thm:p-laplacian}).

Theorem \ref{thm:criterionuc} follows from the following more general criterion for fixed point properties that we prove, which is formulated in terms of the norm of the Markov operator acting on vector-valued $L^p$-spaces.
\begin{thmA} \label{thm:maincriterion}
  Let $1<p<\infty$, and let $X$ be a superreflexive Banach space. Then there exists an $\varepsilon'=\varepsilon'(p,X)>0$ such that the following holds: If $\Gamma$ is a group that admits a properly discontinuous cocompact action by simplicial automorphisms on a locally finite simplicial $2$-complex $M$ such that for all its links $L$, we have $\|A_L\|_{B(L^p_0(L,\nu;X))}<\varepsilon'$, then $\Gamma$ has property (F$_X$).
\end{thmA}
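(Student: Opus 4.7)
The plan is to adapt the Garland-\.{Z}uk local-to-global spectral method to the $L^p$-valued setting, in the spirit of the criteria of Nowak \cite{MR3323202} and Oppenheim \cite{MR3278887}: reduce the fixed point conclusion to a vanishing statement for equivariant $X$-valued cochains on $M$, and then prove this vanishing via an $L^p(X)$-Poincar\'e inequality built out of the link spectral gap.

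First, I would reformulate. Any affine isometric action $\alpha$ of $\Gamma$ on $X$ splits as $\alpha(g) = \pi(g) + b(g)$, with $\pi$ a linear isometric representation and $b\colon \Gamma \to X$ a $\pi$-cocycle. Since $X$ is superreflexive, it admits an equivalent uniformly convex norm, so a bounded $\Gamma$-orbit has a unique Chebyshev centre which is a fixed point; hence it suffices to show that $b$ is a coboundary. Using cocompactness and a choice of fundamental domain, $b$ extends to an equivariant $X$-valued $1$-cochain $\beta$ on $M$, which satisfies $d_1 \beta = 0$ as a consequence of the cocycle identity, and $b$ is a coboundary precisely when $\beta$ lies in the image of the coboundary $d_0$ on equivariant $0$-cochains.

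Second, the hypothesis $\|A_L\|_{B(L^p_0(L,\nu;X))} < \varepsilon'$ is equivalent to a local Poincar\'e inequality on each link $L_v$: for every $f\colon V(L_v) \to X$ of mean zero with respect to $\nu$,
\begin{equation*}
  \|f\|_{L^p(L_v,\nu;X)} \leq \frac{1}{1-\varepsilon'}\,\|(I - A_{L_v})f\|_{L^p(L_v,\nu;X)}.
\end{equation*}
At each vertex $v$, the restriction of $\beta$ to the edges through $v$ defines an $X$-valued function $f_v$ on $V(L_v)$, and the cocycle identity $d_1\beta = 0$ translates into a discrete harmonicity relation for $f_v$ inside $L_v$. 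Combining both ingredients and summing over the vertices in a fundamental domain yields a global $L^p(X)$-Poincar\'e inequality
\begin{equation*}
  \inf_{g} \|\beta - d_0 g\|_p \leq \eta(\varepsilon',p,X)\,\|\beta\|_p,
\end{equation*}
with $\eta(\varepsilon',p,X) \to 0$ as $\varepsilon' \to 0$. For $\varepsilon'$ small enough that $\eta < 1$, iteration produces an equivariant primitive $F\colon V(M) \to X$ of $\beta$, and any value of $F$ is the sought-after fixed point of $\alpha$.

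The main obstacle is the globalization step, because the Banach setting lacks the Hilbertian orthogonality used in the classical Garland-\.{Z}uk argument. The workaround is to rely on $L^p/L^{p'}$ duality, which is available since superreflexivity passes to the dual $X^*$, and to track the uniform convexity and smoothness moduli of $X$ carefully, so that the threshold $\varepsilon'(p,X)$ depends only on $p$ and the geometry of $X$, as the statement requires. Cocompactness of the action ensures that only finitely many vertex- and link-isomorphism types occur in $M$, and combined with the $2$-dimensionality of $M$ this reduces the globalization to a uniform combinatorial averaging to which the local Poincar\'e inequality can be applied.
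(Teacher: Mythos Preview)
There is a genuine gap at the heart of your globalization step. The local inequality you state, $\|f\|_{L^p_0} \leq (1-\varepsilon')^{-1}\|(I-A_L)f\|_{L^p_0}$, is just the Neumann series bound and carries a constant $>1$. What the Garland--\.{Z}uk--Oppenheim machinery actually consumes is the link Poincar\'e constant $\pi_{p,L}(X)<1$, i.e.\ $\inf_c\|f-c\|_p \leq (1-\delta)\|\nabla f\|_p$. The triangle inequality only gives $\|(I-A_L)f\|_p \leq \|\nabla f\|_p$, so your inequality yields $\|f\|_p \leq (1-\varepsilon')^{-1}\|\nabla f\|_p$, which is never $<1$. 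Producing that extra improvement is precisely the nontrivial content of the paper (Theorem~\ref{thm:improvement_triangle_inequality}): one applies the $p$-uniform convexity inequality \eqref{eq:p-uniform_convexity} conditionally on $Z_0$ to get $\|(I-A_L)f\|_p^p + C\,\mathbb E\|A_Lf(Z_0)-f(Z_1)\|^p \leq \|\nabla f\|_p^p$, and the second term is bounded below by $(1-\varepsilon')^p\|f\|_p^p$. This is the missing idea; your appeal to ``$L^p/L^{p'}$ duality'' does not substitute for it. Relatedly, your claim that $\eta(\varepsilon',p,X)\to 0$ as $\varepsilon'\to 0$ is not what happens: even with the correct argument the contraction constant tends to $(1+C)^{-1/p}<1$, not to $0$.

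Two further points. First, you invoke uniform convexity only for the Chebyshev-centre reduction, which is peripheral; the paper does not use it there at all (the fixed point is built directly as the limit of a Cauchy sequence of equivariant maps, Theorem~\ref{thm:ppoincaretofixedpoint}). The place where uniform convexity is indispensable is exactly the Poincar\'e improvement above. Second, the statement is for an arbitrary superreflexive $X$ and exponent $p$, not for a $p$-uniformly convex $X$. The paper handles this by Pisier's renorming to a $q$-uniformly convex $Y$ with the same isometry group, together with an interpolation lemma (Lemma~\ref{lem:interpolationA}) showing that smallness of $\|A_L\|$ on $L^p_0(X)$ forces smallness on $L^q_0(Y)$; your outline does not address this reduction.
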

In the proof of Theorem \ref{thm:maincriterion}, working with the Markov operator rather than the Laplacian makes a real difference, since one can use interpolation techniques.

The essential part of the proof of Theorem \ref{thm:maincriterion} is to derive a $p$-Poincar\'e inequality with small constant from the fact that the Markov operator has small norm. From that point, the result follows from the proof of the aforementioned result of Bourdon or from the result of Oppenheim. For completeness, we also present an elementary proof of the fact that Poincar\'e inequalities give rise to fixed points (see Theorem \ref{thm:ppoincaretofixedpoint}). The line of proof is similar to Oppenheim's proof, and we claim no originality at this point.

As mentioned above, the second aim of this article is to apply our spectral criterion to random groups in the triangular density model. In the general setting of uniformly curved spaces, we obtain the following result.
\begin{thmA} \label{thm:maintheoremrandomgroups}
Let $\eta \in (0,2)$. There is a constant $C > 0$ and a sequence $(u_m)$ of positive real numbers tending to $0$ such that the following holds: For every $m \in \N$ and $d \in (0,1)$ satisfying
\begin{equation} \label{eq:condition_on_d}
  d \geq \frac 1 3 + \frac{\log \log m - \log(2-\eta)}{3 \log m},
\end{equation}
with probability $\geq 1-u_m$, a group in $\cM(m,d)$ has property (F$_X$) for every uniformly curved space satisfying $\varepsilon(X) \geq \sqrt{\frac{Cm}{(2m-1)^{3d}}}$. In particular, the latter is the case when $\varepsilon(X) \geq \sqrt{\frac{C}{\log m}}$.
\end{thmA}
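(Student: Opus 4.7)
My plan is to reduce everything to Theorem~\ref{thm:criterionuc} by establishing a two-sided spectral-gap estimate on the links of the standard presentation 2-complex of a random triangular presentation. Given $\Gamma = \langle S \mid R \rangle \in \cM(m,d)$, I would form the presentation 2-complex; then $\Gamma$ acts properly discontinuously and cocompactly by simplicial automorphisms on its universal cover, and every link is isomorphic to a single finite graph $L = L(S,R)$ on the $2m$-element vertex set $S \sqcup S^{-1}$, in which each cyclically reduced length-$3$ relator contributes three edges, one at each cyclic position. By Theorem~\ref{thm:criterionuc} it then suffices to show that the random graph $L$ satisfies
\[
\|A_L\|_{B(L^2_0(L,\nu))} \leq \sqrt{\frac{Cm}{(2m-1)^{3d}}}
\]
with probability at least $1 - u_m$, for some $C>0$ and some null sequence $(u_m)$ depending only on $\eta$.

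This is where the random-matrix step enters. In $\cM(m,d)$, the relators are chosen uniformly among the $\Theta((2m-1)^3)$ cyclically reduced words of length~$3$, so $L$ is distributed as a sparse Erd\H{o}s-R\'enyi-type random graph on $2m$ vertices with roughly $(2m-1)^{3d}$ edges, hence average degree of order $(2m-1)^{3d}/m$. I would then prove a random-matrix bound (of the same nature as the Erd\H{o}s-R\'enyi spectral results advertised in the abstract), and combine it with a companion degree-concentration statement, to show that with overwhelming probability all non-trivial eigenvalues of $A_L$ lie in the interval of radius $C'\sqrt{m/(2m-1)^{3d}}$ around $0$. The condition \eqref{eq:condition_on_d} is tailored precisely so that the expected minimum degree overshoots the concentration threshold $(2-\eta)\log m$ by a factor tending to $1$, allowing the failure probabilities of the spectral and degree estimates to be summed into a sequence $u_m \to 0$. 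The ``in particular'' clause is then a one-line computation: \eqref{eq:condition_on_d} forces $(2m-1)^{3d} \geq (2-\eta) m \log m$, so $\sqrt{Cm/(2m-1)^{3d}} \leq \sqrt{C/((2-\eta)\log m)}$ after absorbing the constant.

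The hard part will be the random-matrix step in the extremely sparse regime where the average degree is only of order $\log m$. Classical moment-method estimates \emph{\`a la} Bai-Yin or F\"uredi-Koml\'os do not apply directly here, because the contribution of atypical vertices of anomalous degree is non-negligible; one must couple the spectral estimate with a trimming or regularisation argument based on tight concentration of the degree sequence of an Erd\H{o}s-R\'enyi graph near its threshold, and it is this coupling that dictates both the shape of the constant $C$ and the precise form of the threshold \eqref{eq:condition_on_d}. All remaining ingredients are either standard (the presentation 2-complex, its $\Gamma$-action, the identification of the link) or formal consequences of results already in place, so essentially the whole difficulty of the theorem is concentrated in this single probabilistic estimate.
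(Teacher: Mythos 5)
The high-level outline is right: apply Theorem~\ref{thm:criterionuc} to the presentation $2$-complex and reduce to a two-sided spectral-gap estimate $\|A_L\|_{B(L^2_0(L,\nu))} \lesssim \sqrt{m/(2m-1)^{3d}}$ on the link $L = L(S,R)$, with the connectivity/degree threshold explaining the form of~\eqref{eq:condition_on_d}. But there is a genuine gap in the probabilistic step, at exactly the place you flag as ``the hard part''. You assert that $L$ is ``distributed as a sparse Erd\H{o}s--R\'enyi-type random graph on $2m$ vertices'', and then propose to apply an Erd\H{o}s--R\'enyi spectral bound to $L$ itself. This is not correct: the edges of $L$ are \emph{not} independent, even approximately. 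Each relator $s_x s_y s_z$ contributes a deterministic triple of edges $\{s_x,s_y^{-1}\}$, $\{s_y,s_z^{-1}\}$, $\{s_z,s_x^{-1}\}$, one at each cyclic position, so the edges of $L$ come in correlated triples. Consequently $L$ is not an Erd\H{o}s--R\'enyi graph (even allowing for multi-edges), and the Hoffman--Kahle--Paquette-type spectral bound cannot be invoked for $L$ directly.

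The paper's argument circumvents this by decomposing $L$ into three sub-graphs $L^1, L^2, L^3$ according to the cyclic position of the edge, and it is only \emph{each individual $L^i$} that is (after a coupling that adds/removes a few edges) an Erd\H{o}s--R\'enyi graph $\mathbb{G}(2m,\rho')$; the three pieces are mutually dependent. The genuinely new ingredient the proof then needs is a way to pass from two-sided spectral gaps of the pieces to a two-sided spectral gap of the union $L = L^1 \cup L^2 \cup L^3$. This is Proposition~\ref{prp:spectral_gap_under_perturbation2}, whose hypothesis is precisely that the degree sequences of the pieces are concentrated in $L^1$-average (Lemma~\ref{lem:average_concentration}) --- the weaker, but correct, concentration available near the connectivity threshold where uniform (i.e.\ $L^\infty$) concentration of degrees fails. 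Your proposal senses the need for degree-concentration input in the sparse regime but never identifies the three-piece decomposition of the link, nor the combination lemma; without those, the Erd\H{o}s--R\'enyi spectral theory you want to invoke simply does not apply to the object at hand.

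Two smaller remarks. First, the paper routes through the binomial triangular model $\Gamma(m,\rho)$ and the model $\cM'(m,N)$ before reaching $\cM(m,d)$; this is a standard transfer and your skipping it is not a gap, merely an omission. Second, in the ``in particular'' clause the clean bound coming from~\eqref{eq:condition_on_d} is $(2m-1)^{3d} \gtrsim \frac{2}{2-\eta}\, m\log m$, not $(2-\eta)\,m\log m$; either version suffices to absorb into the constant $C$, but your inequality is written backwards.
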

In the setting of $L^p$-spaces, $\theta$-Hilbertian spaces with $\theta=\frac{2}{p}$ or Banach spaces $\alpha$-isomorphic to a subquotient of a $\theta$-Hilbertian space with $\theta=\frac{2}{p}$, we obtain the following result.
\begin{corA} \label{cor:randomgroupslp}
Let $\eta \in (0,2)$. There is a sequence $(u_m)$ of positive real numbers tending to $0$ such that the following holds: For every integer $m$ and $d \in (0,1)$ satisfying
\[
  d \geq \frac 1 3 + \frac{\log \log m - \log(2-\eta)}{3 \log m},
\]
with probability $\geq 1-u_m$, a group in $\mathcal{M}(m,d)$ has
  \begin{enumerate}
  \item\label{item:FLp_quantitative} property (F$_{L^p}$) for every $p \in \left[2,\sqrt{\frac{(3d-1) \log m}{\eta + \log 2}}\right]$;
  \item\label{item:FX_quantitative} property (F$_{X}$) for every $X$ that is $\alpha$-isomorphic to a subquotient of a $\frac 2 p$-Hilbertian space with $p \in \left[2, \sqrt{\frac{(3d-1) \log m}{\eta+\log (2\alpha)}} - \frac{1}{2}\right]$.
  \end{enumerate}
\end{corA}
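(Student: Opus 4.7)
The plan is to derive Corollary~\ref{cor:randomgroupslp} directly from Theorem~\ref{thm:maintheoremrandomgroups}, by combining it with the explicit lower bound on $\varepsilon(X)$ for the classes of Banach spaces at hand, as announced in Remark~\ref{rem:explicit_computation_of_epsilonX}.

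First I would reduce Item~\ref{item:FLp_quantitative} to (a slightly sharpened version of) Item~\ref{item:FX_quantitative} with $\alpha=1$. Indeed, for $p\geq 2$, the space $L^p$ is exactly $\frac{2}{p}$-Hilbertian via the complex interpolation identity $L^p=[L^\infty,L^2]_{2/p}$, so it is trivially $1$-isomorphic to a subquotient of a $\frac{2}{p}$-Hilbertian space. The mildly sharper range in Item~\ref{item:FLp_quantitative} (the absence of the $-\tfrac12$ correction) should reflect the fact that $L^p$ genuinely equals, rather than merely being $\alpha$-isomorphic to a subquotient of, a $\frac{2}{p}$-Hilbertian space, which avoids a certain loss in the interpolation constant.

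The key step is to establish the explicit lower bound on $\varepsilon(X)$. For $X$ that is $\alpha$-isomorphic to a subquotient of a $\theta$-Hilbertian space, the main tool is Pisier's complex interpolation theorem for the Markov operator on Bochner $L^2$-spaces, giving an estimate of the form
\[
\|A_L\|_{B(L^2_0(L,\nu;X))}\;\leq\;C(\alpha,\theta)\,\|A_L\|_{B(L^2_0(L,\nu))}^{\theta}
\]
with $C(\alpha,\theta)$ controlled by $(2\alpha)^{1-\theta}$ (using that the Markov operator is a contraction on the non-Hilbertian component). With $\theta=\frac{2}{p}$, combined with Theorem~\ref{thm:maincriterion} applied at $p=2$, and tracking the $p$-dependence of the threshold $\varepsilon'(2,X)$ carefully, this should deliver an explicit lower bound for $\varepsilon(X)$ whose logarithm is essentially quadratic in $p$. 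The hard part will be extracting this quadratic $p$-dependence cleanly: it comes from the interplay between the interpolation factor $\|A_L\|^{2/p}_{L^2_0}$ (contributing $1/p$ in the exponent) and the exponential $p$-dependence of the threshold coming from the $p$-Poincar\'e ingredient underlying Theorem~\ref{thm:maincriterion} (contributing a factor $p$), which together conspire into $p^2$.

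Finally I would substitute this explicit bound into the hypothesis $\varepsilon(X)\geq\sqrt{Cm/(2m-1)^{3d}}$ of Theorem~\ref{thm:maintheoremrandomgroups}. Taking logarithms, using $\log(2m-1)=\log m+\log 2+o(1)$, completing the square in $p$, and absorbing the lower-order error terms into the parameter $\eta$, should reduce the hypothesis to
\[
\bigl(p+\tfrac{1}{2}\bigr)^{2}\,\bigl(\eta+\log(2\alpha)\bigr)\;\leq\;(3d-1)\log m,
\]
which is precisely the range for $p$ in Item~\ref{item:FX_quantitative}; via the reduction in the second paragraph, the case $\alpha=1$ yields Item~\ref{item:FLp_quantitative} without the $-\tfrac{1}{2}$ correction. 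The remaining work is routine algebra.
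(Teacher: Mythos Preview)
Your overall plan --- derive the corollary from Theorem~\ref{thm:maintheoremrandomgroups} together with the explicit bounds on $\varepsilon(X)$ in Remark~\ref{rem:explicit_computation_of_epsilonX}, then do the algebra --- is exactly the paper's proof, and your final algebraic reduction to $(p+\tfrac12)^2(\eta+\log(2\alpha))\leq (3d-1)\log m$ is on target.

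There is, however, a pitfall in your sketch of how to obtain the explicit $\varepsilon(X)$. You propose to bound $\|A_L\|_{B(L^2_0(L,\nu;X))}$ by interpolation in the coefficient space and then apply Theorem~\ref{thm:maincriterion} at $p=2$. But for a $\tfrac2q$-Hilbertian (hence $q$-uniformly convex, not $2$-uniformly convex) space $X$, the proof of Theorem~\ref{thm:maincriterion} at $p=2$ itself invokes Lemma~\ref{lem:interpolationA} to pass from $L^2_0(V;X)$ to $L^q_0(V;X)$ before applying Theorem~\ref{thm:improvement_triangle_inequality} at $q$. That is a \emph{second} interpolation with exponent $2/q$, so that $\varepsilon'(2,X)$ is of order $2^{-q^2/2}$ rather than the $2^{-q}$ you assume; combined with your first interpolation you would end up with $\log\varepsilon(X)$ of order $-q^3$, which only yields $p\lesssim(\log m)^{1/3}$ and not the claimed $(\log m)^{1/2}$.

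The fix, which is what the paper does in the proof of Remark~\ref{rem:explicit_computation_of_epsilonX}, is to perform a single interpolation that moves both the exponent on $V$ and the coefficient space simultaneously: use $L^q(V;X)=[L^\infty(V;X_0),L^2(V;H)]_{2/q}$ to get $\|A_L\|_{B(L^q_0(V;X))}\leq 2^{1-2/q}\|A_L^0\|^{2/q}$ directly, and then apply Theorem~\ref{thm:improvement_triangle_inequality} at $q$ (threshold $\sim 2/(q2^q)$ by Remark~\ref{rem:constants}). This yields $\varepsilon(X)=2q^{-q/2}2^{-q^2/2}$, with the quadratic exponent you anticipated, and then your algebra goes through as written.
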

Equation \eqref{item:FLp_quantitative} is an improvement of the results by Dru\c{t}u and Mackay. Indeed, in \cite[Corollary 1.7]{drutumackay}, an analogous statement was proved for $p$ in the range $\left[2,C\left( \frac{\log m}{\log \log m}\right)^{\frac{1}{2}} \right]$ and for $d>\frac{1}{3}$ independent from $m$, with weaker results for a slightly larger range of $d \geq \frac{1}{3}$ (see also \cite[Remark 9.5]{drutumackay}). Corollary \ref{cor:randomgroupslp} is even new for $p=2$ (corresponding to property (T)). Indeed, before it was only known that random groups in $\mathcal{M}(m,d)$ have property (T) with probability $1-o(1)$ in the regime
\[
  d \geq \frac 1 3 + \frac{\log \log m +\zeta}{3 \log m}
\]
for some constant $\zeta > 0$ (see \cite{MR3305311}).

Our results on fixed point properties of actions on $L^p$-spaces naturally lead to new estimates on the conformal dimension. The conformal dimensional $\mathrm{Confdim}(\partial_{\infty}\Gamma)$ of the boundary $\partial_{\infty}\Gamma$ of a hyperbolic group $\Gamma$ is a canonically defined quasi-isometry invariant, which was introduced by Pansu \cite{MR1024425} (see also \cite{mackaytyson}). Relying on a result of Bourdon (see \cite{MR3497258}), we obtain, along the same lines as Dru\c{t}u and Mackay, the following strengthening of the lower bound in \cite[Theorem 1.11]{drutumackay} as an immediate consequence of Corollary \ref{cor:randomgroupslp}.
\begin{corA}
Let $\eta \in (0,2)$. There is a sequence $(u_m)$ of positive real numbers tending to $0$ such that the following holds: For every integer $m$ and $d \in (0,1)$ satisfying
\[
  d \geq \frac 1 3 + \frac{\log \log m - \log(2-\eta)}{3 \log m},
\]
with probability $\geq 1-u_m$, a group in $\mathcal{M}(m,d)$ is hyperbolic and satisfies
\[
  \left(\frac{(3d-1)\log{m}}{\eta+\log 2}\right)^{\frac{1}{2}} \leq \mathrm{Confdim}(\partial_{\infty}(\Gamma)).
\]
\end{corA}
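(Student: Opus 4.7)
The plan is to combine Corollary~\ref{cor:randomgroupslp}(\ref{item:FLp_quantitative}) with Bourdon's construction of $L^p$-cocycles from \cite{MR3497258}, which asserts that for any non-elementary hyperbolic group $\Gamma$ and any real number $p > \mathrm{Confdim}(\partial_\infty \Gamma)$, the group $\Gamma$ admits a proper affine isometric action on a suitable $L^p$-space built from its boundary, and in particular fails property (F$_{L^p}$). Contrapositively, if $\Gamma$ is non-elementary hyperbolic and has property (F$_{L^p}$), then $p \leq \mathrm{Confdim}(\partial_\infty \Gamma)$.

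First I would record the classical fact that, under hypothesis~\eqref{eq:condition_on_d} (which in particular forces $d > \tfrac{1}{3}$), a group in $\mathcal{M}(m,d)$ is infinite, non-elementary and hyperbolic with probability tending to $1$ as $m \to \infty$; this goes back to \.{Z}uk and is used in exactly the same way in \cite{drutumackay}. On the intersection of this event with the event provided by Corollary~\ref{cor:randomgroupslp}(\ref{item:FLp_quantitative})---an intersection that still has probability $\geq 1 - u_m$ after harmlessly absorbing an $o(1)$ term into the sequence $u_m$---the random group $\Gamma$ is a non-elementary hyperbolic group enjoying property (F$_{L^p}$) for every $p$ in the closed interval $[2,P]$, where
\[
  P \;:=\; \left(\frac{(3d-1)\log m}{\eta + \log 2}\right)^{1/2}.
\]

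Applying Bourdon's theorem at the endpoint $p = P$---which is admissible, since $P$ itself lies in the closed interval on which (F$_{L^p}$) has just been established, so no limiting argument is needed---yields $\mathrm{Confdim}(\partial_\infty \Gamma) \geq P$, which is precisely the desired inequality. I do not anticipate a genuine obstacle here: the argument is a mechanical substitution of our improved range for $p$ into the scheme already used by Dru\c{t}u and Mackay, and the only things to double-check are the precise form of Bourdon's theorem in \cite{MR3497258} and the (well-known) high-probability hyperbolicity input for the triangular model.
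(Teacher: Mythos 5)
Your proposal is correct and follows essentially the same route as the paper: the authors themselves present this corollary as an immediate consequence of Corollary~\ref{cor:randomgroupslp}\eqref{item:FLp_quantitative}, Bourdon's result from \cite{MR3497258} (which for non-elementary hyperbolic $\Gamma$ and $p>\mathrm{Confdim}(\partial_\infty\Gamma)$ produces a proper, hence fixed-point-free, affine isometric action on an $L^p$-space), and the standard fact that for $d<\frac12$ a group in $\mathcal M(m,d)$ is infinite, non-elementary and hyperbolic w.o.p., exactly as in Dru\c{t}u--Mackay. The only point worth keeping in mind (which you flag yourself) is to verify the precise hypotheses in \cite{MR3497258} under which the $L^p$-cocycle construction applies to these random groups, but this is the same shortcut the paper takes and is handled in \cite{drutumackay}.
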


\subsection{Relation to other work}
In an earlier version of this article, we applied our criterion, i.e.~Theorem \ref{thm:criterionuc}, to another model of random groups. Indeed, we followed the approach of \cite{MR3106728} and obtained the following result, which is much weaker than Theorem \ref{thm:maintheoremrandomgroups}.
\begin{prp} \label{prp:kk}
Let $X$ be a uniformly curved Banach space. For every density $d > \frac{1}{3}$, a random group in the triangular model $\mathcal{M}(m,d)$ has property (F$_X$) w.o.p., that is
\[
  \lim_{m \to \infty} \mathbb{P}(\Gamma \textrm{ in } \mathcal{M}(m,d) \textrm{ has (F$_X$)})=1.
\]
\end{prp}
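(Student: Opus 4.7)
The plan is to verify the spectral hypothesis of Theorem~\ref{thm:criterionuc} for the Cayley $2$-complex of a random triangular presentation, and then invoke the theorem directly. Fix a uniformly curved $X$ and let $\varepsilon(X) > 0$ be the constant supplied by Theorem~\ref{thm:criterionuc}; it will suffice to show that, with probability tending to $1$ as $m \to \infty$, every vertex link $L$ of the Cayley $2$-complex of $\Gamma \in \mathcal{M}(m,d)$ satisfies $\|A_L\|_{B(L^2_0(L,\nu))} < \varepsilon(X)$.

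First, I would record the geometric setup. A triangular presentation $\Gamma = \langle S \mid R \rangle$ with $|S| = 2m$ symmetric generators and $|R|$ cyclically reduced relators of length $3$ yields a locally finite simplicial $2$-complex $M$ (the Cayley complex) on which $\Gamma$ acts freely, properly discontinuously, cocompactly, and by simplicial automorphisms. All vertex links are $\Gamma$-isomorphic to a single labelled graph $L$ on vertex set $S$, in which $s,t \in S$ are joined whenever some relator contributes a triangular $2$-cell meeting the two edges at the base vertex labelled $s$ and $t$. In the model $\mathcal{M}(m,d)$, this link $L$ is therefore a random graph on $2m$ vertices directly governed by the random choice of $R$, whose size is of order $(2m-1)^{3d}$.

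Second, I would establish the needed spectral gap for $L$ when $d > 1/3$. Since the expected degree in $L$ grows like $(2m-1)^{3d-1} \to \infty$, one adapts the trace/moment methods developed for random graphs and for random groups at density $d > 1/3$ (as in \cite{MR3106728}, and going back to \.Zuk's original argument in the triangular model) to the random link of $\mathcal{M}(m,d)$. The outcome of this analysis is a two-sided concentration estimate showing that
\begin{equation*}
\mathbb{P}\bigl(\|A_L\|_{B(L^2_0(L,\nu))} < \varepsilon(X)\bigr) \longrightarrow 1 \qquad \text{as } m \to \infty.
\end{equation*}
Combining this estimate with Theorem~\ref{thm:criterionuc} yields that, with probability tending to $1$, the group $\Gamma \in \mathcal{M}(m,d)$ has property~(F$_X$), which is exactly Proposition~\ref{prp:kk}.

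The main obstacle is obtaining the two-sided spectral bound in the second step. \.Zuk's original analysis for property~(T) requires only a one-sided lower bound on the first positive eigenvalue of the link Laplacian, whereas Theorem~\ref{thm:criterionuc} genuinely needs the full operator norm of $A_L$ on $L^2_0(L,\nu)$ to be small; this constrains both the second-largest and the most negative eigenvalue of the link uniformly over all realisations. Controlling the extreme negative eigenvalue requires a careful higher-trace computation that fully exploits the independence of the relators and the $S \to S^{-1}$ symmetry, rather than the weaker classical estimates that suffice for property~(T). Once this trace estimate is in place, a union bound over the (single conjugacy class of) link completes the proof.
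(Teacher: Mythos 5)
Your route --- invoke Theorem~\ref{thm:criterionuc} after establishing a two-sided spectral gap for the random link by the trace/moment methods of Kotowski--Kotowski \cite{MR3106728} --- is precisely the route the paper says it followed in an earlier version, but it is not the argument the final paper supplies. In the final paper, Proposition~\ref{prp:kk} appears only in the ``Relation to other work'' subsection and is a soft consequence of the quantitative Theorem~\ref{thm:maintheoremrandomgroups}: for fixed $d>\frac13$ the threshold $\sqrt{Cm/(2m-1)^{3d}}$ tends to zero, so any given $\varepsilon(X)>0$ eventually passes. Theorem~\ref{thm:maintheoremrandomgroups} is proved through the binomial triangular model $\Gamma(m,\rho)$: the three directed sub-links $L^1,L^2,L^3$ of the single (weighted) link $L$ are compared, after an $O(1)$ per-vertex edge correction, to Erd\H{o}s--R\'enyi graphs $\mathbb{G}(2m,\rho')$; the Hoffman--Kahle--Paquette theorem (Theorem~\ref{thm:hoffmanetal}) supplies a two-sided $L^2$ gap of order $(m^2\rho)^{-1/2}$ for each; and the new Proposition~\ref{prp:spectral_gap_under_perturbation2} glues them together even though the degree sequences are only $L^1$-concentrated. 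The payoff of that route is quantitative: explicit thresholds, a $d$-regime down to $\frac13 + O(\log\log m/\log m)$, and the improvements of Corollary~\ref{cor:randomgroupslp}, none of which your route gives.

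You have correctly located the technical crux: Theorem~\ref{thm:criterionuc} requires the full operator norm of $A_L$ on $L^2_0$ to be small (a two-sided estimate), whereas \.{Z}uk's property (T) criterion needs only the one-sided bound $\lambda_1 > \frac12$. But your proposal asserts, rather than establishes, this estimate, and that assertion is essentially the entire technical content of the proposition. A complete argument would need either to carry out the adapted moment computation you gesture at, or to substitute the Erd\H{o}s--R\'enyi comparison of Sections~\ref{sec:ergraphs}--\ref{sec:randomgroups}. Two minor slips worth fixing: in $\mathcal{M}(m,d)$ the generating set $S$ has $m$ elements and the link vertex set is $S \cup S^{-1}$ of size $2m$ (you conflated these); and Theorem~\ref{thm:criterionuc} requires only a properly discontinuous cocompact simplicial action, so the assertion that the Cayley complex action is free is both unnecessary and not automatic for an arbitrary random presentation.
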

The novelty of this result was mainly the fact that we obtained fixed point properties for actions of random groups on Banach spaces very different from $L^p$-spaces, but in the setting of $L^p$-spaces, Proposition \ref{prp:kk} is weaker than the results of \cite{drutumackay}.

After this earlier version, it was suggested to us by Dru\c{t}u and Mackay to work with the binomial triangular model for random groups and Erd\H{o}s-R\'enyi graphs rather than following the approach of \cite{MR3106728}. They expected (see \cite[Section 1.4]{drutumackay}) that this would give results that are quantitative improvements of their results, which, as explained in Section \ref{subsec:mainresults}, indeed turns out to be the case.

In \cite{drutumackay}, Dru\c{t}u and Mackay also obtained results for random groups in the Gromov density model and for the critical density $d=\frac{1}{3}$. We expect that our results can be transferred to these settings as well.

\subsection{Organization of the article}
The article is organized as follows. Section \ref{sec:banachspaces} covers some preliminaries on the geometry of Banach spaces. In Section \ref{sec:graphs}, we explain how small Markov operators give rise to Poincar\'e inequalities. This section also includes some new results that may be of independent interest. In particular, we prove some bounds on the eigenvalues of $p$-Laplacians. In Section \ref{sec:poincarefpp}, we explain how Poincar\'e inequalities give rise to fixed points. Theorem \ref{thm:maincriterion} and Theorem \ref{thm:criterionuc} are proved in Section \ref{sec:proofmainresult}. In order to prove Theorem \ref{thm:maintheoremrandomgroups}, we first establish some new results on the spectrum and degree distribution of Erd\H{o}s-R\'enyi graphs in Section \ref{sec:ergraphs}. These may be of independent interest. Fixed point properties for random groups are investigated in Section \ref{sec:randomgroups}. In particular, Theorem \ref{thm:maintheoremrandomgroups} and Corollary \ref{cor:randomgroupslp} are proved in that section.

\section*{Acknowledgements}
We are indebted to Cornelia Dru\c{t}u and John Mackay for their suggestion to work in the binomial triangular model and with Erd\H{o}s-R\'enyi graphs (as described above). We also thank them for interesting discussions.

We thank Marc Bourdon and Gilles Pisier for useful comments on an earlier version of this article.

\section{Preliminaries on Banach spaces} \label{sec:banachspaces}

\subsection{Superreflexivity and uniform convexity} \label{subsec:sruc}
Two Banach spaces $X$ and $Y$ are called $C$-isomorphic if there exists an isomorphism $T\colon X \rightarrow Y$ such that $\|T\| \|T^{-1}\|\leq C$. The Banach-Mazur distance $d(X,Y)$ between $X$ and $Y$ is defined as the infimum of such $C$, where the infimum is taken over all linear isomorphisms between $X$ and $Y$.

A Banach space $Y$ is said to be finitely representable in a Banach space $X$ if for every finite-dimensional subspace $U$ of $Y$ and every $\varepsilon > 0$, there exists a subspace $V$ of $X$ such that $d(U,V) < 1 + \varepsilon$, where $d$ is the Banach--Mazur distance. A Banach space $X$ is called superreflexive if every Banach space that is finitely representable in $X$ is reflexive. Equivalently, a Banach space $X$ is superreflexive if and only if all its ultrapowers are reflexive.

A Banach space $X$ is uniformly convex if
\[
  \delta_X(\varepsilon) := \inf\left\{ 1 - \frac{\|x+y\|}{2} \;\Bigg\vert\; \|x\| \leq 1,\;\|y\| \leq 1,\;\|x-y\| \geq \varepsilon \right\} > 0
\]
for all $\varepsilon \in (0,2]$. The function $\delta_X$ is called the modulus of convexity of $X$.

Every uniformly convex Banach space is superreflexive, and every superreflexive Banach space admits an equivalent uniformly convex norm \cite{MR0336297}.

Let $p \in [2,\infty)$. A Banach space $X$ is called $p$-uniformly convex if there exists a $C>0$ such that $\delta_X(\varepsilon) \geq C\varepsilon^p$ for all $\varepsilon \in (0,2]$. Equivalently \cite[Proposition 2.4]{MR0394135} (see also \cite[Lemma 6.5]{MR3210176}), $X$ is $p$-uniformly convex if there exists a constant $C>0$ such that for every $X$-valued random variable $U$,
\begin{equation} \label{eq:p-uniform_convexity}
  \|\mathbb{E}[U]\|^p + C\,\mathbb{E}[\|U-\mathbb{E}[U]\|^p] \leq \mathbb{E}[\|U\|^p].
\end{equation}
It is in the form of \eqref{eq:p-uniform_convexity} that we will use $p$-uniform convexity. This inequality provides quantitative estimates on the error term in Jensen's inequality $\|\mathbb{E}[U]\|^p \leq \mathbb{E}[ \|U\|^p]$. It can be seen as a generalization of the classical variance equality for Hilbert space-valued random variables:
\[ \mathbb{E}[\|U-\mathbb{E}[U]\|^2] = \mathbb{E}[\|U\|^2] - \|\mathbb{E}[U]\|^2.\]

By a famous theorem of Pisier \cite{MR0394135}, every uniformly convex Banach space has an equivalent norm with respect to which it is $p$-uniformly convex for some $p \in [2,\infty)$. 

It is well known that if $X$ is an $L^p$-space with $p \geq 2$ or, more generally, a strictly $\theta$-Hilbertian space with $\theta=\frac{2}{p}$ (see Section \ref{subsec:thetahilbertian}), then $X$ is $p$-uniformly convex. In the case of strictly $\theta$-Hilbertian spaces, \eqref{eq:p-uniform_convexity} holds with $C=4^{1-\frac{1}{\theta}}$ (Proposition \ref{prp:p-uniform_constant_for_theta_hilbertian}).

\subsection{Complex interpolation}\label{subsection:complex_interpolation}
We refer to \cite{MR0482275} and \cite{MR2732331} for details on complex interpolation for compatible couples of (complex) Banach spaces. We recall that a compatible couple $(X_0,X_1)$ of Banach spaces is a pair of Banach spaces together with continuous linear embeddings from $X_0$ and $X_1$ into the same topological vector space $\mathcal{X}$, which can always be assumed to be a Banach space. Complex interpolation is a way to assign to such a couple $(X_0,X_1)$ a family $(X_\theta)_{\theta \in [0,1]}$ of Banach spaces (subspaces of $\mathcal X$) that interpolate between $X_0$ and $X_1$. For example, if $(\Omega,\mu)$ is a measure space and $(X_0,X_1) = (L^{p_0}(\Omega,\mu),L^{p_1}(\Omega,\mu))$ (seen as subspaces of the topological vector space of all measurable maps from $\Omega$ to $\C$), then $X_\theta$ is the space $L^{p_\theta}(\Omega,\mu)$, where $\frac{1}{p_\theta} = \frac{1-\theta}{p_0}+\frac{\theta}{p_1}$. More generally, if $(X_0,X_1)$ is a compatible couple, then the complex interpolation space of parameter $\theta$ for the couple $(L^{p_0}(\Omega,\mu;X_0),L^{p_1}(\Omega,\mu;X_1))$ is $L^{p_\theta}(\Omega,\mu;X_\theta)$. A fundamental property of complex interpolation is Stein's interpolation theorem, which roughly says the following: If $(X_0,X_1)$ and $(Y_0,Y_1)$ are compatible couples and if there is a holomorphic family of linear operators $T_z:X_0 + X_1 \to Y_0 + Y_1$, with $\mathrm{Re}(z) \in [0,1]$, such that for all $z$ with $\mathrm{Re}(z) \in \{0,1\}$, we have $\|T_z \colon X_{\mathrm{Re}(z)} \to Y_{\mathrm{Re}(z)} \| \leq M_{\mathrm{Re}(z)}$, then $\|T_z \colon X_{\mathrm{Re}(z)} \to Y_{\mathrm{Re}(z)} \| \leq M_{\mathrm{Re}(z)}$ for all $z$ with $\mathrm{Re}(z) \in [0,1]$, where $M_\theta = M_0^{1-\theta} M_1^\theta$.

\subsection{$\theta$-Hilbertian spaces} \label{subsec:thetahilbertian}
A strictly $\theta$-Hilbertian space is a Banach space that can be written as an interpolation space $(X_0,X_1)_{\theta}$, where $X_1$ is a Hilbert space and $\theta \in (0,1]$ (see \cite{MR555306}). $L^p$-spaces are clearly strictly $\theta$-Hilbertian: If $X$ is an $L^p$-space with $p \geq 2$, then $X=(X_0,X_1)_{\theta}$, where $X_0=L^\infty$, $X_1 = L^2$ and $\theta = \frac{2}{p}$.

In the setting of strictly $\theta$-Hilbertian spaces, we can derive \eqref{eq:p-uniform_convexity} with an explicit constant $C$.
\begin{prp} \label{prp:p-uniform_constant_for_theta_hilbertian}
  If $X$ is isometric to a subquotient of a strictly $\theta$-Hilbertian space, then \eqref{eq:p-uniform_convexity} holds with $C=4^{1-\frac{1}{\theta}}$. 
\end{prp}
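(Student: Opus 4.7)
The plan is to prove this by complex interpolation, applying Stein's theorem as recalled in Section \ref{subsection:complex_interpolation} between the trivial bound on the ambient Banach space $X_0$ at $\theta = 0$ and the sharp Pythagorean identity on the Hilbert space $X_1$ at $\theta = 1$.

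First I would reduce to the case where $X = (X_0, X_1)_\theta$ itself. Passage to subspaces is trivial: if $X \subseteq Y$ isometrically and $U$ is $X$-valued, the inequality in $Y$ applied to $U$ gives the inequality in $X$ verbatim. For quotients $X = Y/Z$ one argues by approximation: given a finitely-valued $X$-valued random variable $U$ with values $u_i$, lift to a $Y$-valued $\tilde U$ with $\|\tilde u_i\|_Y \leq (1+\varepsilon)\|u_i\|_X$. Since the quotient map is linear and contractive, $\|\mathbb{E}[U]\|_X \leq \|\mathbb{E}[\tilde U]\|_Y$ and $\|u_i - \mathbb{E}[U]\|_X \leq \|\tilde u_i - \mathbb{E}[\tilde U]\|_Y$, so the $Y$-inequality pushes down to $X$ with a multiplicative loss $(1+\varepsilon)^p$ on the right; sending $\varepsilon \to 0$ concludes the reduction.

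For the main step, given a finitely-supported probability measure $\mu$, I would introduce the compatible couples $Y_j = L^{p_j}(\mu;X_j)$ and $Z_j = X_j \oplus_{p_j} L^{p_j}(\mu;X_j)$ for $j=0,1$, with $p_0 = \infty$ and $p_1 = 2$, so that $Y_\theta = L^p(\mu;X)$ and $Z_\theta = X \oplus_p L^p(\mu;X)$ for $p = 2/\theta$. The key is to consider the holomorphic family
\[
  T_z(U) = \bigl(\mathbb{E}[U],\, 2^{z-1}(U - \mathbb{E}[U])\bigr).
\]
On $\mathrm{Re}(z) = 0$, one has $|2^{z-1}| = \tfrac{1}{2}$, and combined with $\|\mathbb{E}[U]\|_{X_0} \leq \|U\|_{L^\infty(\mu;X_0)}$ and the triangle inequality $\|U - \mathbb{E}[U]\|_{L^\infty(\mu;X_0)} \leq 2\|U\|_{L^\infty(\mu;X_0)}$ this yields $\|T_z\|_{Y_0 \to Z_0} \leq 1$. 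On $\mathrm{Re}(z) = 1$, one has $|2^{z-1}| = 1$, and Pythagoras in the Hilbert space $L^2(\mu;X_1)$ gives $\|\mathbb{E}[U]\|_{X_1}^2 + \|U - \mathbb{E}[U]\|_{L^2(\mu;X_1)}^2 = \|U\|_{L^2(\mu;X_1)}^2$, whence $\|T_z\|_{Y_1 \to Z_1} \leq 1$.

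Stein's theorem then yields $\|T_\theta\|_{Y_\theta \to Z_\theta} \leq 1$, i.e.
\[
  \|\mathbb{E}[U]\|_X^p + 2^{(\theta-1)p}\,\mathbb{E}\bigl[\|U - \mathbb{E}[U]\|_X^p\bigr] \leq \mathbb{E}\bigl[\|U\|_X^p\bigr],
\]
and substituting $p = 2/\theta$ turns the prefactor into $2^{(\theta-1)\cdot 2/\theta} = 4^{1-1/\theta}$, exactly the claimed constant; a standard approximation extends this from finitely-valued to arbitrary $X$-valued random variables. The only real obstacle is finding the correct holomorphic weight $2^{z-1}$: the factor $\tfrac{1}{2}$ on the left edge is forced by the $2$ lost to the $L^\infty$-triangle inequality, the factor $1$ on the right edge is optimal in the Hilbertian case, and the target constant $4^{1-1/\theta}$ is precisely the geometric mean of these two boundary weights produced by Stein interpolation.
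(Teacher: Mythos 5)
Your proof is correct and follows essentially the same route as the paper: you reduce to the case where $X$ itself is the interpolation space $(X_0,X_1)_\theta$, and you apply Stein interpolation to the same holomorphic family $T_z(U) = (\mathbb{E}[U],\,2^{z-1}(U-\mathbb{E}[U]))$, with the same boundary estimates on $\mathrm{Re}(z)\in\{0,1\}$ and the same resulting constant $2^{(\theta-1)p_\theta} = 4^{1-1/\theta}$ at $p_\theta = 2/\theta$. Your treatment of the reduction to subquotients (trivial for subspaces, $(1+\varepsilon)$-lift for quotients) and your explicit verification of the two boundary norm bounds are a little more detailed than the paper, which simply asserts both, but the argument is the one the authors use.
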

\begin{proof}
  If $Y$ is a subquotient of $X$, then the best constant in \eqref{eq:p-uniform_convexity} is smaller for $Y$ than for $X$. Therefore, it is sufficient to consider the case when $X$ is strictly $\theta$-Hilbertian. Consider a complex interpolation space $X = (X_0,X_1)_\theta$ between a Hilbert space $X_1$ and an arbitrary Banach space $X_0$, continuously embedded into the same Banach space $\mathcal{X}$. Fix a probability space $(\Omega,\mu)$, and consider the holomorphic family $T_z \colon U\in L^1(\Omega;\mathcal{X}) \mapsto (\mathbb E[U], 2^{z-1} (U-\mathbb E[U])) \in \mathcal{X} \oplus L^1(\Omega;\mathcal{X})$. If $\mathrm{Re}(z)=0$, then we have $\| T_z \colon L^\infty(\Omega;X_0) \to X_0 \oplus_\infty L^\infty(\Omega;X_0)\| \leq 1 $, and for $\mathrm{Re}(z)=1$, we have $\| T_z \colon L^2(\Omega;X_1) \to X_1 \oplus_2 L^2(\Omega;X_1)\| \leq 1$. By the results recalled in Section \ref{subsection:complex_interpolation}, the complex interpolation space of parameter $\theta$ between $L^\infty(\Omega;X_0)$ and $L^2(\Omega;X_1)$ (respectively $X_0 \oplus_{\infty} L^{\infty}(\Omega;X_0)$ and $ X_1 \oplus_2 L^2(\Omega;X_1)$) is $L^{p_\theta}(\Omega;X_\theta)$ (respectively $ X_\theta \oplus_{p_\theta} L^{p_\theta}(\Omega;X_\theta)$), where $\frac{1}{p_\theta} = \frac{\theta}{2}$. By Stein's interpolation theorem, we have $\|T_\theta\colon L^{p_\theta}(\Omega;X_\theta) \to X_\theta \oplus_{p_\theta} L^{p_\theta}(\Omega;X_\theta)\| \leq 1$. This is exactly \eqref{eq:p-uniform_convexity} with constant $C=\frac{1}{2^{p_\theta-2}} = 4^{1-\frac 1 \theta}$.
\end{proof}
More generally, one can consider the class of $\theta$-Hilbertian spaces, as introduced by Pisier in \cite{MR2732331}, which is a natural class of Banach spaces that includes the strictly $\theta$-Hilbertian spaces, but also certain interpolation spaces between compatible families (rather than couples) of Banach spaces as well as there ultraproducts. Every result that we mention for strictly $\theta$-Hilbertian spaces can be extended to the class of $\theta$-Hilbertian spaces by considering complex interpolation for families of Banach spaces.

\subsection{Uniform curvedness}\label{subsection:unif_curved}
The notion of uniformly curved Banach space was introduced by Pisier in \cite{MR2732331}. Let $X$ be a Banach space, and let $T\colon L^2(\Omega_1,\mu_1) \to L^2(\Omega_2,\mu_2)$ an operator. If $T \otimes \mathrm{id}_X$ extends to a bounded operator $T_X$ from $L^2(\Omega_1,\mu_1;X)$ to $L^2(\Omega_2,\mu_2;X)$, then we denote by $\|T_X\|$ its norm. Otherwise, we set $\|T_X\|=\infty$. For a Banach space $X$, we set $\Delta_X(\varepsilon)=\sup \|T_X\|$, where the supremum is taken over all measure spaces $(\Omega_1,\mu_1)$ and $(\Omega_2,\mu_2)$ and operators ${T:L^2(\Omega_1,\mu_1) \to L^2(\Omega_2,\mu_2)}$ satisfying $\|T:L^1(\Omega_1,\mu_1) \to L^1(\Omega_2,\mu_2)\| \leq 1$, $\|T:L^{\infty}(\Omega_1,\mu_1) \to L^{\infty}(\Omega_2,\mu_2)\| \leq 1$ and $\|T:L^2(\Omega_1,\mu_1) \to L^2(\Omega_2,\mu_2)\| \leq \varepsilon$.

\begin{dfn}
  A Banach space $X$ is uniformly curved if $\Delta_X(\varepsilon) \to 0$ when $\varepsilon \to 0$.
\end{dfn}
Pisier proved that uniformly curved spaces are superreflexive \cite{MR2732331}, and hence, by the results recalled in Section \ref{subsec:sruc}, every uniformly curved space has an equivalent $p$-uniformly convex norm for some $p \in [2,\infty)$. Pisier also showed that the Banach spaces $X$ for which $\Delta_X(\varepsilon) = \cO(\varepsilon^\alpha)$ for some $\alpha>0$ are exactly the spaces that are isomorphic to a subquotient of a $\theta$-Hilbertian space for some $\theta>0$.

\section{Graphs, eigenvalues and Poincar\'e inequalities} \label{sec:graphs}
\subsection{$p$-Poincar\'e inequalities}
In this article, we consider unoriented connected graphs, potentially weighted. Thus, a graph is a pair $\cG=(V,\omega)$, where $V$ is a set (the vertices) and $\omega \colon V \times V \to \mathbb{R}_{+}$ is a function (the weight function) such that $\omega(s,t)=\omega(t,s)$ for every $s,t \in V$. A finite graph is a graph for which $V$ is finite. Unweighted graphs correspond to the case when $\omega$ takes values in $\{0,1\}$, in which case $\omega$ is the indicator function of the edge set. The degree of a vertex $s$ in a graph $\mathcal{G}=(V,\omega)$ is defined as the number $d_{\omega}(s)=\sum_{t \in V} \omega(s,t)$.

Let $\mathcal{G}=(V,\omega)$ be a finite graph. Equip $V \times V$ with the probability measure $\mathbb{P}(s,t) = \frac{\omega(s,t)}{\sum_{s',t' \in V} \omega(s',t')}$ and $V$ with the probability measure $\nu(s) = \frac{d_{\omega}(s)}{\sum_t d_{\omega}(t)}$. Note that $\nu$ is the stationary probability measure for the random walk on $\mathcal{G}$ with transition probability $p(s\to t) = \frac{\omega(s,t)}{d_\omega(s)}$. It is also the pushforward measure of $\mathbb{P}$ under both maps $(s,t) \mapsto s$ and $(s,t)\mapsto t$.

The gradient $\nabla f:V \times V \to X$ of a function $f:V \to X$ is defined by $(\nabla f)(e)=f(t)-f(s)$ if $e=(s,t)$. 
\begin{dfn}
  Let $\cG=(V,\omega)$ be a connected finite graph, and let $1 < p < \infty$. For a Banach space $X$, we denote by $\pi_{p,\mathcal{G}}(X)$ the smallest real number $\pi$ such that for all $f \colon V\to X$, the inequality
\[
  \inf_{x \in X} \|f-x\|_{L^p(V,\nu;X)} \leq \pi \|\nabla f\|_{L^p(V \times V,\mathbb{P};X)}
\]
holds. We call $\pi_{p,\mathcal{G}}(X)$ the $X$-valued $p$-Poincar\'e constant of $\mathcal{G}$.
\end{dfn}
Let $(Z_0,Z_1,\dots)$ be the random walk on $\mathcal{G}$ with $Z_0$ (and hence $Z_n$ for all $n \geq 0$) distributed as $\nu$. In this setting, the $X$-valued $p$-Poincar\'e constant of $\mathcal{G}$ is the smallest real number $\pi$ such that for all $f \colon V\to X$, the following inequality holds:
\[
  \inf_{x \in X} \mathbb{E}[\|f(Z_0)-x\|^p] \leq \pi^p \mathbb{E}[\|f(Z_0) - f(Z_1)\|^p].
\]
\begin{rem}
  The validity of the above inequality for every $f$ with $\mathbb E[\|f(Z_0)\|^p]<\infty$ can be used to define the $X$-valued $p$-Poincaré constant of an arbitrary irreducible Markov chain $(Z_0,Z_1,\dots)$ on a set $V$ with $\sigma$-finite stationary measure $\nu$. In the case when the measure $\nu$ is infinite (the terminology is that the Markov chain is not positively recurrent), the definition of $p$-Poincar\'e constant becomes simpler: It is the smallest $\pi$ such that for every $f \in L^p(V,\nu;X)$,
\[
  \mathbb{E}[\|f(Z_0)\|^p] = \int \|f\|_X^p d\nu \leq \pi^p \mathbb{E}[\|f(Z_0) - f(Z_1)\|^p].
\]
All results in this section hold in this generality, except for the interpretation in terms of the eigenvalues of ($p$-)Laplacians, where one needs reversibility of the Markov chain. The only adaptation in the proof of Theorem \ref{thm:improvement_triangle_inequality} when $\nu$ is infinite is that in that case, $L^p_0(V,\nu;X)$ is replaced by $L^p(V,\nu;X)$.
\end{rem}

Let us point out that our $p$-Poincar\'e constant differs (by a factor or power) from the $p$-Poincar\'e constants in \cite{MR2978328} and \cite{MR3323202}, neither does it exactly coincide with the conventions of \cite{MR3210176}.

Let $\mathcal{G}=(V,\omega)$ be a connected finite graph with $|V|=n$. We denote by $A_{\cG}$ (or simply $A$ if no confusion should arise) the Markov operator of the random walk on $\mathcal{G}$, which acts on functions on $V$ by the formula
\[
  A_\cG f(s) = \frac{1}{d_{\omega}(s)} \sum_{t \in V} \omega(s,t)f(t) = \mathbb E[f(Z_1) \mid Z_0=s].
\]
The operator $A_\cG$ is self-adjoint on $L^2(V,\nu)$. Indeed for $f,g \in L_2(V,\nu)$, we have
\begin{equation} \label{eq:bilinear_A_w}
  \langle A_\cG f,g \rangle = \frac{1}{\sum_{u \in V} d_{\omega}(u)}\sum_{s,t \in V} f(s) \omega(s,t) \overline{g(t)}.
\end{equation}
We denote by $\|A_\cG^0\|$ the norm of the restriction of $A_\cG$ to the orthogonal complement of the constant functions in $L^2(V,\nu)$. We denote the eigenvalues of $A_\cG$ by $\mu_1(A_\cG) \geq \ldots \geq \mu_n(A_\cG)$. The largest eigenvalue is $1$.

The (normalized) Laplacian on $\mathcal{G}$ is the operator $\Delta_{2,\omega} = \mathrm{Id}-A_\cG$, which maps a function $f$ to
\[
  \Delta_{2,\omega}f(s) = f(s) - \frac{1}{d_{\omega}(s)} \sum_{t \in V} \omega(s,t)f(t) = \mathbb E[f(Z_0)-f(Z_1) \mid Z_0=s].
\]
The following result summarizes some elementary properties of the $p$-Poincar\'e constant.
\begin{prp}\label{prp:basic_properties_of_Poincare} For every connected finite graph $\mathcal{G}=(V,\omega)$ and $1 < p < \infty$,
  \begin{enumerate}[(i)]
  \item $\pi_{p,\mathcal{G}}(X)  \geq \frac 1 2$ for every $X$ if $\mathcal{G}$ has at least two vertices,
  \item $\pi_{p,\mathcal{G}}(L^p) = \pi_{p,\mathcal{G}}(\C)$,
  \item $\pi_{2,\mathcal{G}}(\C)$ is equal to $\frac{1}{\sqrt{2-2\mu_2(A)}}$.
    \end{enumerate}
  \end{prp}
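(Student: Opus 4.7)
For (i), I would use the pointwise triangle inequality $\|f(t)-f(s)\|_X \leq \|f(t)-x\|_X + \|f(s)-x\|_X$ followed by Minkowski in $L^p(V\times V,\mathbb{P};X)$. The key point is that both marginals of $\mathbb{P}$ on $V\times V$ equal $\nu$, so the two resulting terms each equal $\|f-x\|_{L^p(V,\nu;X)}$. This gives $\|\nabla f\|_{L^p(\mathbb{P};X)}\leq 2\|f-x\|_{L^p(\nu;X)}$ for every $x\in X$; taking the infimum over $x$ yields $\inf_x\|f-x\|_{L^p(\nu;X)} \geq \tfrac12\|\nabla f\|_{L^p(\mathbb{P};X)}$, hence $\pi_{p,\mathcal{G}}(X)\geq \tfrac12$ as soon as a non-constant $f$ exists (which it does once $|V|\geq 2$, using connectedness of $\mathcal{G}$ to ensure $\|\nabla f\|_{L^p(\mathbb{P};X)}>0$).

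For (ii), I would use the Fubini identification $L^p(V,\nu;L^p(\Omega,\mu))=L^p(V\times\Omega,\nu\otimes\mu)$. Viewing $f:V\to L^p(\Omega,\mu)$ as $f(s,\omega)$, the infimum over $x\in L^p(\Omega,\mu)$ decouples fiberwise in $\omega$:
\[
\inf_{x\in L^p(\mu)}\|f-x\|_{L^p(\nu;L^p(\mu))}^p = \int_\Omega \inf_{c\in\mathbb{C}} \|f(\cdot,\omega)-c\|_{L^p(\nu)}^p\,d\mu(\omega).
\]
Applying the scalar Poincar\'e inequality fiberwise and using the same Fubini identity for the gradient yields $\pi_{p,\mathcal{G}}(L^p)\leq\pi_{p,\mathcal{G}}(\mathbb{C})$. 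The reverse inequality is obtained by testing on rank-one functions $f(s)=g(s)\phi$ with $g:V\to\mathbb{C}$ and a fixed nonzero $\phi\in L^p(\Omega,\mu)$: both sides of the Poincar\'e inequality scale by $\|\phi\|_{L^p(\mu)}$ and reduce to the scalar version for $g$.

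For (iii), I work directly in the Hilbert space $L^2(V,\nu)$. The optimal translate is $x^* = \mathbb{E}_\nu[f]$, so $\inf_x\|f-x\|_{L^2(\nu)}^2 = \|f-\mathbb{E}_\nu[f]\|_{L^2(\nu)}^2$ equals the squared norm of the projection of $f$ onto $L^2_0(V,\nu)$. Expanding $\|\nabla f\|_{L^2(\mathbb{P})}^2 = \mathbb{E}[|f(Z_1)-f(Z_0)|^2]$ via \eqref{eq:bilinear_A_w} and the self-adjointness of $A_\mathcal{G}$ on $L^2(V,\nu)$ gives $\|\nabla f\|_{L^2(\mathbb{P})}^2 = 2\langle f,(I-A_\mathcal{G})f\rangle_{L^2(\nu)}$. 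Restricting to $f\in L^2_0(V,\nu)$, the squared Poincar\'e ratio becomes $\|f\|^2/\bigl(2\langle f,(I-A_\mathcal{G})f\rangle\bigr)$; by the spectral theorem applied to the self-adjoint restriction $A_\mathcal{G}|_{L^2_0}$, its supremum is $1/(2-2\mu_2(A_\mathcal{G}))$, yielding the stated value.

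None of the three parts presents a real obstacle. The only point requiring a little care is the Fubini swap of $\inf$ and $\int$ in (ii), where measurability in $\omega$ of the pointwise optimizer must be verified --- this is standard, and in fact follows from the observation that the scalar optimizer depends continuously on $f(\cdot,\omega)\in L^p(V,\nu)$.
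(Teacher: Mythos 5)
Your proof is correct and follows essentially the same route as the paper's (very terse) proof: the triangle inequality for (i), Fubini's theorem for (ii), and the identity $\mathbb{E}\big[|f(Z_0)-f(Z_1)|^2\big]=2\langle(I-A)f,f\rangle$ together with the spectral theorem for (iii). The only addition worth noting is that in (ii) you don't really need the separate rank-one test for the reverse inequality --- the Fubini decoupling already gives the \emph{equality} of the two Poincar\'e ratios fibrewise, and your observation that the scalar optimizer is unique (by strict convexity of $c\mapsto\sum_s\nu(s)|f(s,\omega)-c|^p$ for $1<p<\infty$) and depends continuously on the finite vector $(f(s,\omega))_{s\in V}$ is exactly what settles the measurability of the optimal selection $\omega\mapsto c(\omega)$.
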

  \begin{proof}
We always have $\|\nabla f\|_{L^p(V \times V,\mathbb P;X)} \leq 2 \inf_{x \in X} \|f-x\|_{L^p(V,\nu;X)}$ by the triangle inequality, and therefore $\pi_{p,\mathcal{G}}(X) \geq \frac{1}{2}$ if $\mathcal{G}$ has at least two vertices. Assertion (ii) follows from Fubini's theorem. Assertion (iii) is classical and follows from the equality $\mathbb E[\|f(Z_0) - f(Z_1)\|^2] = 2 \langle(1-A) f,f\rangle$, which holds for all $f \in L^2(V,\nu)$.
\end{proof}

For $1 < p < \infty$, the constant $\pi_{p,\mathcal{G}}(\C)$ is related to the eigenvalues of the $p$-Laplacian (see Section \ref{subsec:p_laplacian}).
  
\subsection{From small Markov operators to Poincar\'e inequalities} \label{subsec:markovpoincare}
The validity of a $p$-Poincar\'e inequality is a very robust property (see \cite{MR3356758,MR3208067}). Indeed, it is obvious that if a Banach space $X$ is at Banach-Mazur distance $C$ from another Banach space $Y$, then $\pi_{p,\mathcal{G}}(X)  \leq C \pi_{p,\mathcal{G}}(Y)$. Also, the $p$-Poincar\'e inequality implies the validity of a $q$-Poincar\'e inequality for all $q<\infty$ (see Section \ref{subsec:matousek}).

For applications to fixed point properties, the crucial point is to prove that $\pi_{p,\mathcal{G}}(X)<1$. The aforementioned fact is therefore not useful, because the property ``$\pi_{p,\mathcal{G}}(X)<1$'' is not robust. The next result, which is one of the main points in this article, expresses that the property ``$\pi_{p,\mathcal{G}}(X)<1$'' is a consequence of another property, which is actually very robust.

In what follows, $L^p_0(V,\nu;X) = \{f \in L^p(V,\nu;X) \mid \mathbb{E}[f(Z_0)]=0\}$.
\begin{thm} \label{thm:improvement_triangle_inequality}
Let $X$ be a $p$-uniformly convex Banach space. Then there exist $\varepsilon,\delta>0$ (depending on $X$) such that for every connected finite graph $\mathcal{G}=(V,\omega)$ the following holds: If $\|A_\cG\|_{B(L^p_0(V,\nu;X))} \leq \varepsilon$, then for every $f \in L^p_0(V,\nu;X)$, we have
\[
  \|f\|_{L^p_0(V,\nu;X)} \leq (1-\delta)\|\nabla f\|_{L^p(V \times V,\mathbb{P};X)}.
\]
In particular, $\|A_\cG\|_{B(L^p_0(V,\nu;X))} \leq \varepsilon \implies \pi_{p,\mathcal G}(X)\leq 1-\delta$.
\end{thm}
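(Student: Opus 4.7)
The plan is to combine the $p$-uniform convexity inequality \eqref{eq:p-uniform_convexity} with the spectral hypothesis, applied to a judiciously chosen conditional random variable. Fix $f \colon V \to X$. For each vertex $s$, consider the $X$-valued random variable $U_s = f(Z_1) - f(s)$ under the conditional law of the walk given $Z_0 = s$. Then $\mathbb{E}[U_s] = A f(s) - f(s)$ and $U_s - \mathbb{E}[U_s] = f(Z_1) - A f(s)$, so applying \eqref{eq:p-uniform_convexity} vertex by vertex and integrating against $\nu(s)$ (invoking stationarity to rewrite the right-hand side as $\|\nabla f\|^p$) yields the key identity
\[
  \|(I-A_\mathcal{G})f\|_{L^p(V,\nu;X)}^p + C\,\mathbb{E}\bigl[\|f(Z_1)-A_\mathcal{G} f(Z_0)\|^p\bigr] \leq \|\nabla f\|_{L^p(V\times V,\mathbb{P};X)}^p.
\]

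Next, I would assume $f \in L^p_0(V,\nu;X)$, in which case $A_\mathcal{G} f \in L^p_0(V,\nu;X)$ as well by stationarity of $\nu$, so the hypothesis gives $\|A_\mathcal{G} f\|_{L^p(V,\nu;X)} \leq \varepsilon \|f\|_{L^p(V,\nu;X)}$. Two applications of the triangle inequality then bound both terms on the left from below by a constant multiple of $\|f\|_{L^p}$: namely $\|(I-A_\mathcal{G})f\|_{L^p} \geq \|f\|_{L^p} - \|A_\mathcal{G} f\|_{L^p} \geq (1-\varepsilon)\|f\|_{L^p}$, and, using that both $Z_0$ and $Z_1$ are distributed as $\nu$,
\[
  \|f(Z_1) - A_\mathcal{G} f(Z_0)\|_{L^p(\mathbb{P})} \geq \|f(Z_1)\|_{L^p(\mathbb{P})} - \|A_\mathcal{G} f(Z_0)\|_{L^p(\mathbb{P})} \geq (1-\varepsilon)\|f\|_{L^p(\nu)}.
\]
Combining these estimates produces $(1+C)(1-\varepsilon)^p \|f\|_{L^p}^p \leq \|\nabla f\|_{L^p}^p$, whence
\[
  \|f\|_{L^p(V,\nu;X)} \leq \frac{1}{(1+C)^{1/p}(1-\varepsilon)}\,\|\nabla f\|_{L^p(V\times V,\mathbb{P};X)}.
\]
The result then follows by choosing $\varepsilon$ strictly smaller than $1 - (1+C)^{-1/p}$, producing a $\delta > 0$ that depends only on $p$ and $C$, i.e.~only on $X$.

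The main obstacle is conceptual rather than technical: one has to recognize that $p$-uniform convexity applied to $U_s = f(Z_1) - f(s)$ is what couples the left- and right-hand sides, since a naive application to $U = f(Z_1)$ produces $\|A_\mathcal{G} f\|^p + C\|f - A_\mathcal{G} f\|^p \leq \|f\|^p$, which does not involve $\nabla f$ at all. The centring by $f(s)$ is precisely what extracts the $(1+C)^{1/p}$ gain needed to upgrade a quantitative spectral bound into a strict Poincaré inequality. Quantitatively, since the constant $C$ in \eqref{eq:p-uniform_convexity} may itself be small (for strictly $\theta$-Hilbertian spaces, $C = 4^{1-1/\theta}$ by Proposition \ref{prp:p-uniform_constant_for_theta_hilbertian}), the admissible window $1-(1+C)^{-1/p}$ for $\varepsilon$ shrinks accordingly, but the argument always produces some strictly positive $\varepsilon$ and $\delta$, with explicit values whenever $C$ is explicit.
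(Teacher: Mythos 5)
Your proof is correct and follows essentially the same route as the paper's: applying the $p$-uniform convexity inequality \eqref{eq:p-uniform_convexity} conditionally on $Z_0$ to the increment $f(Z_1)-f(Z_0)$, integrating over $\nu$, and then lower-bounding both $\|(I-A_\cG)f\|_{L^p(\nu;X)}$ and $(\mathbb{E}\|f(Z_1)-A_\cG f(Z_0)\|^p)^{1/p}$ by $(1-\varepsilon)\|f\|_{L^p(\nu;X)}$ is exactly the argument in the paper, with the same final constant $(1+C)^{-1/p}(1-\varepsilon)^{-1}$. The one cosmetic difference is that you obtain $\|(I-A_\cG)f\|_p \geq (1-\varepsilon)\|f\|_p$ directly from the triangle inequality where the paper invokes the Neumann series $(I-A_\cG)^{-1}=\sum_{n\ge 0}A_\cG^n$; both give the identical bound, so this is a harmless and slightly more elementary variant.
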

In the random walk notation, we have to prove that
\[
  \mathbb{E}[\|f(Z_0)\|^p] \leq (1-\delta)^p\;\mathbb{E}[\| f(Z_0)-f(Z_1)\|^p]
\]
for every $f \colon V \to X$ with $\mathbb{E}[f(Z_0)]=0$.\\

The proof is divided in several steps. The first one is standard.
\begin{lem}
  For every $f \in L^p_0(V,\nu;X)$, we have $\|f\|_p \leq (1-\varepsilon)^{-1} \|f-A_{\cG}f\|_p$.
\end{lem}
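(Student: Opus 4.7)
The plan is to execute the standard one-line argument via the triangle inequality and the operator-norm hypothesis. First I would observe that $A_{\mathcal{G}}$ preserves the subspace $L^p_0(V,\nu;X)$ of mean-zero functions. This follows from the stationarity of $\nu$: for any $f$,
\[
\mathbb{E}[A_{\mathcal{G}}f(Z_0)] = \sum_{s}\nu(s)\sum_{t} p(s\to t) f(t) = \sum_{t} f(t)\sum_{s}\nu(s)p(s\to t) = \sum_{t}\nu(t)f(t) = \mathbb{E}[f(Z_0)],
\]
since $\nu$ is the stationary measure for the transition kernel $p(s\to t)=\omega(s,t)/d_{\omega}(s)$. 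Hence if $f \in L^p_0(V,\nu;X)$, then so is $A_{\mathcal{G}}f$, and the hypothesis $\|A_{\mathcal{G}}\|_{B(L^p_0(V,\nu;X))}\le\varepsilon$ gives $\|A_{\mathcal{G}}f\|_p \le \varepsilon\|f\|_p$.

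Second, writing $f = (f - A_{\mathcal{G}}f) + A_{\mathcal{G}}f$ and applying the triangle inequality in $L^p(V,\nu;X)$ yields
\[
\|f\|_p \;\le\; \|f - A_{\mathcal{G}}f\|_p + \|A_{\mathcal{G}}f\|_p \;\le\; \|f - A_{\mathcal{G}}f\|_p + \varepsilon\|f\|_p.
\]
Since we are implicitly assuming $\varepsilon < 1$ (this is the regime of interest, and it is guaranteed by any later quantitative choice of $\varepsilon$), we may rearrange to obtain
\[
(1-\varepsilon)\|f\|_p \;\le\; \|f - A_{\mathcal{G}}f\|_p,
\]
which is the desired inequality after dividing by $1-\varepsilon$.

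There is no genuine obstacle here: the lemma is the soft ``spectral gap in operator norm $\Rightarrow$ resolvent bound'' step that precedes the uniform-convexity argument. The substance of the theorem will lie in the subsequent lemmas, where one has to upgrade $\|f-A_{\mathcal{G}}f\|_p$ (a pointwise expectation of $f(Z_0)-\mathbb{E}[f(Z_1)\mid Z_0]$) to the gradient norm $\|\nabla f\|_p = \mathbb{E}[\|f(Z_0)-f(Z_1)\|^p]^{1/p}$, and this is exactly where the $p$-uniform convexity inequality \eqref{eq:p-uniform_convexity} applied conditionally on $Z_0$ will come into play in order to gain the extra factor $\delta$.
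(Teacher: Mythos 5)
Your proof is correct and is essentially the same as the paper's: both hinge on the hypothesis $\|A_{\cG}\|_{B(L^p_0(V,\nu;X))}\le\varepsilon<1$, the paper phrasing it as a Neumann-series bound on $(1-A_\cG)^{-1}$ while you rearrange the triangle inequality directly, which is the same estimate. Your explicit check that $A_{\cG}$ preserves $L^p_0(V,\nu;X)$ (via stationarity of $\nu$) is a worthwhile detail that the paper leaves implicit.
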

\begin{proof}
  If $A_\cG$ has norm $\leq \varepsilon$, then $(1-A_\cG)^{-1} = \sum_{n \geq 0} A_\cG^n$ has norm $\leq (1-\varepsilon)^{-1}$.
\end{proof}
The triangle inequality implies, without any condition on $X$, that $\|f - A_{\cG}f\|_p \leq \|\nabla f\|_p$, and hence $\|f\|_p \leq \frac{1}{1-\varepsilon} \|\nabla f\|_p$. This is, however, not strong enough. The next lemma improves this inequality.

Recall that since $X$ is $p$-uniformly convex, there exists a constant $C$ such that \eqref{eq:p-uniform_convexity} holds for every $X$-valued random variable $U$.
\begin{lem}
 For every $f \in L^p_0(V,\nu;X)$, we have
\[
  \mathbb E[\| (f-A_{\cG}f)(Z_0)\|^p] \leq \mathbb E [\|f(Z_0) - f(Z_1)\|^p] - C\;\mathbb E [\|f(Z_1) - A_{\cG}f(Z_0)\|^p].
\]
\end{lem}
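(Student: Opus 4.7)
The plan is to apply the $p$-uniform convexity inequality \eqref{eq:p-uniform_convexity} conditionally, with a carefully chosen $X$-valued random variable, and then take expectation over $Z_0$. The observation that makes this work is that $A_{\cG}f(s)$ is exactly the conditional expectation $\mathbb{E}[f(Z_1) \mid Z_0 = s]$, so $(f - A_{\cG}f)(s)$ is the conditional expectation of $f(s) - f(Z_1)$ given $Z_0 = s$. This means the left-hand side of the desired inequality is naturally the $p$-th moment of a conditional expectation, which is precisely what \eqref{eq:p-uniform_convexity} controls.

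Concretely, I would fix $s \in V$ and, working under the conditional law given $Z_0 = s$, apply \eqref{eq:p-uniform_convexity} to the random variable
\[
  U := f(s) - f(Z_1).
\]
Then $\mathbb{E}[U \mid Z_0=s] = f(s) - A_{\cG}f(s) = (f-A_{\cG}f)(s)$, and consequently $U - \mathbb{E}[U \mid Z_0=s] = A_{\cG}f(s) - f(Z_1)$. The inequality \eqref{eq:p-uniform_convexity}, applied in this conditional setting, reads
\[
  \|(f-A_{\cG}f)(s)\|^p + C\,\mathbb{E}\bigl[\|A_{\cG}f(s) - f(Z_1)\|^p \,\big|\, Z_0=s\bigr] \leq \mathbb{E}\bigl[\|f(s) - f(Z_1)\|^p \,\big|\, Z_0=s\bigr].
\]

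The final step is to integrate this pointwise inequality against the stationary measure $\nu$ of $Z_0$ and use the tower property to turn the conditional expectations into unconditional ones. Since on the right-hand side $f(s) - f(Z_1)$ becomes $f(Z_0) - f(Z_1)$ after integration, and similarly $A_{\cG}f(s) - f(Z_1)$ becomes $A_{\cG}f(Z_0) - f(Z_1)$, one obtains the claim immediately. No genuine obstacle arises here: the argument is just bookkeeping once the correct random variable $U$ is identified, and the hypothesis $f \in L^p_0(V,\nu;X)$ is used only to ensure all quantities are finite (the mean-zero condition itself plays no role in this step, but it will matter in the subsequent combination with the previous lemma).
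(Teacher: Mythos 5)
Your proof is correct and is essentially identical to the paper's: both apply \eqref{eq:p-uniform_convexity} conditionally on $Z_0$ to the random variable $U = f(Z_0) - f(Z_1)$ (equivalently, to $f(s)-f(Z_1)$ for each fixed $s$), identify $\mathbb{E}[U \mid Z_0]$ with $(f-A_{\cG}f)(Z_0)$ and $U-\mathbb{E}[U\mid Z_0]$ with $A_{\cG}f(Z_0)-f(Z_1)$, and then average over $Z_0$.
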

\begin{proof}
Let $U$ be the $X$-valued random variable $f(Z_0) - f(Z_1)$. With this notation we have $\mathbb E[ U | Z_0 ] = f(Z_0) - A_{\cG}f(Z_0)$ and $U - \mathbb E[ U | Z_0] = A_{\cG}f(Z_0) - f(Z_1)$. So applying \eqref{eq:p-uniform_convexity} conditionally to $Z_0$ and then averaging with respect to $Z_0$ proves the lemma.
\end{proof}
\begin{proof}[Proof of Theorem \ref{thm:improvement_triangle_inequality}]
By the triangle inequality and the fact that $Z_1$ is distributed as $Z_0$, we have
\[
  (\mathbb E [\|f(Z_1) -A_{\cG}f(Z_0)\|^p])^{\frac 1 p} \geq \|f\|_p - \|A_{\cG}f\|_p \geq (1-\varepsilon) \|f\|_p.
\]
Taking into account the two lemmas above, we obtain
\[
  \|f\|_p^p \leq \frac{1}{(1-\varepsilon)^p}\left( \|\nabla f\|_p^p - C(1-\varepsilon)^p \|f\|_p^p\right),
\]
from which we deduce
\[
  \|f\|_p \leq \frac{1}{(1+C)^{\frac 1 p}(1-\varepsilon)} \|\nabla f\|_p.
\]
If $\varepsilon>0$ is small enough, so that $(1+C)^{\frac 1 p}(1-\varepsilon)>1$, then the theorem follows for $\delta = 1 - \frac{1}{(1+C)^{\frac 1 p}(1-\varepsilon)}$.
\end{proof}
 \begin{rem} \label{rem:constants}
  If $X$ is an $L^p$-space for some $p \geq 2$ (or more generally a subquotient of a $\theta$-Hilbertian space with $\theta = \frac 2 p$), then it follows from Proposition \ref{prp:p-uniform_constant_for_theta_hilbertian} and from the proof above that in that case, Theorem \ref{thm:improvement_triangle_inequality} holds when $(1+2^{2-p})^{\frac 1 p}(1-\varepsilon)>1$; for instance, as soon as $\varepsilon \leq \frac{2}{p 2^{p}}$.
\end{rem}
 \begin{rem}\label{rem:bipartite} Consider a bipartite connected finite graph $\mathcal{G} = (V,\omega)$. This means that $V$ can be partitioned as $V_1 \cup V_2$ in such a way that every edge connects $V_1$ to $V_2$. Equivalently, $-1$ is an eigenvalue of $A_{\cG}$. In particular, $\|A_{\cG}\|_{B(L^p_0(V,\nu;X))}=1$, so Theorem \ref{thm:improvement_triangle_inequality} does not say anything about $\mathcal{G}$. This is not an accident: If $\mathcal{G}$ is the complete bipartite graph with $2n$ vertices, in which both parts of the partition consist of $n$ vertices (so that $A_{\cG}$ has spectrum $\{-1,0,1\}$), it was shown in \cite[Proposition 11.6]{drutumackay} that $\pi_{p,\mathcal{G}}(\C)>1$ for all $n$ large enough if $p > \frac{\log 4}{\log 5 - \log 4}=6.21\ldots$. However, an easy adaptation of our proof yields (with the same $\varepsilon$ and $\delta$): If $\|A_{\cG}f\|_{L^p(V,\nu;X)} \leq \varepsilon \|f\|_{L^p(V,\nu;X)}$ for every $f \colon V \to X$ that has zero mean on $V_1$ and on $V_2$, then for every such $f$, we have
   \[
     \|f\|_{L^p(V,\nu;X)} \leq (1-\delta)\|\nabla f\|_{L^p(V \times V,\mathbb{P};X)}.
   \]
 In particular, if $\pi_{p,\mathcal{G}}^{\textrm{bipartite}}(X)$ is defined as the smallest constant $\pi$ such that the inequality
   \[
     \inf_{g \textrm{ is constant on each part}}\|f-g\|_{L^p(V,\nu;X)} \leq \pi \|\nabla f\|_{L^p(V \times V,\mathbb{P};X)}
   \]
   holds, then we have $\pi_{p,\mathcal{G}}^{\textrm{bipartite}}(X)\leq 1-\delta$.
   \end{rem}
   
\subsection{Matou{\v{s}}ek's extrapolation result} \label{subsec:matousek}
In this section, we elaborate on the robustness of the validity of Banach space valued $p$-Poincar\'e inequalities, as indicated at the beginning of Section \ref{subsec:markovpoincare}. By an argument of Matou{\v{s}}ek \cite{MR1489105} (see also \cite[Lemma 5.5]{MR2183280}), the classical $p$-Poincar\'e inequality implies the validity of a $q$-Poincar\'e inequality for all $q<\infty$, with a multiplicative loss ($\geq 4$) on the Poincar\'e constant. The following proposition provides a Banach space valued generalization of this result, which is essentially due to Cheng \cite{MR3503071} (see \cite{MR3208067,MR3356758} for related results). We state and prove a slightly different version of this result, which in particular does not depend on the maximal degree of the graph. This section is independent from and not needed in the rest of this article.
\begin{prp} \label{prp:Matousek}
For every $1\leq p,q<\infty$, there is a constant $C$ such that for every Banach space $X$ and every graph $\mathcal G$,
  \[
    \pi_{p,\mathcal{G}}(X) \leq C \pi_{q,\mathcal{G}}(X)^{\max(\frac q p,1)}.
  \] 
\end{prp}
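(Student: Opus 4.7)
The proof naturally splits at $p = q$. In both cases, I would reduce the $X$-valued statement to a scalar one by introducing $g(v) := \|f(v) - \xi^\ast\|_X$, where $\xi^\ast \in X$ approximately minimizes $\xi \mapsto \|f - \xi\|_{L^p(V,\nu;X)}$; the reverse triangle inequality yields $|\nabla g| \leq \|\nabla f\|_X$ pointwise on $V \times V$, and since $\mathbb{R}$ embeds isometrically into $X$ one has $\pi_{q,\mathcal{G}}(\mathbb{R}) \leq \pi_{q,\mathcal{G}}(X)$. Setting $M := \inf_\xi \|f - \xi\|_{L^p} = \|g\|_{L^p(V,\nu)}$, it then suffices to bound $M$ at the scalar level by the appropriate power of $\pi_{q,\mathcal{G}}(X)$ times $\|\nabla g\|_{L^p(V \times V,\mathbb{P})}$.

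For the main case $p \geq q$, I would run a Mato\v{u}sek-type layer-cake argument on $g$. Let $T$ be a median of $g$ and set $h := (g - T)_+$, so $h \geq 0$ and $\nu(\mathrm{supp}\, h) \leq 1/2$. For each level $t > 0$, apply the scalar $q$-Poincar\'e inequality to the $1/t$-Lipschitz cutoff $\chi_t := \min(1, (h-t)_+/t)$. The measure restriction $\nu(\mathrm{supp}\, \chi_t) \leq 1/2$ forces $\inf_c \|\chi_t - c\|_{L^q} \geq (1 + 2^{1/q})^{-1}\|\chi_t\|_{L^q}$ (a short convexity argument splitting on the sign of the optimal $c$), while the gradient estimate $|\nabla \chi_t| \leq |\nabla h|/t$ is supported on the annular set $E_t := \{(s,r) : \min(h(s),h(r)) \leq t \leq \max(h(s),h(r))\}$. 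These combine to yield $\nu(h > 2t) \lesssim \pi_{q,\mathcal{G}}(X)^q t^{-q}\int_{E_t} \|\nabla f\|^q d\mathbb{P}$. Multiplying by $p t^{p-1}$ and integrating over $t$, the layer-cake identity together with Fubini (the $t$-integration for fixed $(s,r)$ is over an interval of length comparable to $\max(h(s),h(r))$) gives $\|h\|_{L^p}^p \lesssim \pi_{q,\mathcal{G}}(X)^q \int \max(h(s),h(r))^{p-q}\|\nabla f\|^q d\mathbb{P}$. Hölder's inequality with conjugates $p/(p-q)$ and $p/q$ then converts this into the self-bootstrapping estimate $\|h\|_{L^p}^p \lesssim \pi_{q,\mathcal{G}}(X)^q \|h\|_{L^p}^{p-q} \|\nabla f\|_{L^p}^q$, whence $\|h\|_{L^p} \lesssim \pi_{q,\mathcal{G}}(X)\|\nabla f\|_{L^p}$. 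The remaining contribution $\|g\wedge T\|_{L^p} \leq T \leq 2^{1/p} M$ (by Markov's inequality applied at the median) can be absorbed on the left using the optimality of $\xi^\ast$: replacing $\xi^\ast$ by $\xi^\ast + Tu$ for an appropriate unit vector $u$ reduces the problem to bounding $\|h\|_{L^p}$ alone.

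For the softer case $p < q$, I would truncate $f - \xi^\ast$ radially at height $R$ to obtain $f_R$ with $\|\nabla f_R\|_\infty \leq 2R$. Then Hölder yields $\|\nabla f_R\|_{L^q} \leq (2R)^{1-p/q}\|\nabla f_R\|_{L^p}^{p/q}$, so the $q$-Poincar\'e inequality for $f_R$ combined with Jensen's inequality $\|\cdot\|_{L^p(\nu)} \leq \|\cdot\|_{L^q(\nu)}$ (valid because $\nu$ is a probability measure) gives $\inf_\xi \|f_R - \xi\|_{L^p} \lesssim \pi_{q,\mathcal{G}}(X)(2R)^{1-p/q}\|\nabla f\|_{L^p}^{p/q}$. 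Controlling the truncation tail $\|f - f_R\|_{L^p}$ by layer-cake against $M$ and optimizing in $R$ produces the desired bound $M \lesssim \pi_{q,\mathcal{G}}(X)^{q/p}\|\nabla f\|_{L^p}$.

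The main technical obstacle is to carry out the layer-cake-plus-Fubini calculation of the first case with constants depending only on $p$ and $q$: crucially, the maximum degree of $\mathcal{G}$ must not enter, which is precisely the promised improvement over the original scalar bound of \cite{MR1489105} and its subsequent generalizations \cite{MR3503071,MR3208067,MR3356758}. This degree independence comes from normalizing $\nu$ on $V$ and $\mathbb{P}$ on $V \times V$ as probability measures throughout, so that the measure ratio between super-level sets and the gradient sets $E_t$ does not accumulate degree factors when interchanged.
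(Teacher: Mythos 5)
Your reduction to the scalar function $g(v)=\|f(v)-\xi^\ast\|_X$ is the fatal step. If it worked, it would prove the stronger statement $\pi_{p,\mathcal{G}}(X)\leq C\,\pi_{q,\mathcal{G}}(\R)^{\max(q/p,1)}$, since the only Poincar\'e input you ever use is the scalar one (you explicitly note $\pi_{q,\mathcal{G}}(\R)\leq\pi_{q,\mathcal{G}}(X)$ and then work entirely with scalar cutoffs of $g$). This stronger statement is false. For a bounded-degree expander family $(\mathcal{G}_n)$ one has $\pi_{q,\mathcal{G}_n}(\R)=O(1)$ for every fixed $q$, yet taking $X=\ell^\infty$ and $f$ the isometric (Kuratowski) embedding of $\mathcal{G}_n$ gives $\|\nabla f\|_{L^p(\mathbb{P};X)}\leq 1$ while $\inf_x\|f-x\|_{L^p(\nu;X)}\gtrsim\log n$ (since $\mathbb{E}_{v,v'\sim\nu\otimes\nu}\,d(v,v')^p\leq 2^p\inf_x\mathbb{E}_\nu\|f-x\|^p$ and the average distance in an expander is $\asymp\log n$), so $\pi_{p,\mathcal{G}_n}(\ell^\infty)\gtrsim\log n\to\infty$. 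The information you throw away is exactly the directional geometry of $f$ in $X$: the scalar function $g$ can be a positive constant (so $\nabla g\equiv 0$, $h\equiv 0$, and your layer-cake produces nothing) while $f$ itself is spread out over a sphere in $X$ with $\nabla f$ nontrivial. Relatedly, the absorption step cannot close even on its own terms: the median bound gives $T\leq 2^{1/p}M$ with $2^{1/p}>1$, and the reverse triangle inequality gives $\|f(v)-(\xi^\ast+Tu)\|\geq|g(v)-T|$, which is the wrong direction to convert ``bounding $\|h\|_{L^p}$'' into ``bounding $M$.'' The soft case $p<q$ has the same lacuna: after truncating at height $R$ you are left with the tail $\|(g-R)_+\|_{L^p}$, which is not controllable by a small multiple of $M$ without higher-moment information.

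The paper's proof avoids this entirely by staying Banach-space-valued throughout: it uses the vector-valued Mazur map $M_{p,q}(f)(\omega)=\{f(\omega)\}^{p/q}$, where $\{x\}^\alpha=\|x\|^{\alpha-1}x$, which preserves the direction of $f(\omega)$ in $X$ and only rescales the radius. The key technical input is the lemma that $M_{p,q}$ is H\"older of exponent $\min(p/q,1)$ from the unit ball of $L^p(\Omega,\mu;X)$ to $L^q(\Omega,\mu;X)$, with constant independent of $X$ and $\Omega$; this is proved by splitting $f_i=S_iU_i$ radially and reducing to the classical scalar estimate for $S_i\mapsto S_i^{p/q}$ plus a H\"older/interpolation bound for the angular part $U_1-U_2$. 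The $X$-valued $q$-Poincar\'e inequality is then applied directly to $M_{p,q}(f)$, after which the H\"older continuity (in both directions) transfers the lower bound on $\inf_x\|f-x\|_{L^p}$ and the upper bound on the gradient. No median, no layer-cake, and no degree enters. The absence of degree dependence that you rightly identify as the goal is achieved here by the normalized probability measures $\nu$ and $\mathbb{P}$, as you guessed, but in the Mazur-map proof this is automatic rather than requiring a delicate Fubini exchange.
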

\begin{proof}
The proof is an adaptation of the original argument by Matou{\v{s}}ek. Suppose that a graph $\cG$ has $X$-valued $q$-Poincar\'e constant equal to $\pi_q$. For $x \in X$ and $\alpha>0$, we set $\{x\}^\alpha = \|x\|^{\alpha- 1} x$ if $x \neq 0$ and $\{0\}^\alpha =0$. The map $M_{p,q}:L^p(\Omega,\mu;X) \to L^q(\Omega,\mu;X)$ defined by $M_{p,q}(f)(\omega) = \{f(\omega)\}^{\frac p q}$ is a version of the classical Mazur map for vector-valued $L^p$-spaces. The next lemma shows that it has the same regularity properties as in the classical case $X=\C$.
\begin{lem} For every $1 \leq p,q <\infty$, there exists a constant $C_{p,q} > 0$ such that
  \[
    \|M_{p,q}(f_1) - M_{p,q}(f_2)\|_{L^q(\Omega,\mu;X)} \leq C_{p,q}\|f_1 - f_2\|_{L^p(\Omega,\mu;X)}^{\min(\frac p q,1)}
  \]
  for every measure space $(\Omega,\mu)$, every Banach space $X$ and every two functions $f_1$ and $f_2$ in the unit ball of $L^p(\Omega,\mu;X)$.
  \end{lem}
\begin{proof}
For $\mathbb{R}$-valued functions the lemma is classical (see \cite{zbMATH02570125}). In particular, there exists a $C > 0$ (depending on $p$ and $q$) such that for all $S_1,S_2 \colon \Omega \to \R^+$ with $\|S_1\|_p \leq 1, \|S_2\|_p\leq 1$, we have
\begin{equation} \label{eq:mazur_real}
  \|S_1^{\frac p q} - S_2^{\frac p q}\|_q \leq C \|S_1 - S_2\|_p^{\min(\frac p q,1)}.
\end{equation}
Let $f_1,f_2$ be in the unit ball of $L^p(\Omega,\mu;X)$, and define $g_i$ in (the unit ball of) $L^q(\Omega,\mu;X)$ by $g_i = M_{p,q}(f_i)$. Let $\delta = \|f_1 - f_2\|_p$. We write $f_i = S_i U_i$ with $S_i(\omega) = \|f_i(\omega)\| $ and $U_i(\omega)$ in the unit sphere of $X$, so that $g_i = S_i^{\frac p q} U_i$. By the triangle inequality, we have $\|f_1 - f_2\|_p \geq \|S_1 - S_2\|_p$, and
\[
  \|S_2(U_1 - U_2)\|_p \leq \|S_2 U_1 - S_1 U_1 \|_p+\|S_1 U_1 - S_2 U_2\|_p \leq 2 \|f_1 - f_2\|_p.
\]
Writing $g_1-g_2 = (S_1^{\frac p q} - S_2^{\frac p q}) U_1 + S_2^{\frac p q}(U_1-U_2)$, we obtain
\[
  \|g_1-g_2\|_q \leq \|S_1^{\frac p q} - S_2^{\frac p q}\|_q + \| S_2^{\frac p q}(U_1 - U_2)\|_q.
\]
The first term is less than $C \delta^{\min(\frac p q,1)}$ by \eqref{eq:mazur_real}. We can view the second term as the norm of $U_1 - U_2$ in $L^q(\Omega,S_2^p\mu;X)$. If $q \leq p$, then this norm is less than the norm of $U_1-U_2$ in $L^p(\Omega,S_2^p\mu;X)$, i.e.~less than $2\delta$. If $q \geq p$, then by H\"older's inequality this norm is less than the geometric mean of its norm in $L^\infty$ and its norm is $L^p$, i.e.~less than $2 \delta^{\frac p q}$. The previous inequality therefore becomes
\[
  \|g_1 - g_2\|_q \leq (C+2)\delta^{\min(\frac p q,1)}.
\]
This proves the lemma, because $\delta$ was defined as $\|f_1 - f_2\|_p$.
\end{proof}
\noindent \emph{Proof of Proposition \ref{prp:Matousek} (continuation).}
Let $f \in L^p(V,\nu;X)$. We have to prove that
\[
  (\mathbb E\|f(Z_0)-f(Z_1)\|^p)^{\frac 1 p} \geq \frac{1}{C \pi_q^{\max(\frac{q}{p},1)}} \inf_{x \in X} \|f - x\|_p.
\]
By homogeneity, we may assume that $\inf_{x \in X} \|f - x\|_p = \frac 1 2$, and by replacing $f$ by $f-x$ for a suitable $x$, we may assume that $\|f\|_p\leq 1$.

Let $g= M_{p,q}(f)$. It has norm $\leq 1$ in $L^q$. By the previous lemma, we have
\[
  \frac{1}{2} = \inf_{x \in X, \|x\|\leq 1} \|f-x\|_p \leq C_{q,p} \inf_{x \in X, \|x\|\leq 1} \|g-x\|_q^{\min(\frac q p,1)}.
\]
In particular, there is a constant $c$ (depending on $p,q$) such that $\inf_{x \in X} \|g-x\|_q \geq c$. By definition of $\pi_q$, we have
\[
  \left(\mathbb E \|g(Z_0) - g(Z_1)\|^q \right)^{\frac 1 q} \geq \frac{c}{\pi_q}.
\]
By the previous lemma, we obtain
\[
  \frac{c}{\pi_q} \leq C_{p,q} \left(\mathbb E \|f(Z_0) - f(Z_1)\|^p \right)^{\frac 1 p \min(\frac p q, 1)},
\]
or equivalently,
\[
  (\mathbb E\|f(Z_0)-f(Z_1)\|^p)^{\frac 1 p} \geq \left(\frac{c}{C_{p,q}\pi_q}\right)^{\max(\frac q p,1)}.
\]
This concludes the proof of the result.
\end{proof}

\subsection{Eigenvalues of the $p$-Laplacian}\label{subsec:p_laplacian}
For $\alpha>0$ and $z \in \C$, we write $\{z\}^\alpha = |z|^{\alpha - 1} z$ if $z \neq 0$ and we extend the definition by continuity to $\{0\}^\alpha = 0$. 

Let $\mathcal{G}=(V,\omega)$ be a connected finite graph, and let $\nu$ be the measure on $V$ defined at the beginning of this section. If $1 < p < \infty$, then the $p$-Laplacian on $\mathcal{G}$ is the non-linear map $\Delta_p \colon \R^V\to \R^V$ defined by
\[
  \Delta_p f(s) = \frac{1}{d_\omega(s)} \sum_{t \sim s} \omega(s,t)\{f(s) - f(t)\}^{p-1},
\]
where $d_\omega(s)$ denotes the degree of a vertex $s$.

In our opinion, it would be more natural to define the $p$-Laplacian by $\Delta_p f(s) = \{ \frac{1}{d_\omega(s)} \sum_{t \sim s} \omega(s,t)\{f(s) - f(t)\}^{p-1}\}^{\frac{1}{p-1}}$, but we use the conventional definition here.

A scalar $\lambda \in \R$ is called an eigenvalue of $\Delta_p$ if there exists an $f \neq 0$ such that $\Delta_p f= \lambda \{f\}^{p-1}$. Bourdon proved \cite[Lemme 1.3]{MR2978328} that the eigenvalues of $\Delta_p$ coincide with the critical values of $f \mapsto \frac{\|\nabla f\|_p^p}{\|f\|^p_{L^p(V,\nu)}}$. In \cite[Proposition 1.2]{MR2978328}, he proved that the smallest nonzero eigenvalue $\lambda_{1,p}(\mathcal{G})$ of $\Delta_p$ is related to the smallest constant $\pi$ for which the inequality
\[
  \inf_{c \in \R} \|f-c\|_{L^p(V,\nu)} \leq \pi \|\nabla f\|_p
\]
holds, by the formula $\lambda_{1,p}(\mathcal{G}) = \frac{1}{2\pi^p}$. Hence, the crucial inequality $\pi<1$ corresponds to the inequality $\lambda_{1,p}(\mathcal{G})>\frac 1 2$.

In the case of $X=\C$, we can reformulate Theorem \ref{thm:improvement_triangle_inequality} in order to obtain spectral information on the $p$-Laplacian. 

\begin{thm} \label{thm:p-laplacian}
Let $\mathcal{G}=(V,\omega)$ be a connected finite graph, and let $p \geq 2$. If the spectrum of $\Delta_{2,\omega}$ is contained in $\{0\} \cup [1-\varepsilon,1+\varepsilon]$ for some $\varepsilon > 0$, then
\[
  \lambda_{1,p}(\mathcal{G}) \geq \left(1-2^{1 - \frac 2 p} \varepsilon^{\frac 2 p}\right)^p \left(\frac 1 2 + 2^{1-p}\right).
\]
In particular, $\lambda_{1,p}(\mathcal{G}) > \frac 1 2$ if $\varepsilon \leq 2 p^{-\frac{p}{2}} 2^{-\frac{p^2}{2}}$.
\end{thm}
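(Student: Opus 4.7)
The plan is to chain together three ingredients already set up in the article: Riesz--Thorin interpolation to convert the $L^2$ spectral hypothesis into an $L^p$ operator-norm bound for the Markov operator on the mean-zero subspace; Theorem \ref{thm:improvement_triangle_inequality} applied with $X=\mathbb{C}$ to pass from that $L^p$ bound to a $p$-Poincar\'e inequality with constant $<1$; and Bourdon's identity $\lambda_{1,p}(\mathcal{G})=\frac{1}{2\pi_{p,\mathcal{G}}(\mathbb{C})^p}$ (recalled in Section \ref{subsec:p_laplacian}) to translate the Poincar\'e bound into the eigenvalue estimate.

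First I would rewrite the hypothesis as $\|A_\mathcal{G}-\Pi\|_{B(L^2(V,\nu))}\leq\varepsilon$, where $\Pi f=\bigl(\int f\,d\nu\bigr)\mathbf{1}$ is the orthogonal projection onto the constants; this is just the spectral condition on $\Delta_{2,\omega}=\mathrm{Id}-A_\mathcal{G}$ rephrased. Both $A_\mathcal{G}$ and $\Pi$ are contractions on $L^\infty(V,\nu)$, so the triangle inequality yields $\|A_\mathcal{G}-\Pi\|_{B(L^\infty(V,\nu))}\leq 2$. Riesz--Thorin interpolation between $L^\infty$ and $L^2$ then gives, for every $p\geq 2$,
\[
\|A_\mathcal{G}-\Pi\|_{B(L^p(V,\nu))}\leq 2^{1-\frac{2}{p}}\,\varepsilon^{\frac{2}{p}},
\]
and since $\Pi$ vanishes on $L^p_0(V,\nu)$, the same estimate is valid for the restriction of $A_\mathcal{G}$ to $L^p_0(V,\nu)$.

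Next I would invoke Theorem \ref{thm:improvement_triangle_inequality} with $X=\mathbb{C}$, viewed as a (trivially) strictly $\frac{2}{p}$-Hilbertian space. Proposition \ref{prp:p-uniform_constant_for_theta_hilbertian} shows that the constant in \eqref{eq:p-uniform_convexity} can be taken to be $C=4^{1-\frac{p}{2}}=2^{2-p}$. Substituting the norm estimate above into the quantitative computation carried out in the proof of Theorem \ref{thm:improvement_triangle_inequality} (and recorded in Remark \ref{rem:constants}) produces
\[
\pi_{p,\mathcal{G}}(\mathbb{C})\leq\frac{1}{(1+2^{2-p})^{\frac{1}{p}}\bigl(1-2^{1-\frac{2}{p}}\varepsilon^{\frac{2}{p}}\bigr)}
\]
as soon as the denominator is positive. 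Raising this to the $p$-th power and inverting via $\lambda_{1,p}(\mathcal{G})=\frac{1}{2\pi_{p,\mathcal{G}}(\mathbb{C})^p}$ yields precisely the stated lower bound. The "in particular" clause then reduces to verifying that $\varepsilon\leq 2p^{-p/2}2^{-p^2/2}$ forces $(1+2^{2-p})\bigl(1-2^{1-\frac{2}{p}}\varepsilon^{\frac{2}{p}}\bigr)^p>1$, which is a direct calculation using standard estimates for $\log(1+x)$ and $(1-u)^{-p}$.

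The only step that calls for a trick is the first one. Interpolating $A_\mathcal{G}$ directly is useless because its $L^2$-norm on the full space equals $1$ (the constants form an eigenspace with eigenvalue $1$) and the interpolation would give no information. Subtracting the rank-one projection $\Pi$ before interpolating is what captures the spectral gap, and the extra factor of $2$ in $\|A_\mathcal{G}-\Pi\|_{L^\infty\to L^\infty}$ is exactly the source of the $2^{1-2/p}$ visible in the statement. Everything else is bookkeeping through already-established machinery.
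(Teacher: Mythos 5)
Your proposal is correct and follows essentially the same route as the paper's own proof: subtracting the rank-one projection $P$ onto constants, interpolating between the $L^2$ bound $\varepsilon$ and the $L^\infty$ bound $2$ to get $\|A_{\mathcal G}\|_{B(L^p_0)}\leq 2^{1-\frac{2}{p}}\varepsilon^{\frac{2}{p}}$, and then feeding this into Theorem \ref{thm:improvement_triangle_inequality} via Remark \ref{rem:constants} and Bourdon's identity $\lambda_{1,p}(\mathcal G)=\frac{1}{2\pi_{p,\mathcal G}(\C)^p}$. The one remark you add that the paper leaves implicit — that interpolating $A_{\mathcal G}$ directly is useless since its full $L^2$ norm is $1$, and subtracting the projection is what captures the gap — is exactly the right reason for the decomposition.
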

\begin{proof} The assumption that the spectrum of $\Delta_{2,\omega}$ is contained in $\{0\} \cup [1-\varepsilon,1+\varepsilon]$ means that $A_\mathcal{G}$ has norm $\leq \varepsilon$ as an operator on $L^2_0(V,\nu)$, or equivalently, that the operator $A_\mathcal{G} - P$ has norm $\leq \varepsilon$ as an operator on $L^2(V,\nu)$, where $Pf = \int f d\nu$ is the projection onto the constant functions. Since $A_\mathcal{G} -P$ has norm $\leq 2$ as an operator on $L^\infty(V,\nu)$, by interpolation, this implies that $A_\mathcal{G}-P$ has norm $\leq 2^{1 - \frac 2 p} \varepsilon^{\frac 2 p}$ as an operator on $L^p(V,\nu)$. In particular, $A_\mathcal{G}$ has norm $\leq 2^{1 - \frac 2 p} \varepsilon^{\frac 2 p}$ on $L^p_0(V,\nu)$. By Theorem \ref{thm:improvement_triangle_inequality} and Remark \ref{rem:constants}, we obtain that for every $f \in L^p_0(V,\nu)$,
    \[ \|f\|_p \leq (1+2^{2-p})^{-\frac 1 p}(1-2^{1 - \frac 2 p} \varepsilon^{\frac 2 p})^{-1}  \|\nabla f\|_p.\]
    This implies that the Poincaré inequality holds with constant $(1+2^{2-p})^{-\frac 1 p}(1-2^{1 - \frac 2 p} \varepsilon^{\frac 2 p})^{-1}$. The proposition follows from the relationship between the Poincaré constant and $\lambda_{1,p}(\mathcal{G})$ alluded to above.
\end{proof}

\section{From Poincar\'e inequalities to fixed point properties} \label{sec:poincarefpp}
Recall that if $M=(M_0,M_1,M_2)$ is a simplicial $2$-complex and $m \in M_0$, then the link $L(m)$ is the graph $(V,\omega)$ with vertex set the set of $1$-simplices in $M_1$ containing $m$ and $\omega(s,t)$ is the number of $2$-simplices in $M_1$ having $s$ and $t$ as two of its faces. In the following, we give, as mentioned in the introduction, a direct proof of the fact that Poincar\'e inequalities give rise to fixed points. The approach is similar to the one of Oppenheim \cite{MR3278887} and we claim no originality. We have chosen to leave out some computations.
\begin{thm} \label{thm:ppoincaretofixedpoint}
  Let $1 < p < \infty$, let $X$ be a Banach space, and let $M$ be a connected and locally finite simplicial $2$-complex. Suppose that $\pi_{p,L(m)}(X) <1$ for every $m \in M_0$. If $\Gamma$ is a group that admits a properly discontinuous cocompact action by simplicial automorphisms on $M$, then $\Gamma$ has property (F$_X$).
\end{thm}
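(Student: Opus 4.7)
The plan is to follow the classical strategy of constructing a $\rho$-equivariant map $f \colon M_0 \to X$ of minimal energy and showing, via the link Poincaré inequality, that at the minimum the energy vanishes, forcing $f$ to be constant and hence to produce a global fixed point. Fix an affine isometric action $\rho$ of $\Gamma$ on $X$ with linear part $\pi$ and cocycle $b$. Let $\mathcal{E}$ denote the non-empty affine space of $\rho$-equivariant maps $f \colon M_0 \to X$. For each $c \in M_0$, identify the vertices of $L(c)$ with the neighbors of $c$ in $M$, and let $f_c$ be the restriction of $f$ to this set. Since $\pi$ is isometric, the functional
\[
  E(f) \;=\; \sum_{\sigma \in F_2} \sum_{\{a,b\} \subset \sigma} \|f(a)-f(b)\|^p,
\]
defined using a fundamental domain $F_2$ for $\Gamma \acts M_2$, is well-defined on $\mathcal{E}$.

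The proof then rests on two combinatorial identities obtained by rearranging sums over triangles: if $F_0$ is a fundamental domain for $\Gamma \acts M_0$, then
\[
  \sum_{c \in F_0}|M_2(c)|\,\|\nabla f_c\|_{L^p(V_{L(c)}\times V_{L(c)},\mathbb P_c;X)}^p \;=\; E(f)
\]
and likewise
\[
  \sum_{c \in F_0}\tfrac{1}{2}\sum_{a} d_{\omega_c}(a)\|f(a)-f(c)\|^p \;=\; E(f).
\]
Each identity expresses $E(f)$ by counting the contribution of each triangle from its three vertices.

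Now suppose that $E$ attains its infimum on $\mathcal{E}$ at some $f$. The Euler--Lagrange condition obtained by perturbing $f(c)$ for $c \in F_0$ precisely says that $f(c)$ is a minimizer of $x \mapsto \sum_a d_{\omega_c}(a)\|x-f(a)\|^p$, i.e.\ $f(c)$ realizes the infimum in the Poincaré inequality applied to $f_c$ on $L(c)$. Substituting $x=f(c)$ in that inequality and multiplying by $|M_2(c)|$ gives
\[
  \tfrac{1}{2}\sum_{a} d_{\omega_c}(a)\|f(a)-f(c)\|^p \;\leq\; \pi_{p,L(c)}(X)^p\sum_{\{a,b,c\}\in M_2(c)}\|f(a)-f(b)\|^p.
\]
Summing over $c\in F_0$ and using the two identities above yields $E(f)\leq \pi^p E(f)$ with $\pi:=\sup_{m\in F_0}\pi_{p,L(m)}(X)<1$ (cocompactness gives only finitely many $\Gamma$-orbits of links). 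Hence $E(f)=0$, so $f$ is constant on every edge that lies in some $2$-simplex; connectedness of $M$ and of the links forces $f$ to be constant on $M_0$, and $\rho$-equivariance then identifies its value with a fixed point of $\rho$.

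The main obstacle is the first step, namely establishing the existence of a minimizer (or an adequate substitute) for $E$ on the affine space $\mathcal{E}$ for a \emph{general} Banach space $X$: weak compactness arguments require superreflexivity, and for arbitrary $X$ one must instead build an iterative ``harmonization'' scheme $f_{n+1}(c):=\bar f_{n,c}$ and show, using the contraction $\pi<1$ on each link, that $E(f_n)$ decreases geometrically and that the sequence produces, in the limit, an equivariant map of zero energy. This is the technical heart of the argument and follows the strategy of Oppenheim \cite{MR3278887}; the remaining steps above are then essentially combinatorial consequences of the link Poincaré inequality and $\Gamma$-equivariance.
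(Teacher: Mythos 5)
Your proposal takes the same overall route as the paper: set up the affine space $\mathcal E$ of equivariant maps, exploit the two combinatorial identities over triangles (the paper's Lemma~\ref{lem:eneq}), apply the link Poincar\'e inequality vertex by vertex, and conclude $E=0$. The identities and the Euler--Lagrange observation are all correct.

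However, the step you flag as ``the technical heart'' and defer to Oppenheim is genuinely incomplete as written, and the specific iteration you propose would not close the argument. If $\psi(m)$ is the Poincar\'e optimizer for $\varphi$ on $L(m)$, what you obtain from the Poincar\'e inequality (after the combinatorial rearrangement) is the \emph{mixed} estimate $E(\varphi,\psi)\le c\,E(\varphi)$, where $E(\varphi,\psi)$ compares $\varphi$ on the neighbours with $\psi$ at the centre. This does \emph{not} immediately bound $E(\psi)$, which compares $\psi$ with itself; so putting $f_{n+1}(c):=\psi(c)$ does not obviously give a geometric decay of $E(f_n)$. The paper resolves this by taking the \emph{midpoint} $\varphi'=\tfrac{1}{2}(\varphi+\psi)$ and proving the convexity-type inequality $E\bigl(\tfrac{\varphi+\psi}{2}\bigr)\le E(\varphi,\psi)$ (inequality \eqref{eq:triangle} of Lemma~\ref{lem:eneq}, obtained by the triangle inequality applied to the decomposition $\tfrac{\varphi(n_1)+\psi(n_1)}{2}-\tfrac{\varphi(n_2)+\psi(n_2)}{2}=\tfrac{\varphi(n_1)-\psi(n_2)}{2}-\tfrac{\varphi(n_2)-\psi(n_1)}{2}$). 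Only with this does one get $E(\varphi')\le c\,E(\varphi)$ for the new map. Your outline is also missing the companion estimate $d(\varphi,\varphi')\le E(\varphi)$, which is what makes the iterates a Cauchy sequence in the complete metric on $\mathcal E$, and without which geometric decay of energy would not by itself produce a limiting map. A further small point: before extending $\psi$ by equivariance one must replace each $\psi(m)$ by its average over the $\Gamma_m$-orbit so that $\psi$ is well defined as an element of $\mathcal E$; your sketch omits this step.

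So: right strategy, correct combinatorial backbone, but the actual iteration scheme and its convergence analysis -- the midpoint step, the comparison $E\bigl(\tfrac{\varphi+\psi}{2}\bigr)\le E(\varphi,\psi)$, the distance estimate, and the stabilizer-averaging -- are the content of the proof and cannot be left as a pointer to \cite{MR3278887}.
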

\begin{proof}
Suppose that $\Gamma \acts M$ is a group action by simplicial automorphisms that is properly discontinuous and cocompact. Let $\Xi_0$ denote a set of representatives of the $\Gamma$-orbits in $M_0$, and for a vertex $m \in M_0$, let $\Gamma_{m}$ denote the stabilizer of $m$. Then $\Xi_0$ is a finite set (resp.~$\Gamma_{m}$ is a finite group), because the action $\Gamma \acts M$ is cocompact (resp.~properly discontinuous).

For $m \in \Xi_0$, we set $a_m = \frac{\sum_{s,t \in V} \omega(s,t)}{|\Gamma_m|}$. Let $\mathcal{E}$ be the affine space of $\Gamma$-equivariant maps $\psi:M_0 \to X$, which is naturally identified with $\prod_{m \in \Xi_0} X^{\Gamma_m}$ and is, in particular, nonempty.
\begin{lem} \label{lem:eneq}
  For $\varphi,\psi \in \mathcal{E}$ and $p \in [1,\infty)$, we have
  \begin{multline} \label{eq:mag}
    \sum_{m \in \Xi_0} a_m \|(n_1,n_2) \mapsto \varphi(n_1) - \psi(n_2)\|_{L^p(V \times V,\mathbb P;X)}^p \\= \sum_{m \in \Xi_0} a_m \|n \mapsto \varphi(n)-\psi(m)\|_{L^p(V,\nu;X)}^p.
  \end{multline}
  We denote this quantity by $E(\varphi,\psi)^p$, or simply by $E(\varphi)^p$ when $\varphi=\psi$.
  Moreover, we have the inequality
  \begin{equation} \label{eq:triangle}
    E\left(\frac{\varphi+\psi}{2}\right) \leq E(\varphi,\psi).
  \end{equation}
\end{lem}
\begin{proof}[Proof of Lemma \ref{lem:eneq}]
If $\varphi = \psi$, then \eqref{eq:mag} is exactly \cite[Lemma 4.1]{MR2978328}. For $\varphi \neq \psi$, the same computation proves the equality.
 For \eqref{eq:triangle}, we decompose the function $(n_1,n_2) \mapsto \frac{\varphi(n_1)+\psi(n_1)}{2} - \frac{\varphi(n_2)+\psi(n_2)}{2}$ as $(n_1,n_2) \mapsto \frac{\varphi(n_1)-\psi(n_2)}{2} - \frac{\varphi(n_2)-\psi(n_1)}{2}$. By the triangle inequality, we obtain
 \[
   E\left(\frac{\varphi+\psi}{2}\right) \leq \frac{1}{2} \left(E(\varphi,\psi)+E(\psi,\varphi)\right).
 \]
 This is \eqref{eq:triangle}, because $E(\varphi,\psi) = E(\psi,\varphi)$.
\end{proof}
\noindent \emph{Proof of Theorem \ref{thm:ppoincaretofixedpoint} (continuation).} We now define a complete distance on $\mathcal{E}$ by
\[
  d(\varphi,\psi) = \left(\sum_{m \in \Xi_0} a_m  \|\varphi(m) - \psi(m)\|^p\right)^{\frac 1 p}.
\]
Take $c<1$ such that $\pi_{p,L(m)}(X)<c$ for every $m \in \Xi_0$, and let $\varphi \in \mathcal{E}$. By definition of $\pi_{p,L(m)}(X)$, for every $m \in \Xi_0$, there is a $\psi(m) \in X$ such that
\[
  \|n \mapsto \varphi(n)-\psi(m)\|_{L^p(V,\nu;X)} \leq c \|(n_1,n_2) \mapsto \varphi(n_1) - \varphi(n_2)\|_{L^p(V \times V,\mathbb{P};X)}.
\]
Moreover, by the $\Gamma$-equivariance of $\varphi$, the quantity $\|n \mapsto \varphi(n)-\psi(m)\|_{L^p(V,\nu;X)}$ is unchanged if $\psi(m)$ is replaced by an element of the $\Gamma_m$-orbit of $\psi(m)$. Therefore, by replacing $\psi(m)$ by the average on its $\Gamma_m$-orbit, we can assume that $\psi(m)$ is $\Gamma_m$-invariant. This means that $\psi$, defined so far only on $\Xi_0$, can be extended to an element of $\mathcal E$. Summing the $p$-th power of the previous expression over $m \in \Xi_0$, yields $E(\varphi,\psi) \leq c E(\varphi)$.
By \eqref{eq:triangle}, this implies $E\left(\frac{\varphi+\psi}{2}\right) \leq c E(\varphi)$.

On the other hand, using the triangle inequality, we obtain that for $m \in \Xi_0$,
\begin{multline*}
  \left\|\varphi(m) - \frac{\varphi(m)+\psi(m)}{2}\right\| = \frac 1 2 \|n \mapsto (\varphi(m) - \varphi(n)) + (\varphi(n) - \psi(m)) \|_p \\ \leq \frac 1 2 \|n \mapsto \varphi(m) - \varphi(n)\|_p + \frac 1 2\|n \mapsto \varphi(n) - \psi(m)\|_{p},
\end{multline*}
where $\|\cdot\|_p$ denotes the norm on $L^p(V,\nu;X)$. It follows that
\[
  d\left(\varphi, \frac{\varphi+\psi}{2}\right) \leq \frac{1}{2}(E(\varphi) + E(\varphi,\psi)) \leq E(\varphi).
\]

The conclusion of the preceding discussion is that for every $\varphi \in \mathcal{E}$, there is a $\varphi' \in \mathcal{E}$ (namely $\varphi' = \frac{\varphi+\psi}{2}$) such that $E(\varphi') \leq c E(\varphi)$ and $d(\varphi,\varphi') \leq E(\varphi)$. If we start from some $\varphi_0 \in \mathcal E$, by induction we obtain a sequence $(\varphi_n)$ in $\mathcal E$ with $E(\varphi_n) \leq c^n E(\varphi_0)$ and $d(\varphi_n,\varphi_{n+1}) \leq c^n E(\varphi_0)$. The sequence $(\varphi_n)$ is a Cauchy sequence and therefore converges to some $\varphi_\infty \in \mathcal E$ satisfying $E(\varphi_\infty)=0$. For a general complex $M$, the formula $E(\varphi_\infty)=0$ means that $\varphi_\infty$ is constant on the connected components of $L(m)$ for every $m \in M_0$. Here the assumption that $\pi_{p,L(m)}(X)<\infty$ implies that $L(m)$ is connected, and hence the assumption that $M$ is connected implies that $\varphi_\infty$ is constant and is necessarily equal to a fixed point. This proves the theorem.
\end{proof}
\begin{rem}
  As pointed out in \cite[Proposition 11.6]{drutumackay}, for bipartite graphs and large $p$, the condition $\pi_{p,\mathcal{G}}(\C) <1$ is in general not satisfied, even not for the complete bipartite graphs. As a consequence, Theorem \ref{thm:ppoincaretofixedpoint} is not applicable when $M$ has a link that is bipartite.

  \begin{qst} \label{question:bipartite}
    Does Theorem \ref{thm:ppoincaretofixedpoint} hold if the assumption $\pi_{p,L(m)}(X) <1$ is replaced by $\pi_{p,L(m)}^{\textrm{bipartite}}(X)<1$ when $L(m)$ is bipartite?
  \end{qst}

By Remark \ref{rem:bipartite} and the argument in the following section, a positive answer to this question would imply that Theorem \ref{thm:criterionuc} holds when, for bipartite links $L$, the assumption $\|A_L\|_{B(L^2_0(L,\nu))}<\varepsilon$ is replaced by ``$[\varepsilon,1[$ does not intersect the spectrum of $A_L$''. This would in particular improve the results of \cite{drutumackay} and imply that for every $p<\infty$, random groups in the Gromov density model with densities $>\frac{1}{3}$ have property (F$L^p$) with probability tending to $1$.
\end{rem}

\section{Proofs of Theorem \ref{thm:maincriterion} and Theorem \ref{thm:criterionuc}} \label{sec:proofmainresult}
If $X$ is $p$-uniformly convex, then Theorem \ref{thm:maincriterion} is a direct combination of Theorem \ref{thm:improvement_triangle_inequality} and Theorem \ref{thm:ppoincaretofixedpoint}. Moreover, if $X$ is an $L^p$-space with $p \geq 2$ (or more generally a subquotient of a $\theta$-Hilbertian space with $\theta = \frac 2 p$), then we see from Remark \ref{rem:constants} that Theorem \ref{thm:maincriterion} holds with $\varepsilon' = \frac{2}{p 2^{p}}$.

We now prove the general case of Theorem \ref{thm:maincriterion}. The idea of the proof is to reduce to the case of $p$-uniformly convex Banach spaces.
\begin{proof}[Proof of Theorem \ref{thm:maincriterion}]
Let $1 < p < \infty$, and let $X$ be a superreflexive Banach space. As was recalled in Section \ref{sec:banachspaces}, by a famous result of Pisier \cite{MR0394135}, there exists a $q \in [2,\infty)$ and an equivalent norm $N$ on $X$ that is $q$-uniformly convex. Pisier's proof has the feature that every isometry of $(X,\|\cdot\|)$ remains an isometry of $(X,N)$, but even if this were not the case, we could always assume this by replacing $N$ by the equivalent norm $N'(x) = \sup_{g \in O(X,\|\cdot\|)} N(g x)$ (see the proof of $(2)\implies(3)$ in \cite[Proposition 2.3]{MR2316269}). Denote the Banach space $(X,N)$ by $Y$. We now use the following interpolation result, which was already used in a similar context in \cite{MR2732331}. 
\begin{lem} \label{lem:interpolationA}
There exists a constant $C > 0$ and a $\theta \in (0,1]$ such that for every graph $\mathcal G=(V,\omega)$, we have
\[
  \| A_\mathcal G\|_{B(L^q_0(V,\nu;Y))} \leq C \| A_\mathcal G\|_{B(L^p_0(V,\nu;X))}^\theta.
\]
\end{lem}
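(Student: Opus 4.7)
The strategy is complex interpolation applied to the single operator $A_\mathcal{G}$ viewed on vector-valued $L^r$-spaces. Since $X$ and $Y = (X, N)$ share the same underlying vector space with equivalent norms (with constants depending only on $X$ and the renorming $N$), the couple $(X, Y)$ is trivially compatible, and every complex interpolation space $(X, Y)_\eta$, $\eta \in [0, 1]$, carries a norm equivalent to both $\|\cdot\|_X$ and $N$, with equivalence constants independent of $\mathcal{G}$.

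The argument rests on two ingredients. First, $A_\mathcal{G}$ is a contraction on $L^r(V, \nu; Z)$ for every $r \in [1, \infty]$ and every Banach space $Z$, because its kernel is positive with row sums equal to $1$. Second, the interpolation identity
\[
(L^{p_0}(V, \nu; X_0), L^{p_1}(V, \nu; X_1))_\eta = L^{p_\eta}(V, \nu; (X_0, X_1)_\eta)
\]
recalled in Section \ref{subsection:complex_interpolation} passes to the mean-zero subspaces, since the averaging projection $f \mapsto \int f\, d\nu$ is a contraction on every $L^r(V, \nu; Z)$ and complex interpolation commutes with the kernel of a coherently defined, uniformly bounded projection.

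I would then split the argument according to the relative position of $p$ and $q$. If $p \leq q$, I would apply vector-valued Riesz--Thorin to the couple $(L^\infty_0(V, \nu; Y), L^p_0(V, \nu; X))$ at parameter $\eta = p/q$; the endpoint bounds $\|A_\mathcal{G}\|_{B(L^\infty_0(V, \nu; Y))} \leq 1$ and the hypothesized bound on $L^p_0(V, \nu; X)$ combine to yield
\[
\|A_\mathcal{G}\|_{B(L^q_0(V, \nu; (Y, X)_\eta))} \leq \|A_\mathcal{G}\|_{B(L^p_0(V, \nu; X))}^{p/q}.
\]
If instead $p > q$, the same argument works with the $L^\infty$-endpoint replaced by $L^1_0(V, \nu; Y)$, producing a different exponent $\eta = \frac{p(q-1)}{q(p-1)} \in (0, 1)$. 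In either case the norm equivalence of $(Y, X)_\eta$ with $Y$ converts the bound into one on $L^q_0(V, \nu; Y)$ at the cost of a multiplicative constant $C$ depending only on $X$, $N$, $p$ and $q$, producing the inequality of the lemma with $\theta = \eta$.

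The only technical subtlety is the compatibility of complex interpolation with the mean-zero condition, but this reduces to the standard fact that interpolation commutes with taking the kernel of a coherent, uniformly bounded projection defined on the endpoints. Everything else is a direct application of vector-valued complex interpolation, precisely in the spirit of the technique employed by Pisier in \cite{MR2732331}.
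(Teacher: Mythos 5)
Your proof is correct, and it shares the same high-level strategy as the paper (complex interpolation with a contractive $L^r$-endpoint, $r \in \{1,\infty\}$, together with a norm-comparison step to pass from $X$ to $Y$), but the technical execution differs in two places. First, you interpolate the Banach spaces $X$ and $Y$ simultaneously with the $L^p$-exponents, producing the intermediate space $(Y,X)_\eta$ and then converting its norm back to $N$; the paper instead keeps the Banach space $X$ fixed throughout the interpolation, and only at the very end passes to $Y$ via the Banach--Mazur distance. Second, and more substantively, you interpolate on the mean-zero subspaces $L^s_0$ directly and invoke the fact that complex interpolation commutes with the kernel of a uniformly bounded, coherently defined projection; the paper sidesteps this entirely by working with the operator $T : f \mapsto A_{\mathcal G}\bigl(f - \int f\,d\nu\bigr)$ on the \emph{full} spaces $L^s(V,\nu;X)$, noting $\|T\|_{B(L^s)} \le 2\|A_{\mathcal G}\|_{B(L^s_0)}$, applying plain vector-valued Riesz--Thorin, and restricting to $L^q_0$ at the end (where $T$ coincides with $A_{\mathcal G}$). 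The paper's device eliminates the need for any subspace-interpolation lemma and keeps constants explicit with essentially no verification of uniformity; your route is valid but requires you to confirm that the equivalence constants in the projection-interpolation step depend only on the norm of the projection (which is $\le 2$ uniformly) and not on $\mathcal G$. Both routes reach the stated inequality, so the main thing yours costs you is an extra technical ingredient that the paper shows can be avoided.
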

\begin{proof} Let $r \in [1,\infty]$ and $\theta \in (0,1)$ such that $\frac 1 q = \frac \theta p + \frac{(1-\theta)}{r}$. The operator $f \mapsto A_{\mathcal{G}}(f - \int f d\nu)$ has norm less than $2\| A_\mathcal G\|_{B(L^s_0(V,\nu;X))}$ on $L^s(V,\nu;X)$, since this operator is the composition of the operator $f \mapsto f-\int fd\nu \in L^s_0$ with norm at most $2$ with the restriction of $A_\mathcal G$ to $L^s_0$. Therefore, by interpolation we have
\[
  \|A_\mathcal G\|_{B(L^q_0(V,\nu;X))} \leq 2 \| A_\mathcal G\|_{B(L^p_0(V,\nu;X))}^\theta \| A_\mathcal G\|_{B(L^r_0(V,\nu;X))}^{1-\theta},
\]
which we simply bound by $2 \| A_\mathcal G\|_{B(L^p_0(V,\nu;X))}^\theta$. The conclusion now follows, because $\| A_\mathcal G\|_{B(L^p_0(V,\nu;Y))}$ is less than the product of $\| A_\mathcal G\|_{B(L^p_0(V,\nu;X))}$ and the Banach-Mazur distance between $X$ and $Y$.
\end{proof}
\noindent \emph{Proof of Theorem \ref{thm:maincriterion} (continuation).}
Since $Y$ is $q$-uniformly convex, by the case already proved, there exists an $\varepsilon_1>0$ such that a group with a properly discontinuous cocompact action by simplicial automorphisms on a simplicial $2$-complex $M$ with all its links $L$ satisfying $\|A_L\|_{B(L^q_0(L,\nu;Y))}<\varepsilon_1$ has property (F$_Y$). Therefore, if $\varepsilon'>0$ satisfies $C \varepsilon'^\theta \leq \varepsilon_1$, then every such group has property (F$_Y$). In particular it has property (F$_X$) because by construction every action by affine isometries on $X$ is an action by affine isometries on $Y$.
\end{proof}

Finally, we explain how Theorem \ref{thm:criterionuc} follows from Theorem \ref{thm:maincriterion}.
\begin{proof}[Proof of Theorem \ref{thm:criterionuc}]
Let $X$ be a uniformly curved Banach space. As recalled in Section \ref{subsection:unif_curved}, the space $X$ is superreflexive. Let $\varepsilon'= \varepsilon'(2,X)$ be given by Theorem \ref{thm:maincriterion} for $p=2$. We claim that for a finite graph $\mathcal{G}$, we have the inequality
\begin{equation}\label{eq:consequence_unif_curvedness}
  \|A_{\mathcal G}\|_{B(L^2_0(V,\nu;X))} \leq 2 \Delta_X\left(\frac{1}{2}\|A_{\mathcal G}\|_{B(L^2_0(V,\nu))}\right).
\end{equation}
Indeed, let $T \colon L^2(V,\nu) \to L^2(V,\nu)$ be the operator $f \mapsto \frac 1 2(A_{\mathcal G} f - \int fd\nu)$. Then $T$ has norm $\leq 1$ on $L^1(V,\nu)$ and $L^\infty(V,\nu)$ and norm $\frac{1}{2}\|A_{\mathcal G}\|_{B(L^2_0(V,\nu))}$ on $L^2(V,\nu)$, so by definition of $\Delta_X$, we have $\|T_X\| \leq \Delta_X( \|T\|)$. We obtain \eqref{eq:consequence_unif_curvedness} by considering the restriction to $L^2_0(V,\nu;X)$. Since $X$ is uniformly curved, there is an $\varepsilon>0$ such that $2 \Delta_X(\frac{\varepsilon}{2})<\varepsilon'$. It follows from \eqref{eq:consequence_unif_curvedness} that Theorem \ref{thm:criterionuc} follows for this value of $\varepsilon$.
\end{proof}
\begin{rem}\label{rem:explicit_computation_of_epsilonX} In many cases, we can give a direct proof of Theorem \ref{thm:criterionuc} (not relying on \cite{MR0394135}) and compute the constants explicitly.
  \begin{enumerate}[(i)]
  \item If $X$ is an $L^p$-space with $p \geq 2$, then Theorem \ref{thm:criterionuc} holds with $\varepsilon = 2 p^{-\frac p 2} 2^{-\frac{p^2}{2}}$.
  \item If $X$ is stricly $\frac 2 p$-Hilbertian, or more generally a subquotient of a stricly $\frac{2}{p}$-Hilbertian space, then Theorem \ref{thm:criterionuc} also holds with $\varepsilon = 2 p^{-\frac p 2} 2^{-\frac{p^2}{2}}$.
  \item If $X$ is isomorphic to a space as in (i) or (ii), with Banach-Mazur distance $d$, then Theorem \ref{thm:criterionuc} holds with $\varepsilon = K p^{-\frac p 2}2^{-\frac{p^2}{2}} d^{-\frac{p(p+1)}{2}}$, where $K$ is a universal constant.
    \end{enumerate}
\end{rem}
\begin{proof}
  As above, we consider the operator $f \mapsto A_{\mathcal G}f - \int fd\nu$. If $X$ is a Hilbert space and $p=2$, then its norm on $L^2(V,\nu;X)$ is equal to $\|A_{\mathcal G}\|_{B(L^2_0(V,\nu))}$ (say by decomposing in an orthonormal basis), whereas for $p=\infty$ and $X$ arbitrary, we have the trivial bound $\|f \mapsto A_{\mathcal G}f - \int fd\nu\|_{B(L^\infty(V,\nu;X))} \leq 2$. So if $p \geq 2$ and $X$ is an $L^p$-space (or more generally a strictly $\frac 2 p$-Hilbertian space), then interpolation directly gives the inequality
\[
  \|f \mapsto A_{\mathcal G}f - \int fd\nu\|_{B(L^p(V,\nu;X))} \leq 2^{1 - \frac 2 p}\|A_{\mathcal G}\|_{B(L^2_0(V,\nu))}^{\frac 2 p}
\]
for every graph $\mathcal G$. So, by Remark \ref{rem:constants}, we obtain that Theorem \ref{thm:criterionuc} holds as soon as $2^{1- \frac 2 p} \varepsilon^{\frac 2 p} \leq \frac{2}{p2^p}$, i.e.~as soon as $\varepsilon \leq 2 p^{-\frac p 2} 2^{-\frac{p^2}{2}}$. The same argument works if $X$ is a subquotient of a stricly $\frac{2}{p}$-Hilbertian space. The additional argument is to observe that the quantity $\| f\in L^p(V,\nu;X) \mapsto A_{\mathcal G} f - \int f d\nu\|$ can only decrease when $X$ is replaced by a subquotient of $X$.

Finally, consider the case when $X$ is at Banach-Mazur distance $\leq d$ from a subquotient of a strictly $\frac 2 p$-Hilbertian space $Y$. First, suppose that \eqref{eq:p-uniform_convexity} holds for $X$ with $C = \frac{1}{2^{p-2}d^p}$. Second, we will reduce to this case. By the above proof for $Y$, for every finite graph $\mathcal G$, we have $\|A_{\mathcal G}\|_{B(L^p_0(V,\nu;Y))} \leq 2^{1- \frac 2 p}\|A_{\mathcal G}\|_{B(L^p_0(V,\nu))}^{\frac 2 p}$, and therefore,
\[
  \|A_{\mathcal G}\|_{B(L^p_0(V,\nu;X))} \leq 2^{1- \frac 2 p} d \|A_{\mathcal G}\|_{B(L^p_0(V,\nu))}^{\frac 2 p}.
\]
Hence, by the proof of Theorem  \ref{thm:improvement_triangle_inequality}, we have that $\pi_{p,\mathcal G}(X,N)<1$ as soon as $\|A_{\mathcal G}\|_{B(L^p_0(V,\nu))} \leq \varepsilon$ with $(1+\frac{1}{2^{p-2}d^p})^{\frac 1 p}(1-2^{1- \frac 2 p}d \varepsilon^{\frac 2 p})>1$. Elementary computations show that this holds as soon as $\varepsilon \leq K p^{-\frac p 2}2^{-\frac{p^2}{2}} d^{-\frac{p(p+1)}{2}}$, where $K$ is a universal constant.

To conclude the proof, we explain how we can reduce to the case in which \eqref{eq:p-uniform_convexity} holds for $X$ with $C = \frac{1}{2^{p-2}d^p}$. We identify $X$ and $Y$ in such a way that the norms $\|\cdot\|_X$ and $\|\cdot\|_Y$ satisfy $\|x\|_X \leq \|x\|_Y \leq d\|x\|_X$ for all $x \in X$. Denote by $O(X)$ the group of linear isometries of $(X,\|\cdot\|_X)$ and define the norm $N(x) = \sup_{g \in O(X)} \|gx\|_Y$, so that $\|x\|_Y \leq N(x) \leq d \|x\|_Y$ for all $x \in X$. By construction, every action by affine isometries on $(X,\|\cdot\|_X)$ is an action by isometries on $(X,N)$. We have to prove that for $(X,N)$ \eqref{eq:p-uniform_convexity}  holds with $C = \frac{1}{2^{p-2}d^p}$. This follows from Proposition \ref{prp:p-uniform_constant_for_theta_hilbertian}, which asserts that \eqref{eq:p-uniform_convexity} holds for $\|\cdot\|_Y$ with $C=\frac{1}{2^{p-2}}$. Indeed, if $U$ is an $X$-valued random variable, then for every $g \in O(X)$ we can apply \eqref{eq:p-uniform_convexity} to the random variable $gU$ and obtain
\[
  \|g \mathbb E U\|_Y^p + \frac{1}{2^{p-2}} \mathbb E \| g(U-\mathbb E U)\|_Y^p \leq \mathbb E\|g U\|_Y^p.
\]
In particular, by the inequality $N(\cdot)\leq d \|g \cdot\|_Y \leq N(\cdot)$, we obtain
\[
  \|g \mathbb E U\|_Y^p + \frac{1}{2^{p-2}d^p } \mathbb E N(g(U-\mathbb E U))^p \leq  \mathbb E N(U)^p.
\]
By taking the supremum over $g \in O(X)$ we obtain
\[
  N(\mathbb E U)^p + \frac{1}{2^{p-2}d^p } \mathbb E N(g(U-\mathbb E U))^p \leq  \mathbb E N(U)^p,
\]
which finishes the proof.
\end{proof}

\section{Erd\H{o}s-R\'enyi graphs: spectrum and degree distribution} \label{sec:ergraphs}
In this section, we collect and establish some results on the spectrum and degree distribution of Erd\H{o}s-R\'enyi graphs, which will be used in Section \ref{sec:randomgroups}.

For future reference, we first recall a form of Chernoff's inequality (see for example the first pages of \cite{MR1864966}), which provides standard concentration bounds for binomial random variables. If $S$ is a binomial random variable $B(N,\rho)$ (meaning that $S$ takes integer values $k \in \{0,1,\dots,N\}$ with probability $\binom{N}{k} \rho^k (1-\rho)^{N-k}$, then for every positive $c$, we have
\begin{align}\label{eq:chernoff_lower} \mathbb P[ S-\mathbb E S \leq -c \mathbb E S] &\leq 2e^{-\frac{c^2 \mathbb{E} S}{2}},\\
 \label{eq:chernoff_upper} \mathbb P[ S-\mathbb E S \geq c \mathbb E S] &\leq e^{-\frac{c^2 \mathbb{E} S}{2+c}}.\end{align}

\subsection{Erd\H{o}s-R\'enyi graphs}
If $m$ is a positive integer and $\rho \in [0,1]$, an Erd\H{o}s-R\'enyi graph $\mathbb{G}(m,\rho)$ is a random graph with $m$ vertices in which each unoriented edge $\{s,t\}$ with $s \neq t$ occurs independently with probability $\rho$.

It is well known (see e.g.~\cite{MR1864966}) that the connectivity threshold occurs at ${\rho \sim \frac{\log m}{m}}$: For every $\eta>0$, the probability that $\mathbb{G}(m,\rho)$ is connected is $1-o(1)$ if $\rho \geq  (1+\eta) \frac{\log m}{m}$ and $o(1)$ if $\rho \leq (1-\eta) \frac{\log m}{m}$.

We will need a lemma stating that, above the connectivity threshold, all vertices have degree of the same order.

\begin{lem} \label{lem:concentration_degree}
Let $\eta>0$. There are constants $c_1,c_2 > 0$ and a sequence $(u_m)$ of positive real numbers tending to $0$ such that the following holds: For every $\rho \geq (1+\eta) \frac{\log m}{m}$, with probability $\geq 1-u_m$, the degree of every vertex in an Erd\H{o}s-R\'enyi graph $\mathbb{G}(m,\rho)$ is in the interval $[c_1 m\rho,c_2m\rho]$.
\end{lem}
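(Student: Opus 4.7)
The proof is a standard application of the multiplicative Chernoff inequality followed by a union bound over the vertices. For any vertex $v$ of $\mathbb{G}(m,\rho)$, its degree $d_v$ is $\mathrm{Bin}(m-1,\rho)$-distributed, hence has mean $\mu_m := (m-1)\rho$. Under the hypothesis $\rho \geq (1+\eta)\frac{\log m}{m}$, this mean satisfies $\mu_m \geq (1+\eta/2)\log m$ for all $m$ large enough.

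Applying the multiplicative Chernoff bound to $d_v$ gives, for $c_1 \in (0,1)$ and $c_2 > 1$,
\[
  \mathbb{P}(d_v \leq c_1 \mu_m) \leq e^{-\mu_m\, h_-(c_1)}, \qquad \mathbb{P}(d_v \geq c_2 \mu_m) \leq e^{-\mu_m\, h_+(c_2)},
\]
where the rate functions are $h_-(c) = 1 - c + c\log c$ (for $c \in (0,1)$) and $h_+(c) = c\log c - c + 1$ (for $c > 1$). The key elementary fact is that $h_-(c_1) \to 1$ as $c_1 \to 0^+$ and $h_+(c_2) \to +\infty$ as $c_2 \to +\infty$. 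I would therefore fix $c_1 = c_1(\eta) > 0$ small enough and $c_2 = c_2(\eta)$ large enough so that both $h_-(c_1)(1+\eta/2)$ and $h_+(c_2)(1+\eta/2)$ exceed $1 + \eta/4$; this is possible precisely because $\eta > 0$ leaves a positive gap above $1$ after rescaling.

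With these choices, each individual tail probability is at most $m^{-(1+\eta/4)}$ for $m$ large, so a union bound over the $m$ vertices shows that the probability of having some vertex with degree outside $[c_1 \mu_m, c_2 \mu_m]$ is at most $u_m := 2 m^{-\eta/4}$, which tends to $0$. Finally, since $\mu_m = (m-1)\rho$ and $m\rho$ differ only by the factor $(m-1)/m$, one may replace $\mu_m$ by $m\rho$ in the target interval by adjusting $c_1$ and $c_2$ by a universal factor. I expect no genuine obstacle: the only delicate point is to arrange $h_-(c_1)$ sufficiently close to $1$ to beat the union bound over $m$ vertices, which is automatic for any $\eta > 0$.
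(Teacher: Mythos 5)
Your proof is correct and takes essentially the same approach as the paper: both rely on the multiplicative Chernoff tail bound for binomials (with rate function $\varphi(c) = 1-c+c\log c$, which is what both your $h_-$ and $h_+$ reduce to) together with a union bound over the $m$ vertices, exploiting the slack in the exponent provided by $\eta>0$. The only difference is presentational: the paper rederives the lower-tail estimate directly from $\sum_k \binom{m}{k}\rho^k(1-\rho)^{m-1-k}$ and splits off the case of large $\eta$, whereas you cite the standard multiplicative Chernoff bounds, which handles both tails and all $\eta>0$ uniformly.
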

\begin{proof} The degree of every vertex in $\mathbb{G}(m,\rho)$ is a binomial $B(m-1,\rho)$ random variable. For $\eta \geq 100$, the lemma is a straightforward application of Chernoff's inequalities \eqref{eq:chernoff_lower} and \eqref{eq:chernoff_upper}, say with $c_1=\frac 1 2$ and $c_2= \frac 3 2$. For $\eta \in (0,100)$, the lemma follows for example from \cite[Exercise 3.4]{MR1864966}, which provides the optimal values for $c_1$ and $c_2$. We could not find a solution to this exercise in the literature, so for the reader's convenience, we provide a proof of the only nontrivial inequality, i.e. the lower bound (the upper bound for $c_2=3$ is also a direct application of Chernoff's inequality \eqref{eq:chernoff_upper}).

Suppose that $(1+\eta) \log m \leq m\rho \leq 101 \log m$. Let $c>0$ be a constant to be determined later. We have
\begin{align*}
  \mathbb P( B(m-1,\rho) \leq cm\rho) &\leq \sum_{0 \leq k \leq cm\rho} \rho^k (1-\rho)^{m-1-k} \binom{m}{k}\\& \leq \sum_{0 \leq k \leq cm\rho} \frac{(m\rho)^k}{k!} (1-\rho)^{m-1-k}
\end{align*}
by bounding $\binom m k \leq \frac{m^k}{k!}$. For $c$ small enough, we have that $\frac{(m\rho)^k}{k!} (1-\rho)^{m-k-1} \leq \frac 1 2\frac{(m \rho)^{k+1}}{(k+1)!} (1-\rho)^{m-k-2}$, so the whole sum is less than twice the last term. But if $k$ is the integer part of $cm\rho$, we have
  \[\log k! = k \log k -k + O(\log k) = cm\rho \log(cm\rho) -cm\rho + O(\log \log m),\]
  \[ \log (m\rho)^k = cm\rho \log(m\rho)+ O(\log \log m)\]
  and
  \[ \log (1-\rho)^{m-k} = -m\rho + O(\frac{1}{m}(\log m)^2).\]
  So
  \[ \log\left( \frac{1}{k!} (m\rho)^k (1-\rho)^{m-k} \right)  = m\rho(-1+c-c\log c) + O(\log \log m).\]
  In particular, if $c$ is small enough so that $(1+\eta) (1-c+c\log c) > 1+\frac \eta 2$, we obtain
  \[ \frac{1}{k!} (m\rho)^k (1-\rho)^{m-k} \leq \frac{1}{m^{1+\frac \eta 2}}\]
  for all $m$ large enough and
  \[ \mathbb P( B(m-1,\rho) \leq cm\rho) \leq \frac{2}{m^{1+\frac \eta 2}}.\]
  By a union bound we obtain
  \[ \mathbb P( \min_{v \in \mathbb{G}(m,\rho)} B(m-1,\rho) \leq cm\rho) \leq 2 m^{- \frac \eta 2}.\]
  This proves the lemma.
\end{proof}
By \cite[Exercise 3.4]{MR1864966}, it is easy to see that the previous lemma is essentially optimal, in the sense that for every $\eta>0$ and $\rho \sim (1+\eta) \frac{\log m}{m}$, the ratio maximal degree/minimal degree converges to a constant $r(\eta) > 1$ as $m \to \infty$, and that $\lim_{\eta \to 0} r(\eta) = \infty$. We shall need, however, that in $L^1$-average, the degree sequence is well concentrated. This is the content of the next lemma.
\begin{lem} \label{lem:average_concentration}
There is a constant $c_0 > 0$ such that the following holds. Let $d(1),\dots,d(m)$ be the degree sequence of an Erd\H{o}s-R\'enyi graph $\mathbb{G}(m,\rho)$, and let $\overline{d} = \frac{1}{m} \sum_i d(i)$. If $\frac{\log m}{m} \leq \rho \leq 1$, then 
\[
  \mathbb P\left[ \sum_i \frac{|d(i) - (m-1) \rho|}{m(m-1)\rho} \geq \frac{c_0}{\sqrt{(m-1)\rho}}\right] \leq e^{-\frac{m}{c_0}}
\]
and 
\[
  \mathbb P\left[ \sum_i \frac{|d(i) - \overline{d}|}{m\overline{d}} \geq \frac{c_0}{\sqrt{(m-1)\rho}}\right] \leq e^{-\frac{m}{c_0}}.
\]
\end{lem}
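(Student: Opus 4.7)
The plan is to analyse the Doob martingale of $F := \sum_i |d(i) - (m-1)\rho|$ as the $N = \binom{m}{2}$ independent edge variables are revealed one by one, exploiting the Bernoulli structure to get a sharp bound on the conditional variance of each increment, and then to conclude by Freedman's inequality for bounded martingales. The second inequality will follow from the first by the triangle inequality together with concentration of $\overline{d}$.

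\emph{First inequality.} Since each $d(i)$ is $\mathrm{B}(m-1,\rho)$-distributed, Cauchy--Schwarz gives $\mathbb{E}|d(i) - (m-1)\rho| \leq \sqrt{(m-1)\rho(1-\rho)}$, so $\mathbb{E} F \leq m\sqrt{(m-1)\rho}$. Fix an enumeration $e_1,\dots,e_N$ of the edges, let $\mathcal F_t = \sigma(e_1,\dots,e_t)$, and set $M_t = \mathbb{E}[F \mid \mathcal F_t]$, so $M_0 = \mathbb{E} F$, $M_N = F$, and the increments $D_t = M_t - M_{t-1}$ form a martingale difference sequence. Flipping one edge changes two of the $d(i)$'s by one, so $|D_t| \leq 2$. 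The key step is the variance bound: writing $g_t(x) = \mathbb{E}[F \mid \mathcal F_{t-1}, e_t = x]$, we have $D_t = (e_t - \rho)(g_t(1) - g_t(0))$, whence
\[ \mathbb{E}[D_t^2 \mid \mathcal F_{t-1}] = \rho(1-\rho)(g_t(1) - g_t(0))^2 \leq 4\rho(1-\rho), \]
because $|g_t(1) - g_t(0)| \leq 2$. Summing gives the predictable quadratic variation $V_N \leq 4\rho(1-\rho)\binom{m}{2} \leq 2\rho m^2$. Freedman's inequality then yields
\[ \mathbb{P}\bigl(F - \mathbb{E} F \geq t\bigr) \leq \exp\!\left(-\frac{t^2}{4\rho m^2 + (4/3)t}\right). \]
Plugging in $t = (c_0 - 1) m\sqrt{(m-1)\rho}$ and using $(m-1)\rho \geq \log m - 1$ to dominate the linear term by the quadratic one, a routine computation shows that for $c_0$ large enough the exponent is at least $m/c_0$, which proves the first inequality.

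\emph{Second inequality.} Combining $|d(i) - \overline{d}| \leq |d(i) - (m-1)\rho| + |\overline{d} - (m-1)\rho|$ with the observation $m|\overline{d} - (m-1)\rho| = |\sum_i (d(i) - (m-1)\rho)| \leq \sum_i |d(i) - (m-1)\rho|$, we get $\sum_i |d(i) - \overline{d}| \leq 2\sum_i |d(i) - (m-1)\rho|$. A standard Chernoff bound on the total edge count $|E| = \sum_{i<j} e_{ij}$ shows $\overline{d} \geq (m-1)\rho/2$ with probability at least $1 - \exp(-\binom{m}{2}\rho/8)$, which is bounded by $e^{-m/c_0}$ for $c_0$ large (using $(m-1)\rho \geq \log m - 1$). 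On that event, dividing through by $m\overline{d}$ shows that the sum in the second inequality is at most $4$ times the one in the first, so the second inequality follows from the first applied with $c_0/4$ in place of $c_0$, after enlarging $c_0$.

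\emph{Main obstacle.} The crucial step is realising that the conditional variance of each martingale increment is $O(\rho)$ rather than the naive $O(1)$, thanks to the factor $\rho(1-\rho) = \mathrm{Var}(e_t)$. Without this refinement, the bounded differences estimate $|D_t| \leq 2$ combined with Azuma--Hoeffding would only give $\exp(-\Omega(m\rho))$, which degenerates to $\exp(-\Omega(\log m))$ in the sparse regime $\rho \sim \log m/m$ and is therefore far too weak. The Bernoulli variance bound, combined with Freedman's inequality, is precisely what upgrades the concentration from polynomial to exponential in $m$.
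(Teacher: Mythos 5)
Your proof is correct, and it takes a genuinely different route from the one in the paper. Both arguments hinge on the same core fact — that each edge is a Bernoulli$(\rho)$ variable with variance $\rho(1-\rho) = O(\rho)$, which is what upgrades the concentration from scale $e^{-\Omega(m\rho)}$ (the naive bounded-differences estimate) to scale $e^{-\Omega(m)}$ — but the implementations differ. The paper avoids martingale machinery altogether by splitting each degree as $d(i) = d'(i) + d''(i)$, where $d'(i)$ counts edges from $i$ to vertices $j<i$; the point of this split is that $d'(1),\dots,d'(m)$ are \emph{mutually independent} (and likewise the $d''(i)$), so the moment generating function of $X' = \sum_i |d'(i)-\mathbb{E} d'(i)|$ factors, and one concludes by a direct Chernoff bound after optimizing $\lambda$. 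Your proof instead runs the Doob martingale along the $\binom{m}{2}$ edge exposures and feeds the increment bound $|D_t|\le 2$ together with the conditional-variance bound $\mathbb{E}[D_t^2\mid\mathcal F_{t-1}] \le 4\rho(1-\rho)$ into Freedman's inequality. Your approach is arguably more systematic (no ad hoc ordering trick; the separation of ``mean control'' via Cauchy--Schwarz from ``fluctuation control'' via Freedman is conceptually clean), at the cost of invoking a martingale Bernstein inequality rather than keeping the argument entirely self-contained as the paper does. The treatment of the second inequality — $\sum_i|d(i)-\overline d| \le 2\sum_i|d(i)-(m-1)\rho|$ plus a Chernoff lower bound on $\overline d$ — is essentially identical to the paper's, up to a cosmetic difference in how the triangle inequality is applied.
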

\begin{proof}
We will prove both equalities for $m$ sufficiently large, i.e.~we will prove that there exists a $c_0 > 0$ and an $m_0 \in \mathbb{N}$ such that for all $m \geq m_0$, the inequalities above hold. By the monotonicity properties of both inequalities, the inequalities then follow for all $m$, possibly after replacing $c_0$ by a larger constant.

Let us first prove the first inequality, the proof of which is similar to the proof of Chernoff's inequality: We obtain tight concentration bounds from exponential moments. First, note that there exists an $a > 0$ such that $e^x \leq 1 + x + ax^2$ for all $x \in [-2,2]$. It follows that for every $\rho \in (0,1)$ and $\lambda \in [-1,1]$, whenever $\varepsilon$ is a random variable equal to $1$ with probability $\rho$ and $0$ with probability $1-\rho$, then 
\[
  \mathbb E[e^{\lambda (\varepsilon-\mathbb E[\varepsilon])}] \leq 1 + a\rho(1-\rho)\lambda^2 \leq e^{a \rho \lambda^2}.
\]
As a consequence, if $Y$ is a binomial random variable $B(n,\rho)$, then
\begin{equation} \label{eq:exp_moment_Binomial}
  \mathbb E[e^{\lambda (Y-\mathbb E[Y])}] \leq e^{a \rho n \lambda^2}.
\end{equation}
Write $d(i) = d'(i)+d''(i)$, where $d'(i)$ is the number of edges between $i$ and a vertex $j<i$, and $d''(i)$ is the number of edges between $i$ and a vertex $j>i$. In this way, the random variables $d'(1),\ldots,d'(m)$ are independent, and so are the random variables $d''(1),\ldots,d''(m)$. Write $X'=\sum_i |d'(i) - \mathbb E[d'(i)]|$ and $X''=\sum_i |d''(i) - \mathbb E[d''(i)]|$. 

By \eqref{eq:exp_moment_Binomial}, it follows that for $\lambda \in [0,1]$, we have
\begin{eqnarray*}
  \mathbb E[e^{\lambda |d'(i) - \mathbb E[d'(i)]|}] &\leq& \mathbb E[e^{\lambda (d'(i) - \mathbb E[d'(i)])}]+\mathbb E[e^{-\lambda (d'(i) - \mathbb E[d'(i)])}] \\&\leq& 2 e^{a \rho \lambda^2(i-1)},
\end{eqnarray*}
and by independence, we have
\[
  \mathbb E[e^{\lambda X'}] \leq 2^{m} e^{\frac{a}{2} \rho \lambda^2 m(m-1)}.
\]
Fix $c > 4\sqrt{a\log2}$. Using (the exponential version of) Chebyshev's inequality, we deduce that for every $\lambda \in [0,1]$,
\[
  \mathbb P[ X' \geq \frac{c}{2} m \sqrt{(m-1) \rho}] \leq \mathbb E[e^{\lambda (X'-\frac{c}{2} m \sqrt{(m-1) \rho})}] \leq 2^m e^{ \frac{a}{2} \rho \lambda^2 m(m-1) - \frac{c}{2} m \sqrt{(m-1) \rho} \lambda}.
\]
Taking $\lambda = \frac{c}{2a \sqrt{(m-1)\rho}}$ (which is indeed in $[0,1]$ if $m \geq e^{\frac{c^2}{4a^2}}$, because we assumed that $\frac{\log m}{m} \leq \rho \leq 1$), we obtain
\[
  \mathbb P[ X' \geq \frac{c}{2} m \sqrt{(m-1) \rho}] \leq \mathbb E[e^{\lambda (X'-\frac{c}{2} m \sqrt{(m-1) \rho})}] \leq 2^m e^{ -\frac{c^2}{8a}m}.
\]
Since the random variables $X'$ and $X''$ are identically distributed, we have the same inequality for $X''$. Hence, taking into account that $X := \sum_i |d(i) - (m-1)\rho| \leq X'+X''$, we obtain
\[
  \mathbb P[ X \geq c m \sqrt{(m-1) \rho}] \leq 2^{m+1} e^{ -\frac{c^2}{8a}m} \leq e^{(2\log2-\frac{c^2}{8a})m}
\]
for every $m \geq e^{\frac{c^2}{4a^2}}$. The first inequality of the lemma follows.

The second inequality is a direct consequence of the first one. Indeed, by the triangle inequality, we have
\[
  \sum_i |d(i) - \overline{d}| \leq \frac{1}{m} \sum_{i,j} |d(i) - d(j)| \leq 2 \sum_i |d(i) - (m-1) \rho|,
\]
and therefore,
\begin{equation} \label{eq:relation_between_two_averages}
  \sum_i \frac{|d(i) - \overline{d}|}{m\overline{d}} \leq \frac{2(m-1)\rho}{ \overline d} \sum_i \frac{|d(i) - (m-1) \rho|}{m(m-1)\rho}.
\end{equation}
Let $A$ be the event where $\sum_i \frac{|d(i) - (m-1) \rho|}{m(m-1)\rho} \leq \frac{c_0}{\sqrt{(m-1)\rho}}$, so that by the first inequality (which we have just proved), we have $\mathbb P[A] \geq 1 - e^{-\frac{m}{c_0}}$. On $A$, we have
\[
  \overline{d} \geq (m-1)\rho - \frac{1}{m}\sum_i |d(i) - (m-1) \rho| \geq (m-1)\rho \left(1- \frac{c_0}{\sqrt{(m-1)\rho}}\right),
\]
which is greater than $\frac 1 2 (m-1) \rho$ for $m \geq m_0 = e^{2c_0+1}+1$. So by \eqref{eq:relation_between_two_averages}, we obtain
\[
  \sum_i \frac{|d(i) - \overline{d}|}{m\overline{d}} \leq \frac{4 c_0}{\sqrt{(m-1)\rho}}
\]
on $A$ if $m \geq m_0$. This proves the second inequality for $m \geq m_0$ with $c_0$ replaced by $4c_0$.
\end{proof}

We will also need that, above the connectivity threshold, Erd\H{o}s-R\'enyi graphs have a good two-sided spectral gap.
\begin{thm}[\cite{hoffmanetal}] \label{thm:hoffmanetal}
  Let $\eta>0$. There are constants $C'$ and a sequence $(u_m)$ of positive real numbers tending to $0$ such that for every $\rho \geq (1+\eta) \frac{\log m}{m}$, with probability $\geq 1-u_m$, an Erd\H{o}s-R\'enyi graph $\mathbb{G}(m,\rho)$ satisfies $\|A^0\| \leq \sqrt{\frac{C'}{m \rho}}$.
\end{thm}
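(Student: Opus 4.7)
The plan is to reduce this bound on the Markov operator to a spectral concentration statement for the adjacency matrix of $\mathbb{G}(m,\rho)$. Let $\tilde{A}$ denote the $m \times m$ symmetric $\{0,1\}$-adjacency matrix (whose entries above the diagonal are independent Bernoulli$(\rho)$), and let $D = \mathrm{diag}(d_\omega(s))$. Through the isometry $L^2(V,\nu) \cong \ell^2(V)$ sending $f$ to $(\sqrt{d_\omega(s)}f(s))_s$, the Markov operator $A_\mathcal{G}$ is unitarily equivalent to the symmetric matrix $M := D^{-1/2}\tilde{A}D^{-1/2}$, whose Perron eigenvector is proportional to $(\sqrt{d_\omega(s)})_s$. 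Bounding $\|A^0\|$ therefore amounts to bounding the norm of $M$ on the orthogonal complement of this distinguished direction.

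I would then use Lemma \ref{lem:concentration_degree} to ensure that, with probability $\geq 1-u_m$, every degree lies in $[c_1 m\rho, c_2 m\rho]$, so that $\|D^{-1/2}\|^2 \leq (c_1 m\rho)^{-1}$. Decomposing $\tilde{A} = \rho(J-I)+E$ with $J$ the all-ones matrix and $E = \tilde{A}-\mathbb{E}[\tilde{A}]$ the centred part, the deterministic summand $\rho(J-I)$ is essentially rank one (in the constant direction) and after conjugation by $D^{-1/2}$ its contribution off the Perron direction is $O(1/(m\rho))$, well below the target. Hence, modulo the degree normalization, the problem reduces to proving the operator-norm bound $\|E\|_{\mathrm{op}} \leq C\sqrt{m\rho}$ with high probability.

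This last estimate is the heart of the matter and the main obstacle, particularly because we need it all the way down to the connectivity threshold $\rho \geq (1+\eta)\log m/m$. The classical tool is the trace/moment method (F\"uredi--Koml\'os, Feige--Ofek, Vu): one bounds $\mathbb{E}[\mathrm{tr}(E^{2k})]$ by a combinatorial enumeration of closed walks of length $2k$ in the complete graph on $m$ vertices, then optimizes $k \sim \log m$ and applies Markov's inequality. The delicate point near the threshold is that a few vertices of abnormally large degree can spawn outlier eigenvalues of $E$, so one first truncates such vertices (as in Feige--Ofek) before running the trace method; this is what \cite{hoffmanetal} carries out carefully. Putting all of this together on the intersection of the favourable events gives $\|M - \lambda_1 P_1\|_{\mathrm{op}} \leq \|D^{-1/2}\|^2 \bigl(\|E\|_{\mathrm{op}} + O(1)\bigr) \leq C'/\sqrt{m\rho}$, which translates back to the desired bound $\|A^0\| \leq \sqrt{C'/(m\rho)}$.
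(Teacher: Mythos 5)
Your reduction is essentially the route the paper indicates in its proof: Theorem \ref{thm:hoffmanetal} is cited directly from \cite{hoffmanetal}, with the alternative derivation being exactly the combination of Lemma \ref{lem:concentration_degree} with a spectral-norm bound on the centred adjacency matrix (the paper invokes Theorem 3.2 of \cite{benaychbordenaveknowles}; you sketch the F\"uredi--Koml\'os/Feige--Ofek trace method, which is what underlies it and what \cite{hoffmanetal} carries out). So the strategy matches.

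However, there is a concrete error in the step where you dispose of the deterministic part. After conjugation, $\rho\, D^{-1/2}J D^{-1/2}$ is rank one in the direction $u = D^{-1/2}\mathbf{1}$, whereas the Perron eigenvector of $M = D^{-1/2}\tilde A D^{-1/2}$ is $v = D^{1/2}\mathbf{1}$. These are \emph{not} the same direction unless all degrees are equal, and the contribution of this rank-one piece to the orthogonal complement of $v$ has norm
\[
  \rho\,\|P_{v^\perp} u\|^2 \;=\; \rho\left( \sum_s d_\omega(s)^{-1} - \frac{m^2}{\sum_s d_\omega(s)}\right).
\]
If you only know $d_\omega(s)\in[c_1 m\rho, c_2 m\rho]$ (which is all Lemma \ref{lem:concentration_degree} gives), this quantity can be as large as a constant of order $\tfrac{(c_2-c_1)^2}{c_1 c_2(c_1+c_2)}$, not $O(1/(m\rho))$. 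In other words, $L^\infty$ degree concentration alone does not make the mean term negligible; a fluctuation-type bound on the degree sequence (e.g.\ $\sum_s(d_\omega(s)-\bar d)^2 = O(m^2\rho)$ with high probability, which does hold and is straightforward from variance/Bernstein estimates, or the $L^1$ concentration of Lemma \ref{lem:average_concentration}) is needed to push $\rho\|P_{v^\perp}u\|^2$ down to $O(1/(m\rho))$ or at least to $O(1/\sqrt{m\rho})$, below or at the level of the target $\sqrt{C'/(m\rho)}$. Without this, the argument as written only gives $\|A^0\| \leq O(1)$, which is vacuous. This is a fixable omission, but it is exactly the point where the passage from adjacency-matrix bounds to Markov-operator bounds is non-trivial and should be spelled out.
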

\begin{proof}
This result was announded in \cite{hoffmanetal} (see \cite{MR2351691} for the statement when $\eta$ is sufficiently large). Formally, the result in \cite{hoffmanetal} deals with the giant component, but in the regime $\rho \geq \frac{(1+\eta)\log m}{m}$, which we are interested in, an Erd\H{o}s-R\'enyi graph $\mathbb{G}(m,\rho)$ is connected with high probability. The theorem also follows by combining Lemma \ref{lem:concentration_degree} and Theorem 3.2 in \cite{benaychbordenaveknowles}. Indeed the results in \cite{benaychbordenaveknowles} deal with the unnormalized adjacency matrix, which is unitarily conjugate to $D^{\frac 1 2} A D^{\frac 1 2}$, where $A$ is the Markov operator and $D$ the matrix with diagonal entries equal to the degrees.
\end{proof}

\subsection{Spectral gap and union of graphs}
We end this section by two results that indicate how a two-sided spectral gap behaves when one takes the union of two (non-random) graphs on the same vertex set.

For a weight $\omega:V \times V \to \mathbb{R}_+$, let $L^2(V,d_\omega)$ denote the space of square-integrable functions on $V$ equipped with the inner product given by
\begin{equation} \label{eq:innerproduct}
 \langle f,g \rangle = \sum_{s \in V} f(s)\overline{g(s)}d_\omega(s).
\end{equation}
(Note that this is different from Section \ref{sec:graphs}, in which we considered $L^2$-functions with respect to a stationary measure $\nu$.)

In the situation of Lemma \ref{lem:spectral_gap_under_perturbation1} and Proposition \ref{prp:spectral_gap_under_perturbation2}, define $\iota_1 \colon L^2(V,d_{\omega_1}+d_{\omega_2}) \to L^2(V,d_{\omega_1})$ and $\iota_2 \colon L^2(V,d_{\omega_1}+d_{\omega_2}) \to L^2(V,d_{\omega_2})$ by the formal identities $\iota_1 f=f$ and $\iota_2 f=f$. For all $f,g \in L^2(V,d_{\omega_1}+d_{\omega_2})$, we have
\begin{equation} \label{eq:markov}
  \langle A_{\omega_1+\omega_2} f, g\rangle = \langle A_{\omega_1} \iota_1f, \iota_1 g\rangle + \langle A_{\omega_2} \iota_2f, \iota_2 g\rangle,
\end{equation}
so that $A_{\omega_1+\omega_2} = \iota_1^* A_{\omega_1} \iota_1+\iota_2^* A_{\omega_2} \iota_2$.

The first lemma deals with the situation when one of the graphs is a small pertubation of the other. We could not locate this precise statement in the literature, but the argument is completely standard (see \cite[Lemma 4.5]{MR3106728}, \cite[Lemma 1.7]{MR3305311} or \cite[Lemma 9.2]{drutumackay}).
\begin{lem} \label{lem:spectral_gap_under_perturbation1}
  Let $V$ be a finite set, let $\omega_1,\omega_2: V \times V \to \R_+$ be two weight functions, and let $d_{\omega_1},d_{\omega_2}$ be the corresponding degree functions. As before, let $A_{\omega_i}^0$ denote the restriction of $A_{\omega_i}$ to $L^2_0(V,d_{\omega_i})$.
  
  If $d_{\omega_2} \leq \delta' d_{\omega_1}$ for some $\delta' > 0$, then
  \begin{equation} \label{eq:norm_in_L^0_under_perturbation}
    \left|\| A_{\omega_1+\omega_2}^0\| - \|A_{\omega_1}^0\|\right| \leq \delta'.
  \end{equation}
\end{lem}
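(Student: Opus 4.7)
The plan is to use the identity $A_{\omega_1+\omega_2} = \iota_1^* A_{\omega_1} \iota_1 + \iota_2^* A_{\omega_2} \iota_2$ (equivalently \eqref{eq:markov}) together with the Rayleigh quotient characterisation $\|A_\omega^0\| = \sup\{ |\langle A_\omega f, f\rangle_{d_\omega}|/\|f\|^2_{d_\omega} : f \perp \mathbf{1} \text{ in } L^2(V,d_\omega)\}$. There are two directions to prove, each by the same bookkeeping strategy.

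For the forward direction $\|A_{\omega_1+\omega_2}^0\| \leq \|A_{\omega_1}^0\| + \delta'$, I would take $f \in L^2_0(V,d_{\omega_1}+d_{\omega_2})$ and split the quadratic form as
\[
  \langle A_{\omega_1+\omega_2} f,f\rangle_{d_{\omega_1}+d_{\omega_2}} = \langle A_{\omega_1}f,f\rangle_{d_{\omega_1}} + \langle A_{\omega_2}f,f\rangle_{d_{\omega_2}}.
\]
The $\omega_2$-term is trivially bounded by $\|f\|^2_{d_{\omega_2}}$ since $\|A_{\omega_2}\| \leq 1$, and the hypothesis $d_{\omega_2} \leq \delta' d_{\omega_1}$ gives $\|f\|^2_{d_{\omega_2}} \leq \frac{\delta'}{1+\delta'}\|f\|^2_{d_{\omega_1}+d_{\omega_2}}$. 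The $\omega_1$-term is the delicate one because $f$ is not mean-zero in $L^2(V,d_{\omega_1})$: decomposing $f = c+g$ in $L^2(V,d_{\omega_1})$ with $g$ mean-zero, one has $\langle A_{\omega_1}f,f\rangle_{d_{\omega_1}} = |c|^2 \sum d_{\omega_1} + \langle A_{\omega_1}g,g\rangle_{d_{\omega_1}}$. The key trick is to rewrite the constraint as $\sum f\, d_{\omega_1} = -\sum f\, d_{\omega_2}$, and then apply Cauchy--Schwarz together with $d_{\omega_2}\leq\delta'd_{\omega_1}$ to obtain $|c|^2 \sum d_{\omega_1} \leq \delta'\|f\|^2_{d_{\omega_2}}$; that is, the ``parasitic'' constant part of $f$ in $L^2(V,d_{\omega_1})$ is absorbed by the $\omega_2$-budget. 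Plugging in and using $\|A_{\omega_1}^0\| \leq 1$ to convert $\|g\|^2_{d_{\omega_1}} \leq \|f\|^2_{d_{\omega_1}} - |c|^2\sum d_{\omega_1}$ into a clean bound, the coefficients rearrange to yield $|\langle A_{\omega_1+\omega_2} f,f\rangle| \leq (\|A_{\omega_1}^0\| + \delta')\|f\|^2_{d_{\omega_1}+d_{\omega_2}}$.

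The reverse inequality $\|A_{\omega_1}^0\| \leq \|A_{\omega_1+\omega_2}^0\| + \delta'$ is handled along the same lines, starting from
\[
  \langle A_{\omega_1}f,f\rangle_{d_{\omega_1}} = \langle A_{\omega_1+\omega_2}f,f\rangle_{d_{\omega_1}+d_{\omega_2}} - \langle A_{\omega_2}f,f\rangle_{d_{\omega_2}}
\]
for $f \in L^2_0(V,d_{\omega_1})$, and now decomposing $f=c'+g'$ in $L^2(V,d_{\omega_1}+d_{\omega_2})$ so as to correct for the fact that $f$ is no longer mean-zero against the sum measure. Here $c'\sum(d_{\omega_1}+d_{\omega_2}) = \sum f\,d_{\omega_2}$, again thanks to $f$ being mean-zero against $d_{\omega_1}$, and Cauchy--Schwarz with the degree bound controls $|c'|^2\sum(d_{\omega_1}+d_{\omega_2})$.

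The main (very mild) obstacle is purely one of accounting: the two subspaces $L^2_0(V,d_{\omega_1})$ and $L^2_0(V,d_{\omega_1}+d_{\omega_2})$ do not coincide as subsets of $\mathbb{C}^V$, so both directions of the inequality require subtracting an appropriate mean before invoking the spectral gap of the relevant operator, and one must verify that the Cauchy--Schwarz estimate combined with $d_{\omega_2}\leq\delta'd_{\omega_1}$ keeps the correction of order $\delta'$ rather than of order $1$. Once this bookkeeping is performed on each side, the errors assemble into the single bound $|\|A_{\omega_1+\omega_2}^0\| - \|A_{\omega_1}^0\|| \leq \delta'$.
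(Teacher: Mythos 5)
Your forward direction is sound and is essentially the paper's argument (a Rayleigh/min-max split of the quadratic form via \eqref{eq:markov}, together with a Cauchy--Schwarz bound on the ``parasitic'' constant); carried through it yields $\|A_{\omega_1+\omega_2}^0\| \leq \|A_{\omega_1}^0\| + \delta'(1-\|A_{\omega_1}^0\|) \leq \|A_{\omega_1}^0\|+\delta'$. The reverse direction, however, does not close to the claimed estimate, and in fact cannot, because that estimate is false. If you actually perform the bookkeeping you describe for $f\in L^2_0(V,d_{\omega_1})$ with $\|f\|_{d_{\omega_1}}=1$, writing $b=\|f\|^2_{d_{\omega_2}}\le\delta'$ and $\beta=\|A_{\omega_1+\omega_2}^0\|$, you find $|c'|^2\sum(d_{\omega_1}+d_{\omega_2})\le\delta' b$ and $\|g'\|^2\le 1+b$, so the quadratic-form split gives $|\langle A_{\omega_1}f,f\rangle_{d_{\omega_1}}|\le \beta(1+b)+b+\delta' b$; optimising over $b\le\delta'$ gives only $\|A_{\omega_1}^0\|\le(1+\delta')(\beta+\delta')$, not $\beta+\delta'$, and the factor $(1+\delta')$ in front of $\beta$ is genuine. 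Concretely, let $\omega_1$ be a $4$-cycle on $V=\{1,2,3,4\}$ (bipartite, so $\|A_{\omega_1}^0\|=1$) and let $\omega_2$ put weight $2\delta'$ on the two diagonals; then $d_{\omega_2}=\delta' d_{\omega_1}$, and an explicit diagonalisation gives $\|A_{\omega_1+\omega_2}^0\|=\frac{1-\delta'}{1+\delta'}$ for $\delta'\le\frac{1}{2}$, hence $\big|\|A_{\omega_1}^0\|-\|A_{\omega_1+\omega_2}^0\|\big|=\frac{2\delta'}{1+\delta'}>\delta'$ for every $\delta'\in(0,1)$.

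Note that the paper's own proof has the same issue: the displayed inequality $\big|\|A_{\omega_1+\omega_2}^0\|-\frac{1}{1+\delta'}\|A_{\omega_1}^0\|\big|\le\frac{\delta'}{1+\delta'}$ is correct and is precisely what your bookkeeping (and the min-max argument) produce, but combined with $\|A_{\omega_1}^0\|\le 1$ it only implies $\big|\|A_{\omega_1+\omega_2}^0\|-\|A_{\omega_1}^0\|\big|\le\frac{2\delta'}{1+\delta'}$, which the example above shows is sharp; so the claim that this is ``a sharper version'' of \eqref{eq:norm_in_L^0_under_perturbation} is inaccurate for $\delta'<1$. This is immaterial for the application in Proposition \ref{prp:links}, where $\delta'=o(1)$ and the extra multiplicative factor is absorbed, but your write-up should aim for the correct bound $\frac{2\delta'}{1+\delta'}$ (or the paper's displayed min-max inequality) rather than asserting \eqref{eq:norm_in_L^0_under_perturbation} as stated.
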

\begin{proof}
Suppose that $d_{\omega_2} \leq \delta' d_{\omega_1}$. Using the standard min-max formulas for the eigenvalues of a self-adjoint matrix, and using that $\|\iota_1\|^2 = \sup_{s \in V} \frac{d_{\omega_1}(s)}{d_{\omega_1}(s)+d_{\omega_2}(s)}$ (and similar for $\|\iota_2\|^2$), we obtain
\[
  \left|\| A_{\omega_1+\omega_2}^0\| - \frac{1}{1+\delta'}\|A_{\omega_1}^0\|\right|\leq  \frac{\delta'}{1+\delta'},
\]
which is just a sharper version of \eqref{eq:norm_in_L^0_under_perturbation}.
\end{proof}

The second result deals with the situation where the degree sequence in each graph is well concentrated in $L^1$-average. When the degree sequence is uniformly ($L^\infty$) concentrated, such a result is well-known (see \cite{MR3106728,MR3305311,drutumackay}), but we will be in the situation where the degree sequence is not $L^\infty$-concentrated, and we need the following strong result. To our knowledge, this result is new, and it is the main result of this section.
\begin{prp}\label{prp:spectral_gap_under_perturbation2}
  Let $V$ be a finite set, let $\omega_1,\omega_2: V \times V \to \R_+$ be two weight functions, and let $d_{\omega_1},d_{\omega_2}$ be the corresponding degree functions. As before, let $A_{\omega_i}^0$ denote the restriction of $A_{\omega_i}$ to $L^2_0(V,d_{\omega_i})$.

  Also, let $D_i = \sum_{s \in V} d_{\omega_i}(s)$ be the total degree for $\omega_i$, and let
  \[
    \delta = \sum_{s \in V} \frac{\left|d_{\omega_1}(s)-\frac{D_1}{|V|}\right|}{D_1} + \frac{\left|d_{\omega_2}(s)-\frac{D_2}{|V|}\right|}{D_2}.
  \]
  Then
  \[ \| A_{\omega_1+\omega_2}^0\| \leq \delta+  (1-\delta) \max(\|A_{\omega_1}^0\|,\|A_{\omega_2}^0\|).\]
\end{prp}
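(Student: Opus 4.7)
The plan is to bound the sesquilinear form $\langle A_{\omega_1+\omega_2} f, f\rangle$ for a unit vector $f \in L^2_0(V, d_{\omega_1}+d_{\omega_2})$, using the fact that $A^0_{\omega_1+\omega_2}$ is self-adjoint to conclude. Set $w = d_{\omega_1}+d_{\omega_2}$, $p_i = D_i/(D_1+D_2)$, and $M = \max(\|A^0_{\omega_1}\|, \|A^0_{\omega_2}\|)$. First I would use the identity \eqref{eq:markov} to split $\langle A_{\omega_1+\omega_2}f,f\rangle = \sum_i \langle A_{\omega_i} \iota_i f, \iota_i f\rangle$, and then, for each $i$, decompose $\iota_i f = g_i + \mu_i \mathbf{1}$ into its $L^2(V, d_{\omega_i})$-mean $\mu_i = D_i^{-1}\sum_s d_{\omega_i}(s)f(s)$ and a mean-zero part $g_i$. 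Since $A_{\omega_i}\mathbf{1} = \mathbf{1}$ and $A_{\omega_i}$ is self-adjoint, the cross terms vanish and we obtain the Pythagorean-type identity $\langle A_{\omega_i}\iota_i f,\iota_i f\rangle = \langle A^0_{\omega_i} g_i, g_i\rangle + |\mu_i|^2 D_i$, together with $\|\iota_i f\|_{L^2(V,d_{\omega_i})}^2 = \|g_i\|^2 + |\mu_i|^2 D_i$.

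Summing over $i$ and writing $\beta := |\mu_1|^2D_1+|\mu_2|^2D_2$, the relation $\sum_i \|\iota_i f\|^2 = \|f\|^2_{L^2(V,w)} = 1$ gives $\sum_i \|g_i\|^2 = 1 - \beta$. Applying the operator norm bound to each $\langle A^0_{\omega_i} g_i, g_i\rangle$ and the triangle inequality, we get
\[
|\langle A_{\omega_1+\omega_2}f, f\rangle| \leq M(1-\beta) + \beta = M + (1-M)\beta.
\]
Since $M \leq 1$, the theorem reduces to establishing the key inequality $\beta \leq \delta$, which is the main technical obstacle.

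To prove $\beta \leq \delta$, the starting point is the mean-zero constraint $\langle f,\mathbf{1}\rangle_{L^2(V,w)} = 0$, equivalent to $D_1\mu_1 + D_2\mu_2 = 0$, so that $\beta = |D_1\mu_1|^2/(p_1p_2(D_1+D_2))$. Setting $a_i(s) = d_{\omega_i}(s) - D_i/|V|$, I would rewrite
\[
D_1\mu_1 = \sum_s \bigl(d_{\omega_1}(s) - p_1 w(s)\bigr)f(s) = \sum_s \bigl(p_2 a_1(s) - p_1 a_2(s)\bigr)f(s),
\]
where the second equality uses $p_2 D_1 = p_1 D_2$ to cancel the constants. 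A weighted Cauchy--Schwarz then yields
\[
|D_1\mu_1|^2 \leq \sum_s \frac{(p_2 a_1(s)-p_1 a_2(s))^2}{w(s)}.
\]
Next, the identity $p_2 a_1(s) - p_1 a_2(s) = p_2 d_{\omega_1}(s) - p_1 d_{\omega_2}(s)$ together with $p_1,p_2 \leq 1$ gives the pointwise bound $|p_2 a_1(s)-p_1 a_2(s)| \leq p_2 d_{\omega_1}(s) + p_1 d_{\omega_2}(s) \leq w(s)$, which allows one to cancel one factor against the $w(s)$ in the denominator; a final triangle inequality separates the two deviations. Summing and using $p_2 D_1 = p_1 D_2 = p_1 p_2(D_1+D_2)$ gives exactly $|D_1\mu_1|^2 \leq p_1p_2(D_1+D_2)\delta$, hence $\beta \leq \delta$.

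The main obstacle is this last chain of inequalities. A naive Cauchy--Schwarz would only produce a bound involving $\sum_s a_i(s)^2$, an $L^2$-concentration of the degrees, whereas the statement requires an $L^1$-concentration. The crucial observation is that the quadratic numerator $(p_2 a_1(s) - p_1 a_2(s))^2$ is pointwise dominated, up to the weight $w(s)$, by its own absolute value; this allows passage from $L^2$ to $L^1$ concentration with no loss of constants, and produces a final estimate with no dependence on the ratio $D_1/D_2$, as required.
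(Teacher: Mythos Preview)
Your proof is correct and follows essentially the same approach as the paper's: the same decomposition $\iota_i f = g_i + \mu_i\mathbf{1}$, the same weighted Cauchy--Schwarz, and the same pointwise domination $|p_2 d_{\omega_1}(s) - p_1 d_{\omega_2}(s)| \leq w(s)$ to pass from $L^2$ to $L^1$ concentration. The paper merely organizes the computation slightly differently, bounding $\|f'_1\|^2 \leq p_2\delta$ and $\|f'_2\|^2 \leq p_1\delta$ separately rather than combining them via the constraint $D_1\mu_1+D_2\mu_2=0$ as you do.
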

\begin{proof}
Denote by $1=\lambda_1^{(i)}\geq \lambda_2^{(i)} \geq \dots \geq \lambda_n^{(i)}$ (with $n=|V|$) the eigenvalues of $A_{\omega_i}$, so that $\|A_{\omega_i}^0\| = \max(\lambda_2^{(i)},-\lambda_n^{(i)})$. We will prove that
\[
    \min(\lambda_n^{(1)},\lambda_n^{(2)})  \leq \langle A_{\omega_1+\omega_2} f,f\rangle \leq \delta + (1-\delta) \max(\lambda_2^{(1)},\lambda_2^{(2)})
\]
  for every $f \in L^2_0(V,d_{\omega_1}+d_{\omega_2})$ of norm one, which directly implies the proposition. The first inequality is obvious by \eqref{eq:innerproduct}. For the second one, write $\iota_1 f = f'_1+f_1''$ where $f_1'$ is the orthogonal projection of $f_1$ onto the constant functions in $L^2(V,d_{\omega_1})$. We have
\[
  \langle A_{\omega_1} \iota_1f, \iota_1 f\rangle \leq \|f_1'\|^2 + \lambda_2^{(1)}\| f_1''\|^2,\]
  and similarly for $\langle A_{\omega_2} \iota_2f, \iota_2 f\rangle$. By \eqref{eq:markov}, we obtain
\[
  \langle A_{\omega_1+\omega_2} f,f\rangle \leq \|f_1'\|^2+\|f'_2\|^2 + \lambda_2^{(1)}\| f_1''\|^2 + \lambda_2^{(2)}\| f_2''\|^2.
\]
Using that
\[
  1 =\|f\|^2 = \|\iota_1 f\|^2 + \|\iota_2 f\|^2 = \|f_1'\|^2+\|f'_2\|^2 + \| f_1''\|^2 + \| f_2''\|^2
\]
and writing $\alpha = \max(\lambda_2^{(1)},\lambda_2^{(2)})$, we obtain
\[
  \langle A_{\omega_1+\omega_2} f,f\rangle \leq \alpha + (\|f_1'\|^2+\|f'_2\|^2)(1-\alpha).
\]
So the lemma will be proved once we show that $\|f_1'\|^2+\|f'_2\|^2 \leq \delta$. Actually, we will show that
\begin{equation}\label{eq:norm_of_projection}
  \|f'_1\|^2 \leq \frac{D_2}{D_1+D_2} \delta
\end{equation}
and
\[
  \|f'_2\|^2 \leq \frac{D_1}{D_1+D_2} \delta.
\]
By symmetry, it suffices to show \eqref{eq:norm_of_projection}. By definition, $f'_1$ is the orthogonal projection onto the constant functions in $L^2(V,d_{\omega_1})$, so it is the constant function equal to $c_1=\frac{1}{D_1} \sum_{s \in V} d_{\omega_1}(s) f(s)$, which we write as the scalar product, in $L^2(V,d_{\omega_1}+d_{\omega_2})$, of $f$ with the function $s \mapsto \frac{d_{\omega_1}(s)}{D_1(d_{\omega_1}(s)+d_{\omega_2}(s))}$. Using the fact that $f$ is orthogonal to the constant functions (in $L^2(V,d_{\omega_1}+d_{\omega_2})$), this scalar product is equal to the scalar product of $f$ with
\[
  s \mapsto \frac{d_{\omega_1}(s)}{D_1(d_{\omega_1}(s)+d_{\omega_2}(s))} - \frac{1}{D_1+D_2} = \frac{d_{\omega_1}(s) D_2 - d_{\omega_2}(s) D_1}{D_1(D_1+D_2)(d_{\omega_1}(s)+d_{\omega_2}(s))}.
\]
By the Cauchy-Schwarz inequality in $L^2(V,d_{\omega_1}+d_{\omega_2})$, we obtain
\[
|c_1|^2 \leq \|f\|^2 \sum_{s \in V} \frac{|d_{\omega_1}(s) D_2 - d_{\omega_2}(s) D_1|^2}{D_1^2 (D_1+D_2)^2 (d_{\omega_1}(s)+d_{\omega_2}(s))}.
\]
Using the following estimate:
\begin{eqnarray*}
  |d_{\omega_1}(s) D_2 - d_{\omega_2}(s) D_1|^2 &\leq& (d_{\omega_1}(s) D_2 + d_{\omega_2}(s) D_1) |d_{\omega_1}(s) D_2 - d_{\omega_2}(s) D_1|\\ &\leq& (d_{\omega_1}(s)+d_{\omega_2}(s))(D_1+D_2) |d_{\omega_1} (s) D_2 - d_{\omega_2}(s) D_1|,
\end{eqnarray*}
we obtain
\[
  |c_1|^2 \leq \sum_{s \in V} \frac{|d_{\omega_1}(s) D_2 - d_{\omega_2}(s) D_1|}{D_1^2 (D_1+D_2)}.
\]
By the triangle inequality, we have
\[
  |d_{\omega_1}(s) D_2 - d_{\omega_2}(s) D_1| \leq \left|d_{\omega_1}(s)-\frac{D_1}{n}\right| D_2 + \left|d_{\omega_2}(s)-\frac{D_2}{n}\right| D_1,
\]
where $n=|V|$ (as before). Finally, summing over $s$ yields
\[
  |c_1|^2 \leq \frac{D_2}{D_1 (D_1+D_2)} \delta.
\]
Since
\[
  \|f'_1\|^2 = \sum_{s \in V} d_{\omega_1}(s) |c_1|^2 = D_1 |c_1|^2,
\]
we obtain \eqref{eq:norm_of_projection}, which concludes the proof of the lemma.
\end{proof}

\section{Fixed point properties for random groups} \label{sec:randomgroups}
In this section, we apply our spectral criterion (Theorem \ref{thm:criterionuc}) to random groups in the triangular model. In particular, we will prove Theorem \ref{thm:maintheoremrandomgroups} and Corollary \ref{cor:randomgroupslp}.

Let $S=\{s_1,\ldots,s_m\}$. Roughly speaking, a random group generated by $S$ is a group given by a representation $\langle S \vert R \rangle$, where $R$ is a set of relators, i.e.~words in $S \cup S^{-1}$, that are chosen randomly with respect to some probability measure. In what follows, we only consider relators that are cyclically reduced, i.e.~relators of the form $r=s_1...s_l$ with $s_i \neq s_{i+1}^{-1}$ for $i \in \{1,\ldots,l-1\}$ and $s_l \neq s_1^{-1}$. The number of cyclically reduced words of length $3$ is $(2m-1)^3+1$.

The triangular density model $\mathcal{M}(m,d)$ was introduced by \.{Z}uk in \cite{MR1995802}. For a fixed density $d \in (0,1)$, a group in the model $\cM(m,d)$ is a group $\Gamma = \langle S \vert R \rangle$, where $|S|=m$ and $R$ is a set of relators, chosen uniformly among all subsets of cardinality $(2m-1)^{3d}$ (rounded to the nearest integer) of the set of cyclically reduced relators of length $3$. A property $P$ for groups is said to hold with overwhelming probability (w.o.p.) in the triangular density model $\cM(m,d)$ if
\[
  \lim_{m \to \infty} \mathbb{P}(\Gamma \textrm{ in } \cM(m,d) \textrm{ has } P)=1.
\]
In the proofs below, we will use the following -- to our purposes more convenient -- version of the triangular model, which was also used by Dru\c{t}u and Mackay \cite{drutumackay}.
\begin{dfn}
  For natural numbers $m$ and $N$ with $N \leq (2m-1)^3+1$, a group in the triangular model $\cM'(m,N)$ is a group $\Gamma = \langle S \vert R \rangle$, where $|S|=m$ and $R$ is a random set of relators, chosen uniformly among all subsets of cardinality $N$ of the set of cyclically reduced relators of length $3$.
\end{dfn}
A closely related model is the binomial triangular model $\Gamma(m,\rho)$ (see \cite{MR3305311}).
\begin{dfn}
  For $\rho \in (0,1)$, a group in the binomial triangular model $\Gamma(m,\rho)$ is a group $\Gamma = \langle S \vert R \rangle$, where $|S|=m$ and $R$ is a random set of relators, where each cyclically reduced relator of length $3$ is chosen independently with probability $\rho$.
\end{dfn}
A property $P$ for groups is said to hold with overwhelming probability (w.o.p.) in the binomial triangular model $\Gamma(m,\rho)$ if
\[
  \lim_{m \to \infty} \mathbb{P}(\Gamma \textrm{ in } \Gamma(m,\rho) \textrm{ has } P)=1.
\]
In the model $\cM'(m,N)$, each relator appears with probability $\frac{N}{(2m-1)^3+1}$, so the models $\cM'(m,N)$ and $\Gamma(m,\rho)$ are closely related when $N$ and $\rho$ are related through $\rho \sim \frac{N}{(2m-1)^{3}}$.

In order to prove structural properties in the binomial triangular model, one can typically rely on results on Erd\H{o}s-R\'enyi graphs. In particular, we will heavily rely on the results that we proved in Section \ref{sec:ergraphs}.\\

We prove the following result for fixed point properties of actions of random groups in the binomial triangular model, and then we explain how it implies Theorem \ref{thm:maintheoremrandomgroups}.
\begin{thm} \label{thm:binom_triang_model}
Let $\eta>0$. There is a constant $B > 0$ and a sequence $(u_m)$ of positive real numbers tending to $0$ such that the following holds: Let $m \in \N$ and $\rho \in (0,1)$. If $\rho \geq \frac{(1+\eta) \log m}{8 m^2}$, then, with probability $\geq 1-u_m$, a group in $\Gamma(m,\rho)$ has property (F$_X$) for every uniformly curved Banach space satisfying $\rho m^2 \varepsilon(X)^2 \geq B$. In particular, the latter is the case when $\varepsilon(X) \geq \sqrt{\frac{8B}{\log m}}$.
\end{thm}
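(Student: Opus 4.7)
By Theorem \ref{thm:criterionuc}, it is enough to establish a bound of the form $\|A_L\|_{B(L_0^2(L,\nu))} \leq C/\sqrt{\rho m^2}$ (for some constant $C = C(\eta)$) on the Markov operator of the single link $L$ of the Cayley $2$-complex of $\Gamma$, with probability at least $1-u_m$ for a sequence $u_m \to 0$. This link is a weighted graph on the vertex set $V = S \sqcup S^{-1}$ of size $2m$, in which each cyclically reduced relator $r_1 r_2 r_3$ (included independently with probability $\rho$) contributes the three edges $\{r_1, r_3^{-1}\}, \{r_2, r_1^{-1}\}, \{r_3, r_2^{-1}\}$ coming from its three corners.

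I split $L = L^{(1)}+L^{(2)}+L^{(3)}$ according to corner type. Within each $L^{(j)}$, distinct edges are supported by disjoint families of relators, so the edge weights are independent; a count shows that for generic endpoints the number of producing relators is $2(2m-2) \sim 4m$, so each edge has expected weight $\sim 4m\rho$. Under the assumption $\rho m^2 \geq (1+\eta)(\log m)/8$, edges of multiplicity $\geq 2$ occur with probability $O(m^2 \rho^2)$ per edge, and their total weighted contribution is of order $O(\log m /m)$ times the total weight of $L^{(j)}$, so Lemma \ref{lem:spectral_gap_under_perturbation1} allows me to replace $L^{(j)}$ by the underlying simple graph, which is (essentially) an Erd\H{o}s-R\'enyi graph $\mathbb{G}(2m, p)$ with $p \sim 4m\rho$. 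Because $2m \cdot p \sim 8 m^2 \rho \geq (1+\eta) \log m$, Theorem \ref{thm:hoffmanetal} then yields $\|A_{L^{(j)}}^0\| \leq C_1/\sqrt{\rho m^2}$ with probability $1-o(1)$, for each $j \in \{1,2,3\}$.

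To assemble these three bounds into an estimate on $L = L^{(1)} + L^{(2)} + L^{(3)}$, I iterate Proposition \ref{prp:spectral_gap_under_perturbation2}, whose defect term $\delta$ is bounded by the $L^1$-concentration of the degrees in each $L^{(j)}$. This is precisely what Lemma \ref{lem:average_concentration} delivers in our regime: $\delta = O(1/\sqrt{\rho m^2})$ with probability $1-o(1)$. A union bound therefore yields $\|A_L^0\| \leq C_2/\sqrt{\rho m^2}$ with probability at least $1-u_m$, and choosing $B = C_2^2$ guarantees $\|A_L^0\| < \varepsilon(X)$ whenever $\rho m^2 \varepsilon(X)^2 \geq B$; the final clause follows by substituting the minimal allowed value $\rho = (1+\eta)(\log m)/(8m^2)$ into this inequality. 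The main technical difficulty, in my view, lies in the transfer from the weighted graph $L^{(j)}$ to a genuine Erd\H{o}s-R\'enyi graph to which Theorem \ref{thm:hoffmanetal} applies as a black box; in particular, one has to deal with the mild inhomogeneity of the edge probabilities across the link and with the few degenerate vertex pairs ($x=y$ or $x=y^{-1}$) where the relator count differs, but both are absorbed into the constants by routine estimates.
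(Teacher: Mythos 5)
Your overall architecture matches the paper's: decompose the link into the three corner graphs $L^1,L^2,L^3$, recognize each as essentially an Erd\H{o}s--R\'enyi graph and invoke Theorem~\ref{thm:hoffmanetal}, recombine using Proposition~\ref{prp:spectral_gap_under_perturbation2} together with the $L^1$-degree concentration of Lemma~\ref{lem:average_concentration}, and transfer the spectral bound back to the weighted link via Lemma~\ref{lem:spectral_gap_under_perturbation1}. However, there are two genuine gaps.

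First, your application of Lemma~\ref{lem:spectral_gap_under_perturbation1} is not justified. That lemma requires a \emph{pointwise} degree bound $d_{\omega_2}(s) \le \delta'\, d_{\omega_1}(s)$ for every vertex $s$, whereas you only argue that the ``total weighted contribution'' of multiple edges is a small fraction of the total weight. A small total-weight ratio does not prevent a single vertex from having an atypically large extra degree, which would destroy the conclusion of Lemma~\ref{lem:spectral_gap_under_perturbation1}. The paper addresses exactly this in Claim~(iii) of the proof of Proposition~\ref{prp:links}: it shows that with probability $1-o(1)$ all multiplicities are at most $4$, and then bounds $\sup_s d_{\omega'-\omega''}(s)$ by binomial concentration. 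A similar pointwise argument is needed before Lemma~\ref{lem:spectral_gap_under_perturbation1} can be invoked.

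Second, and relatedly, the paper's detailed estimate only covers $\rho \le m^{-1.42}$ (and for $\rho > m^{-1.42}$ the group is finite w.o.p., so the statement is vacuous). This restriction is genuinely used: for instance, the bound on multiplicities uses that $\mathbb{P}(\omega'_i(s,t)>4) = O((\rho m)^5) = O(m^{-2.1})$ survives a union bound over $O(m^2)$ pairs, and the concentration step uses $m^3\rho^2 = o(\sqrt{m^2\rho})$, both of which fail for large $\rho$. Your proposal treats the entire range $\rho \ge \frac{(1+\eta)\log m}{8m^2}$ with a single estimate and says nothing about the high-$\rho$ regime; you need to split off $\rho > m^{-1.42}$ (trivial by finiteness w.o.p.) before the ``routine estimates'' you appeal to will actually go through. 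Related to this, ``essentially an Erd\H{o}s--R\'enyi graph'' must be made precise: the underlying simple graph of $L^i$ has slightly different edge probabilities on the $m$ pairs $\{s,s^{-1}\}$, so Theorem~\ref{thm:hoffmanetal} does not apply directly; the paper's coupling (add independent $B(2,\rho)$ noise to equalize the binomials, then cap at~$1$) is what turns $L^i$ into a genuine $\mathbb{G}(2m,\rho')$ plus a pointwise-small perturbation.
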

By the relationship between the models $\Gamma(m,\rho)$ and $\cM'(m,N)$ (see \cite[Section 10]{drutumackay}) this theorem immediately implies the following result.
\begin{cor} \label{cor:triangularmodel}
Let $\eta>0$. There is a constant $B'$ and a sequence $(u_m)$ of positive real numbers tending to $0$ such that the following holds: Let $m \in \N$ and $N \in [(1+\eta) m \log m,(2m-1)^3+1]$. With probability $\geq 1-u_m$, a group in $\cM'(m,N)$ has property (F$_X$) for every uniformly curved Banach space satisfying $\varepsilon(X) \geq \sqrt{\frac{B'm}{N}}$. In particular, the latter is the case when $\varepsilon(X) \geq \sqrt{\frac{B'}{\log m}}$.
\end{cor}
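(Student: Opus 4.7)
The plan is to deduce Corollary \ref{cor:triangularmodel} from Theorem \ref{thm:binom_triang_model} via the standard comparison between the binomial triangular model $\Gamma(m,\rho)$ and the uniform triangular model $\cM'(m,N)$. The crucial observation I will use is that property (F$_X$) is monotone under addition of relators: if $R \subseteq R'$ are two relator sets on the same alphabet, then $\langle S\mid R'\rangle$ is a quotient of $\langle S\mid R\rangle$, and every affine isometric action of a quotient lifts to one of the original group, so a fixed point for the quotient's action comes from a fixed point for the original action. Coupling $\cM'(m,k)$ and $\cM'(m,N)$ for $k\leq N$ so that the first sits inside the second (sample $R'\sim\cM'(m,N)$, then pick a uniform $k$-subset $R\subseteq R'$), this yields
\[
\mathbb{P}\bigl(\cM'(m,N) \text{ has } (\mathrm{F}_X)\bigr) \;\geq\; \mathbb{P}\bigl(\cM'(m,k) \text{ has } (\mathrm{F}_X)\bigr) \qquad \text{for every } k\leq N.
\]

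Combining this with the elementary fact that conditioning $\Gamma(m,\rho)$ on $|R|=k$ produces exactly the law of $\cM'(m,k)$, summing over $k\leq N$ gives
\[
\mathbb{P}\bigl(\cM'(m,N) \text{ has } (\mathrm{F}_X)\bigr) \;\geq\; \mathbb{P}\bigl(\Gamma(m,\rho) \text{ has } (\mathrm{F}_X)\bigr) - \mathbb{P}_{\Gamma(m,\rho)}\bigl(|R|>N\bigr).
\]
It therefore suffices to choose $\rho$ (as a function of $N$ and $m$) so that Theorem \ref{thm:binom_triang_model} forces the first term to be close to $1$ while the second term tends to $0$.

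Concretely, I would set $\rho := (1-\delta_m)\,\tfrac{N}{(2m-1)^3+1}$ for a sequence $\delta_m \to 0$ with $\delta_m^2 N \to \infty$ (for instance $\delta_m := 1/\log m$, which works because $N\geq (1+\eta)m\log m$ grows faster than $(\log m)^2$). Under the hypothesis $N \geq (1+\eta)m\log m$, a short direct computation yields $\rho \geq \tfrac{(1+\eta/2)\log m}{8m^2}$ for all $m$ large enough, so Theorem \ref{thm:binom_triang_model} applies with $\eta/2$ in place of $\eta$. The spectral condition $\rho m^2 \varepsilon(X)^2 \geq B$ becomes $\varepsilon(X)^2 \geq \tfrac{B((2m-1)^3+1)}{(1-\delta_m)Nm^2}$, which for $m$ large is implied by $\varepsilon(X)^2 \geq B' m/N$ with $B'$ any constant exceeding $8B$ (for instance $B' := 9B$). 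Finally, $|R|$ is binomial with mean $(1-\delta_m)N$ and variance at most $N$, so Chebyshev's inequality gives $\mathbb{P}_{\Gamma(m,\rho)}(|R|>N) \leq 1/(\delta_m^2 N) \to 0$.

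There is no genuine obstacle here: the only work is the bookkeeping of the three parameters $\eta$, $B'$ and $u_m$ between hypothesis and conclusion. The output sequence $u_m$ in the corollary is obtained as the sum of the failure probability in Theorem \ref{thm:binom_triang_model} for this choice of $\rho$ and the Chebyshev tail bound, both of which tend to $0$. This type of transfer between the binomial and uniform models is exactly the one carried out in Section 10 of \cite{drutumackay}, to which I would refer for any remaining routine verifications (in particular for handling uniformly in $m$ the regime where $N$ approaches the saturation $(2m-1)^3+1$, which only makes all concentration estimates easier).
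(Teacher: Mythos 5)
Your proposal is correct and follows the same route as the paper: deduce the uniform model $\cM'(m,N)$ from the binomial model $\Gamma(m,\rho)$ via Theorem \ref{thm:binom_triang_model} and then perform parameter bookkeeping. The only difference is one of exposition: the paper's proof is a one-line appeal to the already-established transfer between the two models (citing \cite{drutumackay}, Section 10), whereas you spell out that transfer in full --- the monotonicity of property~(F$_X$) under quotients, the coupling of $\cM'(m,k)\subseteq\cM'(m,N)$, the observation that $\Gamma(m,\rho)$ conditioned on $|R|=k$ is $\cM'(m,k)$, and the Chebyshev tail bound controlling $\mathbb P_{\Gamma(m,\rho)}(|R|>N)$. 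Your choices $\rho=(1-\delta_m)N/((2m-1)^3+1)$, $\delta_m=1/\log m$, and $B'=9B$ all check out; in particular $(2m-1)^3+1\leq 8m^3$ gives the needed lower bound $\rho\geq(1+\eta')\log m/(8m^2)$ and the inequality $\varepsilon(X)^2\geq B'm/N\Rightarrow\rho m^2\varepsilon(X)^2\geq B$ for $m$ large. So this is the same argument, with the transfer made self-contained rather than cited.
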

\begin{proof}
  If $\rho \sim \frac{N}{(2m-1)^{3}}$, then there exists an $\eta'>0$ such that the condition $N \geq (1+\eta) m \log m$ implies $\rho \geq \frac{(1+\eta') \log m}{8 m^2}$. 
\end{proof}
We can now prove Theorem \ref{thm:maintheoremrandomgroups}.
\begin{proof}[Proof of Theorem \ref{thm:maintheoremrandomgroups}]
The proof is a direct application of Corollary \ref{cor:triangularmodel}. Indeed, let $d$ satisfy \eqref{eq:condition_on_d} for some $\eta \in (0,2)$, i.e.
\[
  d \geq \frac 1 3 + \frac{\log \log m - \log(2-\eta)}{3 \log m}.
\]
It follows that there is an $\eta'>0$ such that for all $m$ sufficiently large, we have
\[
(2m-1)^{3d} \geq (1+\eta') m \log m,
\]
which implies the result.
\end{proof}
We deduce Corollary \ref{cor:randomgroupslp}.
\begin{proof}[Proof of Corollary \ref{cor:randomgroupslp}]
This is a direct application of Theorem \ref{thm:maintheoremrandomgroups}, since for every $\eta>0$, for all $m$ sufficiently large, we have the following:
  \begin{itemize}
  \item The condition $p \leq \sqrt{\frac{(3d-1) \log m}{\eta + \log 2}}$ implies $2 p^{-\frac p 2} 2^{-\frac{p^2}{2}} \geq \sqrt{\frac{C m }{(2m-1)^{3d}}}$.   
  \item For every real $K \geq 1$, the condition $p \leq \sqrt{\frac{(3d-1) \log m}{\eta + \log (2K)}} - \frac 1 2$ implies
    \[( 2 p^{-\frac p 2} 2^{-\frac{p^2}{2}} )/K^{\frac{p(p+1)}{2}} \geq \sqrt{\frac{C m }{(2m-1)^{3d}}}.\]
  \end{itemize}
  In particular using the explicit estimates for $\varepsilon(X)$ in Remark \ref{rem:explicit_computation_of_epsilonX}, we obtain the following:
  \begin{itemize}
  \item The condition $p \leq \sqrt{\frac{(3d-1) \log m}{\eta + \log 2}}$ implies $ \varepsilon(L^p) \geq \sqrt{\frac{C m }{(2m-1)^{3d}}}$.   
  \item For every $\alpha \geq 1$ and every Banach space $X$ which is $\alpha$-isomorphic to a subquotient to a $\frac 2 p$-Hilbertian space, the condition $p \leq \sqrt{\frac{(3d-1) \log m}{\eta + \log (2\alpha)}} - \frac 1 2$ implies $\varepsilon(X) \geq \sqrt{\frac{C m }{(2m-1)^{3d}}}$.
  \end{itemize}
\end{proof}
Let us now prove Theorem \ref{thm:binom_triang_model}. For every triangular presentation $\Gamma = \langle S \vert R \rangle$, i.e.~a presentation in which every relator has length $3$, the associated Cayley complex is a simplicial complex, in which the links are all isomorphic to the graph $L:=L(S,R)$ decribed as follows. The graph $L$ has vertex set $S \cup S^{-1}$ and edges $\{s_x^{-1},s_y\}$, $\{s_y^{-1},s_z\}$ and $\{s_z^{-1},s_x\}$ whenever $s_xs_ys_z \in R$. Note that the edges come in three types, corresponding to the order in which the generators $s_x$, $s_y$ and $s_z$ occur in the relation $s_xs_ys_z$. This order yields a decomposition of $L$ into three graphs $L^1$, $L^2$ and $L^3$, where $L^i$ has the same vertices as $L$, but only the edges corresponding to the appropriate place in the relation. Since such links often have multiple edges, we will work in the setting of weighted graphs.

When $\rho \geq m^{-1.42}$,  $\Gamma(m,\rho)$ is finite with overwhelming probability \cite{MR1995802}, and in particular Theorem \ref{thm:binom_triang_model} holds for trivial reasons. For $\rho \leq m^{-1.42}$ Theorem \ref{thm:binom_triang_model} is an immediate consequence of Theorem \ref{thm:criterionuc} and of the following result.
\begin{prp} \label{prp:links}
Let $\eta>0$. There is a constant $C$ and a sequence $(u_m)$ tending to $0$ such that the following holds: Let $m \in \N$ and $\rho \in (0,m^{-1.42})$. If $\rho \geq \frac{(1+\eta) \log m}{8m^2}$, then the link $L:=L(S,R)$ of a group presentation in $\Gamma(m,\rho)$ satisfies
  \[
    \|A_L\|_{L^2_0(L,\nu)} \leq \sqrt{\frac{C}{\rho m^2}}
  \]
with probability at least $1-u_m$.
\end{prp}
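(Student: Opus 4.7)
The link $L$ has vertex set $V = S \cup S^{-1}$ of cardinality $2m$ and decomposes as a weighted sum $L = L^1 + L^2 + L^3$, where each relation $s_x s_y s_z \in R$ contributes one edge to each $L^i$ -- respectively $\{s_x,s_y^{-1}\}$, $\{s_y,s_z^{-1}\}$ and $\{s_z,s_x^{-1}\}$. The plan is to bound $\|A_{L^i}^0\|$ separately for each $i$ by comparison with an Erd\H{o}s--R\'enyi graph, and to combine the three bounds via Proposition \ref{prp:spectral_gap_under_perturbation2}.

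To analyze each $L^i$, observe that in the binomial model the edges of $L^i$ have independent multiplicities, since distinct edges arise from disjoint sets of potential relations. For a generic pair $\{u,v\}$ with $u \neq \pm v$, the multiplicity of $\{u,v\}$ in $L^1$ is distributed as $B(4m-4,\rho)$ (it counts relations of the forms $s_u s_{-v} s_z$ or $s_v s_{-u} s_z$ lying in $R$), while the case $v=-u$ gives the slightly larger $B(4m-2,\rho)$; the analysis for $L^2$ and $L^3$ is symmetric. Let $\widetilde{L^i}$ denote the $0/1$-graph obtained by truncating multiplicities to $1$. Then $\widetilde{L^i}$ has independent edges with probabilities $p_{uv} = 1-(1-\rho)^{N_{uv}} \approx 4m\rho$ varying only by an $O(1/m)$ relative amount across pairs, so it is extremely close to the Erd\H{o}s--R\'enyi graph $\mathbb{G}(2m,p)$ with $p = 4m\rho$. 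The upper constraint $\rho \leq m^{-1.42}$ guarantees $p = o(1)$ and, by a Chebyshev bound on the rare edges of multiplicity $\geq 2$, that the excess weighted graph $E^i := L^i - \widetilde{L^i}$ carries a negligible fraction of the total weight with high probability.

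Under $\rho \geq (1+\eta)\log m/(8m^2)$, one has $p \geq (1+\eta')\log(2m)/(2m)$ for some $\eta' > 0$, so Theorem \ref{thm:hoffmanetal} applies to $\widetilde{L^i}$ and yields $\|A_{\widetilde{L^i}}^0\| \leq \sqrt{C'/(2mp)} = O(1/\sqrt{\rho m^2})$ with probability $1-o(1)$ (modulo a routine verification that the proofs of \cite{hoffmanetal,benaychbordenaveknowles} remain valid under the mildly non-uniform edge probabilities). The excess $E^i$ is then absorbed via Lemma \ref{lem:spectral_gap_under_perturbation1}: with high probability, $d_{E^i} = o(d_{\widetilde{L^i}})$ pointwise, which transfers the spectral bound with only an additive $O(1/\sqrt{\rho m^2})$ loss, giving $\|A_{L^i}^0\| = O(1/\sqrt{\rho m^2})$.

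It remains to combine $L^1$, $L^2$, $L^3$ into $L$ by applying Proposition \ref{prp:spectral_gap_under_perturbation2} twice. This requires bounding, for each $i$, the $\ell^1$-deviation $\sum_s |d_{L^i}(s) - \bar d_{L^i}|/(2m\,\bar d_{L^i})$. Since the degrees $d_{L^i}(s)$ are sums of independent Bernoullis with parameter $\rho$ over $\Theta(m^2)$ indices, the exponential-moment argument of Lemma \ref{lem:average_concentration} carries over and yields the required $O(1/\sqrt{\rho m^2})$ bound with probability $1 - e^{-m/c}$. Combined with the spectral estimates above, Proposition \ref{prp:spectral_gap_under_perturbation2} then delivers $\|A_L^0\| \leq \sqrt{C/(\rho m^2)}$ with the claimed probability. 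The main obstacle lies in the second paragraph: one has to verify that the mild non-uniformity of $\widetilde{L^i}$ and the weighted-multiplicity issue do not spoil the applicability of the Erd\H{o}s--R\'enyi estimates from Section \ref{sec:ergraphs}. Once that comparison is pinned down, the remainder of the proof is a routine assembly of those results.
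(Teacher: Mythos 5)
Your proof follows essentially the same route as the paper: decompose $L = L^1 \cup L^2 \cup L^3$, reduce each $L^i$ to an Erd\H{o}s--R\'enyi graph, apply Theorem \ref{thm:hoffmanetal}, and then glue the three bounds together with Proposition \ref{prp:spectral_gap_under_perturbation2} and the $\ell^1$-degree concentration of Lemma \ref{lem:average_concentration}. The one genuine gap is the step you yourself flag as ``a routine verification'': after truncating multiplicities to $1$, the graph $\widetilde{L^i}$ has inhomogeneous edge probabilities, namely $1-(1-\rho)^{4m-2}$ on the $m$ pairs $\{s,s^{-1}\}$ and $1-(1-\rho)^{4m-4}$ elsewhere, so it is \emph{not} an Erd\H{o}s--R\'enyi graph and Theorem \ref{thm:hoffmanetal} does not apply to it as stated. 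Re-running the spectral radius arguments of \cite{hoffmanetal,benaychbordenaveknowles} under inhomogeneous edge probabilities is not routine. The paper circumvents this with a short trick that you should adopt: \emph{before} truncating, inflate $L^i$ to $L'_i$ by adding an independent $B(2,\rho)$ weight to every pair $\{s,t\}$ with $s\neq t^{-1}$, so that all multiplicities become iid $B(4m-2,\rho)$; truncating $L'_i$ then produces a genuine Erd\H{o}s--R\'enyi graph $L''_i=\mathbb{G}(2m,\rho')$ to which Theorem \ref{thm:hoffmanetal} and Lemma \ref{lem:average_concentration} apply verbatim. The inflation perturbs each degree by at most $2$ with probability $1-O(m^{-0.2})$, and is absorbed together with the truncation excess in a single application of Lemma \ref{lem:spectral_gap_under_perturbation1} comparing $L$ with $L'' := L''_1\cup L''_2\cup L''_3$. (Alternatively, you could write $\widetilde{L^i}$ as $\mathbb{G}(2m,1-(1-\rho)^{4m-4})$ together with an independent sprinkling of at most one extra edge per vertex on the pairs $\{s,s^{-1}\}$, whose effect on the spectrum is $O(1/\log m)$ by Lemma \ref{lem:spectral_gap_under_perturbation1}, hence negligible; but this has to be said explicitly.)

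A secondary, fixable issue concerns the order of operations. You apply Lemma \ref{lem:spectral_gap_under_perturbation1} to pass from $\widetilde{L^i}$ to $L^i$ and then feed the weighted graphs $L^i$ into Proposition \ref{prp:spectral_gap_under_perturbation2}, which requires the $\ell^1$-degree concentration for $L^i$ itself. Lemma \ref{lem:average_concentration} is proved for simple Erd\H{o}s--R\'enyi graphs, and although the exponential moment argument does adapt to the binomial weights of $L^i$ (as you assert), this re-derivation is avoidable: the paper first combines the clean graphs $L''_i$ via Proposition \ref{prp:spectral_gap_under_perturbation2}, applying Lemma \ref{lem:average_concentration} only where its hypotheses hold, and passes from $L''$ to $L$ by Lemma \ref{lem:spectral_gap_under_perturbation1} as the final step.
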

The proof of Proposition \ref{prp:links} partly follows the strategy of \cite{MR3305311}. However, we need some adaptations because we want to deal with a more general range of $\rho$ where the degree sequence in the links is not well concentrated (see Lemma \ref{lem:spectral_gap_under_perturbation1}), and we aim for a stronger conclusion.
\begin{proof}
It is explained in \cite{MR3305311} that each $L^i$ (as defined above) is essentially an Erd\H{o}s-R\'enyi graph $\mathbb{G}(2m,\rho')$ with $\rho'=1-(1-\rho)^{4m-2}$.

Let us recall how the reduction to the Erd\H{o}s-R\'enyi model works. It follows from the definition that each $L^i$ is a graph with vertex set $S \cup S^{-1}$, in which the weights $\omega_i(s,t)$ (for a set $\{s,t\}$ of vertices of cardinality $2$) are independent and with binomial distribution $B(4m-4,\rho)$ if $s \neq t^{-1}$ and $B(4m-2,\rho)$ otherwise. Of course, the weights $\omega_1$, $\omega_2$ and $\omega_3$ are not independent.

Let $L_i'$ be the graph obtained from $L^i$ by adding, independently, to every edge of the form $\{s,t\}$ with $s \neq t^{-1}$ a binomial $B(2,\rho)$, so that the weights $\omega'_i(s,t)$ are independent and identically distributed binomial random variables $B(4m-2,\rho)$. The total number of edges that are added in this way to each vertex $s \in S\cup S^{-1}$ is a sum of $2m-2$ independent binomial $B(2,\rho)$ variables. Hence, it is a $B(4m-4,\rho)$ random variable. Since the probability that a binomial $B(N,\rho)$ is strictly greater than $2$ is less than $N^3 \rho^3$, we have that with probability $\geq 1-2m (4m-4)^3 \rho^3 \geq 1-O(m^{-0.2})$, to each $s \in S$ we have added at most $2$ edges.

Let $L''_i$ be the graph obtained from $L_i'$ by replacing all multiple edges by simple edges, i.e.~by replacing $\omega'_i$ by $\omega''_i(s,t)=\min(\omega'_i(s,t),1)$. The graph $L''_i$ is an Erd\H{o}s-R\'enyi graph $\mathbb{G}(2m,\rho')$, where $\rho'=1-(1-\rho)^{4m-2}$, and $\rho' \sim 4m \rho \geq (1+\frac{\eta}{2}) \frac{\log m}{2m}$ for $m$ sufficiently large. We make the following claims:
\begin{enumerate}[(i)]
 \item\label{item:union_of_ErdosRenyi} $L'':=L''_1\cup L''_2 \cup L''_3$ satisfies the conclusion of the proposition for a certain constant $C' > 0$.
 \item\label{item:degree_of_ErdosRenyi} There is a constant $C''>0$ such that with high probability, we have $d_{\omega''}(s) \in [C''^{-1}m^2 \rho,C''m^2 \rho]$ for every $s \in S \cup S^{-1}$.
 \item\label{item:degree_added} There exists a constant $C''' > 0$ such that with probability $1-o(1)$, we have $\sup_s d_{\omega'-\omega''}(s) \leq C''' \sqrt{m^2\rho}$.
\end{enumerate}
These claims imply the proposition, possibly by replacing the constants. Indeed, by (\ref{item:degree_of_ErdosRenyi}) and (\ref{item:degree_added}) we see that, with high probability, $L''$ is obtained from $L$ by adding and then removing edges in a way that changes the degree of every vertex by a factor in $\left[1-O\left(\frac{1}{\sqrt{m^2 \rho}}\right),1+O\left(\frac{1}{\sqrt{m^2 \rho}}\right)\right]$. Lemma \ref{lem:spectral_gap_under_perturbation1} therefore implies that, with high probability,
\[
  \|A_L^0\| \leq \|A_{L''}^0\|+ \frac{C'}{\sqrt{m^2 \rho}}.
\]
By \eqref{item:union_of_ErdosRenyi} we obtain the conclusion.

Let us prove the claims.

(\ref{item:union_of_ErdosRenyi}): It follows from Theorem \ref{thm:hoffmanetal} that there is a constant $C' > 0$ such that if $\rho' \geq \frac{(1+\eta) \log (N)}{N}$, then a graph in $\mathbb G(N,\rho')$ satisfies $\|A^0\| \leq \sqrt{\frac{8C'}{N\rho'}}$ with high probability. Here we have $N=2m$ and $\rho'= 1-(1-\rho)^{4m-4} \simeq 4m \rho \geq \frac{(1+\eta)\log m}{2m}$, so we get that, with high probability, $\|A_{L''_i}^0\|\leq \sqrt{\frac{C'}{m^2 \rho}}$. One concludes by Lemma \ref{lem:average_concentration} and Proposition \ref{prp:spectral_gap_under_perturbation2}.

(\ref{item:degree_of_ErdosRenyi}): This is Lemma \ref{lem:concentration_degree}.

(\ref{item:degree_added}): Denote by $\varepsilon_i(s,t)$ the random variable equal to $1$ if $(\omega'_i-\omega''_i)(s,t)>0$ and equal to $0$ otherwise. These are independent and identically distributed Bernoulli variables with expectation $O(m^2 \rho^2)$. Since the probability that $\omega'_i(s,t)$ is strictly greater than $4$ is $O((\rho m)^5) = O(m^{-2.1})$, we obtain that with probability $1-o(1)$, we have $\max_{s,t} \omega'_i(s,t) \leq 4$, i.e. $\omega'_i-\omega''_i \leq 4 \varepsilon_i$. So it is enough to prove that
\begin{equation}\label{eq:concentation_binomial} \sup_s \sum_t \varepsilon_i(s,t) \leq \frac{\sqrt{m^2\rho}}{4}\end{equation}
with probability $1-o(1)$. But for each $s$, $\sum_t \varepsilon_i(s,t)$ is a binomial random variable with expectation $O(m^3 \rho^2)$. Moreover by the assumption that $\rho \leq m^{-1.42}$,  $m^3 \rho^2= o(\sqrt{m^2 \rho})$, so \eqref{eq:concentation_binomial} follows from the standard concentration phenomenon for binomial random variables.
\end{proof}

\end{document}